\definecolor{darkblue}{rgb}{0.0,0,0.7} % darkblue color
\tikzset{
	edge/.style={->,> = latex'}
}
\tikzset{
	edge/.style={->,> = latex'}
}
\newcommand{\T}{\mathcal{T}_{\symm_n}}
\newcommand{\Tq}{\mathcal{T}_{\symm_n}(q)}
\newcommand{\Tqx}{\mathcal{T}_{\symm_n}(q,{\sf x})}
\newcommand{\Tx}{\mathcal{T}_{\symm_n}({\sf x})}
\newcommand{\X}{\mathcal{X}_{\symm_n}}
\newcommand{\Psiqx}{\Psi(q,\x)}
\newcommand{\Psix}{\Psi(\x)}
\newcommand{\x}{{\sf x}}
\newcommand{\y}{{\sf y}}
\newcommand{\xo}{\overline{x}}
\newcommand{\yo}{\overline{y}}
\newcommand{\xbar}{\overline{{\sf x}}}
\newcommand{\ybar}{\overline{{\sf y}}}
\renewcommand{\P}{\mathbb{P}}
\newcommand{\E}{\mathbb{E}}
\newcommand{\tTq}{\widetilde{\mathcal{T}}_{G/B}(q)}
\newcommand{\tTqx}{\widetilde{\mathcal{T}}_{G/B}(q,{\sf x})}
\newcommand{\hatTqx}{\mathcal{T}_{G/B}(q,\x)}
\newcommand{\hatTq}{\mathcal{T}_{G/B}(q)}
\newcommand{\m}{{\sf m }}
\newcommand{\bv}{{\sf b }}
\newcommand{\av}{{\sf a }}
\newcommand{\wordT}{\mathcal{T}_{W_{\m}}}
\newcommand{\wordTx}{\mathcal{T}_{W_{\m}}(\xbar)}
\newcommand{\wordTqx}{\mathcal{T}_{W_{\m}}(q,\xbar)}
\newcommand{\wordX}{\mathcal{X}_{W_{\m}}}
\newcommand{\bB}{B^-}
\newcommand{\std}{{\sf std}}
\newcommand{\destd}{{\sf destd}}
\newcommand{\HH}{\mathcal{H}}
\newcommand{\symm}{\mathfrak{S}}
\newcommand{\C}{\mathbb{C}}
\newcommand{\F}{\mathbb{F}}
\newcommand{\GL}{\mathsf{GL}}
\newtheorem{theorem}{Theorem}[section]
\newtheorem{lemma}[theorem]{Lemma}
\newtheorem{prop}[theorem]{Proposition}
\newtheorem{cor}[theorem]{Corollary}
\theoremstyle{definition}
\newtheorem{defn}[theorem]{Definition}
\newtheorem{example}[theorem]{Example}
\newtheorem{remark}[theorem]{Remark}
\numberwithin{equation}{section}
\crefname{lemma}{Lemma}{Lemmas}
\crefname{cor}{Corollary}{Corollaries}
\crefname{defn}{Definition}{Definitions}
\crefname{prop}{Proposition}{Propositions}
\DeclareMathOperator{\inv}{inv}
\DeclareMathOperator{\proj}{proj}
\DeclareMathOperator{\incl}{incl}
\DeclareMathOperator{\projs}{p}
\DeclareMathOperator{\incls}{\iota}
\DeclareMathOperator{\coinv}{coinv}
\DeclareMathOperator{\LRM}{LRM}
\title{$q$-deformations of the Tsetlin library}
\author[A.\ Ayyer]{Arvind Ayyer}
\address[A.\ Ayyer]{Department of Mathematics, Indian Institute of Science, Bangalore 560012, India}
\email{arvind@iisc.ac.in}
\urladdr{\href{https://math.iisc.ac.in/~arvind/}{https://math.iisc.ac.in/~arvind/}}
\author[S.\ Brauner]{Sarah Brauner}
\address[S.\ Brauner]{Division of Applied Mathematics, Brown University, Providence, RI, USA}
\email{sarahbrauner@gmail.com}
\urladdr{\href{https://www.sarahbrauner.com/}{https://www.sarahbrauner.com/}}
\author[J.\ de Gier]{Jan de Gier}
\address[J.\ de Gier]{School of Mathematics and Statistics, University of Melbourne, Victoria 3010, Australia}
\email{jdgier@unimelb.edu.au}
\urladdr{\href{https://blogs.unimelb.edu.au/jan-de-gier/}{https://blogs.unimelb.edu.au/jan-de-gier/}}
\author[A.\ Schilling]{Anne Schilling}
\address[A.\ Schilling]{Department of Mathematics, University of California, One Shields
Avenue, Davis, CA 95616-8633, U.S.A.}
\email{aschilling@ucdavis.edu}
\urladdr{\href{http://www.math.ucdavis.edu/~anne}{http://www.math.ucdavis.edu/~anne}}
\begin{document}

\begin{abstract}
The Tsetlin library is a random shuffling process on permutations of $n$ letters, where each letter $i$ can be 
interpreted as a book; book $i$ is brought to the front of the bookshelf with an assigned probability $x_i$. 
We define a $q$-deformation of the Tsetlin library by replacing the symmetric group action on permutations 
by the action of the type $A$ Iwahori-Hecke algebra. We compute the stationary distribution and 
spectrum of this Markov chain by relating it to a Markov chain on complete flags over the finite field vector 
space $\mathbb{F}_q^n$ and applying techniques from semigroup theory. 
We prove that for a natural choice of $x_i$ the total variation distance mixing time of the $q$-Tsetlin library on 
permutations of $n$ is $O(n)$ compared to $\Theta(n \log n)$ for the Tsetlin library at $q=1$, which demonstrates a phase transition.
We also generalize the $q$-Tsetlin library to words (with repeated letters), and compute its stationary distribution and spectrum.
\end{abstract}

\keywords{Tsetlin library, Markov chain, Hecke algebra, stationary distribution, eigenvalues, flags, words, derangement numbers, mixing time}
\subjclass[2020]{60J10,05A05,05A15,20M30}

\maketitle
\tableofcontents

%%%%%%%%%%%%%%%%%%%%%%%%%%%%%%%%%%%%%%%%%%%%%%%%%%%%%%%%%%
\section{Introduction}
%%%%%%%%%%%%%%%%%%%%%%%%%%%%%%%%%%%%%%%%%%%%%%%%%%%%%%%%%%

In recent years, the combinatorial theory of Markov chains has emerged as an important area with deep connections to fields 
beyond combinatorics and probability, including representation theory~\cite{diaconis-1988} and geometry~\cite{BD.1998,DiaconisMorton.2025}. 
A central class of examples comes from card-shuffling~\cite{DiaconisFulman.2023}, that is, Markov chains on the symmetric group.

A classical but simplistic shuffling algorithm is the \textit{random-to-top shuffle}, in which a randomly chosen card is moved to the top 
of the deck at each step with uniform probability.
The \emph{Tsetlin library}~\cite{Tsetlin.1963} generalizes this shuffle by letting the probability of the card being picked depend on the card itself. 

The process can be described as follows. 
Imagine $n$ books arranged linearly on a shelf. At each step, a book is selected according to a probability distribution reflecting its popularity, 
checked out, and then returned to the front of the shelf. Over time, the most popular books tend to accumulate near the front of the collection,
yielding the stationary distribution.

More than sixty years after its introduction, the Tsetlin library continues to inspire mathematical and scientific innovation.
For instance, this process has numerous applications in fields such as computer
science~\cite{BSTW.1986,Donnelly.1991,Fill.1996a}, where it is sometimes known as \emph{dynamic file maintenance} or \emph{cache maintenance}, 
as well as genetics, where its stationary distribution is known as the GEM (Griffiths--Engen--McCloskey) distribution \cite{Donnelly.1991}. 
More recently, it has spurred far-reaching generalizations to structures such as hyperplane arrangements~\cite{bidigare_hanlon_rockmore.1999,BD.1998}, 
linear extensions of posets~\cite{AyyerKleeSchilling.2014}, and many more, see e.g. \cite{CHATTERJEE2024139,Fill.1996} and the references therein. 
The study of the Tsetlin library has also led to an understanding of the 
stationary distribution of Markov chains arising from $\mathscr{R}$-trivial monoids~\cite{ASST.2015},
and to general finite state Markov chains~\cite{RhodesSchilling.2019}.
It has further sparked new methods for computing the rate of convergence of Markov chains 
to the stationary distribution, starting with the work of Brown and Diaconis~\cite{BD.1998,Brown}
and Bidigare--Hanlon--Rockmore~\cite{bidigare_hanlon_rockmore.1999},
and generalized to arbitrary finite state Markov chains by Rhodes and the fourth author~\cite{RhodesSchilling.2022}.

In this paper, we define several novel $q$-deformations of the Tsetlin library, analyze their stationary distributions and spectra, and 
compute their mixing times. 
We do so by lifting the Tsetlin library to the \emph{type $A$ Iwahori Hecke algebra} $\HH_n(q)$.
The parameter $q$ can be understood as encoding a probability, thereby enriching the original Tsetlin library. We show that for a natural choice of 
probabilities and weights, the total variation distance mixing time of our $q$-Tsetlin library on permutations of $n$ for $q>1$ is $O(n)$,
compared to the known results $\Theta(n \log n)$ for the Tsetlin library at $q=1$~\cite{diaconisICM,Diaconis.1996,Fill.1996,Nestoridi.2019}. 
Our work contributes to the growing body of work on Markov chains built using the Hecke 
algebra \cite{qr2r,qr2r2,diaconis2000analysis}, which can be used to model many interacting particle systems, including the asymmetric simple exclusion 
process (ASEP), see e.g. \cite{Cantini_2015,10.1093/imrn/rny159,bufetov-2020}, as well as Markov chains on $\GL_n(\F_q)$,  
see e.g.~\cite{diaconis2023double,DiaconisMorton.2025}.
We note in passing that recently another model called the $p$-Tsetlin library has been studied on the group of colored permutations 
with $p$ colors~\cite{nakagawa-nakano-2025}, and is unrelated to this work.

We begin with a formal definition of the Tsetlin library.

%%%%%%%%%%%%%%%%%%%%%%%%%%%%%%%%%%%%%%%%%%%%%%%%%%%%%%%%%%
\subsection{Definition of the Tsetlin library} 
The Tsetlin library can be defined formally using the symmetric group $\symm_n$ and its corresponding group algebra $\C \symm_n$. Recall that 
$\symm_n$ is generated by adjacent transpositions $s_i$ that interchange the letters $i$ and $i+1$. We first consider the \emph{random-to-top shuffle}, 
defined as the element 
\begin{equation} \label{eq:defR2Tintro}
	\T := \frac1n \sum_{i=1}^{n} s_{i-1}\cdots s_1, 
\end{equation}
that acts on a permutation $\pi \in \symm_n$ by right multiplication (i.e. on the positions of the letters in $\pi$); by convention, we interpret the $i=1$ 
term in the summand as the identity. Specifically, writing $\pi=\pi_1 \,\ldots \,\pi_n \in \symm_n$ in one-line notation, the action of $\T$ on $\pi$ is given by
\begin{equation}
\label{eq:R2T}
	\pi \cdot \T = \frac1n \sum_{i=1}^n \pi_i \, \pi_1 \ldots \, \hat{\pi}_i \, \ldots \,\pi_n,
\end{equation}
where the hat indicates that the letter is omitted.
In other words, the operator $\T$ moves the letters of $\pi$ to the front in all possible ways. 

The coefficient of each term $\sigma$ in the right hand side of \eqref{eq:R2T} is the probability of obtaining $\sigma$ from $\pi$ 
after one iteration of $\T$.
The \emph{Tsetlin library} is defined by introducing probabilities $\x = (x_1, \ldots, x_n)$ to each $i \in [n]:=\{ 1, \ldots, n\}$ 
satisfying $x_1+\cdots+x_n=1$ via an operator $\X(\x)$ 
that acts on $\pi$ by
\begin{equation}
	\pi\cdot \X(\x) := x_{\pi_1} \pi.
\label{eq:X}
\end{equation}
The Tsetlin library $\Tx$ is then defined as the composition (acting on the right)
\begin{equation}
\label{equation.Tx}
	\Tx= \T \circ \X(\x) ,
\end{equation}
which acts on $\pi$ as
\begin{equation}
\pi \cdot \Tx = \sum_{i=1}^n x_{\pi_i} (\pi_i \, \pi_1 \ldots \, \hat{\pi}_i \, \ldots \, \pi_n).
\end{equation}
In other words, $\Tx$ brings the letter $\pi_i$ to the front of the permutation with probability $x_{\pi_i}$. For example,
\[ 
	123 \cdot \Tx = x_{1} 123 + x_2 213 + x_3 312. 
\]
Viewing $\x$ as a set of formal variables, this action can be extended by linearity to an action on 
$\mathbb{C}(\x) \symm_n$, where $\mathbb{C}(\x) = \mathbb{C}(x_1,\ldots,x_n)$ 
is the fraction field of rational functions in the variables $x_1,\ldots,x_n$.

The Tsetlin library is very well known in part because of the elegance of its spectral properties. 
The \emph{stationary distribution} $\Psix \in \mathbb{C}(\x) \symm_n$ is the left eigenvector of $\Tx$
with eigenvalue $x_1+\cdots+x_n$. When we interpret the $x_i$ as probabilities satisfying the condition $0\leqslant x_i \leqslant 1$ with
$x_1+\cdots+x_n=1$, the stationary state is the left eigenvector of eigenvalue one, that is, $\Psix \cdot \Tx = \Psix$.

Writing $\Psix = \sum_\pi \pi\Psi(\x)_\pi$, Tsetlin \cite{Tsetlin.1963} proved (see also \cite{Hendricks.1972, Donnelly.1991}) that the components of the stationary distribution are given by
\begin{equation}
\label{equation.Psi}
	\Psi(\x)_\pi = \prod_{i=1}^n \frac{x_{\pi_i}}{1-\sum_{j=1}^{i-1}x_{\pi_j}}.
\end{equation}
 
The eigenvalues of $\Tx$ also admit a simple and beautiful description. In particular, independent work of Donnelly \cite{Donnelly.1991}, 
Kapoor--Reingold \cite{kapoor1991stochastic}, and Phatarfod \cite{Phatarfod.1991} show that the eigenvalues of $\Tx$ are of the form 
\begin{equation} \lambda_S(\x):= \sum_{i \in S} x_i, \label{eq:tsetlineigen}
\end{equation} 
for every subset $S \subseteq [n].$ Moreover, the multiplicity of $\lambda_S(\x)$ is given by $d_{n-|S|}$, where 
\begin{equation}
\label{derange}
d_n:= n! \sum_{k=0}^n \frac{(-1)^k}{k!} 
\end{equation}
is the $n$-th \emph{derangement number} counting permutations of $\symm_n$ with no fixed points. The \textit{mixing time} of the Tsetlin library, which measures how quickly the Markov chain converges to stationarity, was computed to be
$\Theta(n \log n)$~\cite{diaconisICM,Fill.1996,Nestoridi.2019}.

In what follows, we will obtain generalizations of each of equations \eqref{equation.Psi} and \eqref{eq:tsetlineigen}, and recover the multiplicity 
count in~\eqref{derange}. 

%%%%%%%%%%%%%%%%%%%%%%%%%%%%%%%%%%%%%%%%%%%%%%%%%%%%%%%%%%%
\subsection{$q$-deformations of the Tsetlin library}
Here, we define several $q$-deformations of the Tsetlin library by generalizing the process in \eqref{equation.Tx} to incorporate a probability 
$q^{-1} \in (0,1]$ and defining its action on
\begin{enumerate}
    \item the module over flags $\C[G/B]$, where $G = \GL_n(\F_q)$ for $\F_q$ a finite field, and $B$ is its subgroup of upper triangular matrices;
    \item the symmetric group algebra $\C\symm_n$; and 
    \item the space of words $\C W_\m$ of content $\m$, where $\m = (m_1, \ldots, m_\ell)$ is a composition of $n$.
\end{enumerate}
In all cases, we compute the stationary distribution and characteristic polynomial obtained from this process, generalizing \eqref{equation.Psi} 
and~\eqref{eq:tsetlineigen}. We write $\hatTqx$, $\Tqx$ and $\wordTqx$ to refer to the $q$-Tsetlin library acting on the flag variety, the symmetric 
group algebra, and the space of words, respectively. Here, $\xbar$ is a transformation of the probabilities $\x$, defined in \eqref{equation.rates xbar x}.

Our $q$-deformation of $\Tx$ is achieved by working in the \emph{type $A$ Iwahori-Hecke algebra} $\HH_n(q)$ and defining an appropriate 
operator $\mathcal{X}$ generalizing \eqref{eq:X}. Our analysis builds on work of \cite{Brown,BraunerComminsReiner} which defines $q$-deformations 
of the \emph{random-to-top} operator $\T$ in \eqref{eq:defR2Tintro} in the context of the flag variety $\C[G/B]$, and \cite{qr2r,Lusztig03} who define an 
analogue of $\T$ in $\HH_n(q)$ called \emph{$q$-random-to-top} $\Tq$. See \cref{figure.notation} for a summary how our $q$-Tsetlin library 
generalizes these known cases when acting on $\C \symm_n$. \begin{figure}[!h]
\begin{center}
\begin{tikzpicture}[auto]
  % Nodes
  \node(AT) at (0,0.7) {\emph{$q$-Tsetlin library}};
  \node (A) at (0,0) {$\Tqx$};
  \node(BT) at (6,0.7) {\emph{$q$-random-to-top}};
  \node (B) at (6,0) {$\Tq$};
  \node (C) at (0,-3) {$\Tx$};
  \node (CT) at (0,-3.7) {\emph{Tsetlin library}};
  \node (D) at (6,-3) {$\T$};
  \node (DT) at (6,-3.7) {\emph{random-to-top}};

  % Arrows
  \draw[->] (A) -- (B) node[midway,above] {$x_i = q^{n-i}/[n]_q$};
  \draw[->] (A) -- (C) node[midway,left] {$q=1$};
  \draw[->] (B) -- (D) node[midway,right] {$q=1$};
  \draw[->] (C) -- (D) node[midway,below] {$x_i=1/n$};
\end{tikzpicture}
\end{center}
\caption{The relationship between the $q$-Tsetlin library on permutations and its specializations to $q$-random-to-top and the classical Tsetlin library. 
\label{figure.notation}}
\end{figure}

The novelty of our work is to introduce the \emph{additional probabilities} $x_1,\ldots,x_n$ to the $q$-deformation $\Tq$. This enriches the original process, 
but significantly complicates its stationary and spectral properties, as well as the methods needed to analyze them. As we will show, probabilities $x_i$ can be chosen such that the mixing time is faster than the original Tsetlin library.

%%%%%%%%%%%%%%%%%%%%%%%%%%%%%%%%%%%%%%%%%%%%%%%%%%%%%%%%%%%
\subsection{Our methods}
While our work generalizes both the $(q=1)$ Tsetlin library and $q$-random-to-top, the methods used to understand these processes 
do not immediately extend to the $q$-Tsetlin library. 

In particular, in ground-breaking work, 
Bidigare--Hanlon--Rockmore~\cite{bidigare_hanlon_rockmore.1999} and Brown \cite{Brown} showed that $\Tx$ can be realized as an element of a certain type of 
semigroup called a \emph{left regular band}, and determined a method of computing the characteristic polynomial of any Markov process arising in 
this way. Modern techniques from semigroup theory enable one to compute the stationary distribution $\Psi(\x)$ for $\Tx$ by computing the 
Karnofsky--Rhodes and McCammond expansion of the right Cayley graph of the corresponding left regular band, 
see~\cite{AyyerKleeSchilling.2014, ASST.2015, RhodesSchilling.2019}, and mixing times~\cite{RhodesSchilling.2022}. 
(These methods work more generally for  $\mathscr{R}$-trivial monoids.)

On the other hand, once one moves to $\HH_n(q)$, it is no longer obvious how the processes of interest can be formulated as a left regular band. 
In the case of $\Tq$, the analysis in \cite{qr2r} links the Markov chain with a left regular band defined in $\C[G/B]$ introduced by Brown~\cite{Brown} 
called the \emph{$q$-free left regular band} $\mathcal{F}_n(q)$. However, the techniques in \cite{qr2r} 
rely heavily on representation theory; once rates $\x$ are introduced 
in the process, ``symmetry'' is broken, and one can no longer utilize the same powerful techniques. 

To combat this obstacle, we proceed as follows. As in the case of $\Tq$, we link the $q$-Tsetlin library on flags $\hatTqx$ to the $q$-free left regular 
band $\mathcal{F}_n(q)$. Instead of using representation theory to understand $\Tqx$ and $\wordTqx$, we instead define appropriate inclusion and 
projection maps, and prove that the following diagrams commute:
\begin{figure}[h!]
\begin{tikzcd}[column sep=huge, row sep=huge]
\C(q,\x)[G/B] \arrow[d, "\projs"'] \arrow[r, "\hatTqx"] & \C(q,\x)[G/B] \arrow[d, "\projs"']   \\
\arrow[u, hook, shift right=1.5ex, "\incls"'] \C(q,\x)[\symm_n] \arrow[r, "\Tqx"] \arrow[d, "\projs"'] \arrow[u, hook, shift right=1.5ex, "\incls"'] &  \C(q,\x)[\symm_n] \arrow[d, "\projs"'] \arrow[u, hook, shift right=1.5ex, "\incls"'] \\
\C(q,\xbar)[W_{\m}] \arrow[r, "\wordTqx"] \arrow[u, hook, shift right=1.5ex, "\incls"'] &  \C(q,\xbar)[W_{\m}] \arrow[u, hook, shift right=1.5ex, "\incls"']
\end{tikzcd}
\caption{Commuting diagram relating $q$-Tsetlin libraries on flags (top), permutations (middle) and words (bottom).}
\label{figure.commuting diagram}
\end{figure}

The projection maps allow us to obtain the stationary distribution of $\Tqx$ and $\wordTqx$ by computing the stationary distribution of $\hatTqx$ 
using the left regular band methods in~\cite{ASST.2015, RhodesSchilling.2019}. Note that an explicit formula for the stationary distribution of 
$\hatTqx$ is new in and of itself. The inclusion maps then give rise to the spectra of these processes. Furthermore, we use methods developed in~\cite{RhodesSchilling.2022} to compute upper bounds on the mixing time.
  
%%%%%%%%%%%%%%%%%%%%%%%%%%%%%%%%%%%%%%%%%%%%%%%%%%%%%%%%%%%
\subsection{Outline of the paper}
The paper is organized as follows.
\begin{itemize}
    \item In \cref{section.results}, we define the three $q$-deformations of the Tsetlin library $\hatTqx, \Tqx$ and $\wordTqx$. 
    We state our main results, namely the stationary distribution, the spectrum for each of these three, and convergence to stationarity.
    \item In \cref{section.flags}, we define the inclusion and projection maps, and show that the diagram in \cref{figure.commuting diagram} 
    commutes. We also relate the $q$-Tsetlin library to the $q$-free left regular band.
    \item In \cref{section.eigenvalues}, we compute the characteristic polynomial for each case.
    \item In \cref{section.stationary distribution}, we derive the stationary distribution for each case.
    \item In \cref{section.convergence}, we analyze convergence to stationarity and proof the upper bounds on mixing times.
\end{itemize}

%%%%%%%%%%%%%%%%%%%%%%%%%%%%%%%%%%%%%%%%%%%%%%%%%%%%%%%%%%%
\subsection*{Acknowledgements}
We are extremely grateful to Darij Grinberg for communicating to us the proof of \cref{darij_lemma}, and other illuminating conversations.
We also wish to thank Pavel Galashin, Amritanshu Prasad and Arun Ram for valuable discussions. 
Furthermore, we wish to thank IPAM and ICERM for the stimulating research
atmospheres, where part of this work was done. Specifically, some of this material is based on work supported by the National Science Foundation, while 
the authors were in residence at IPAM at the semester program ``Geometry, Statistical Mechanics, and Integrability'' in Spring 2024, and
under NSF Grant No. DMS-1929284, while the authors
were in residence at the Institute for Computational and Experimental Research in Mathematics in Providence, RI, during the 
``Categorification and Computation in Algebraic Combinatorics'' semester program in Fall 2025.
Any opinions, findings, and conclusions or recommendations expressed in this material are those of the authors and do not necessarily 
reflect the views of the National Science Foundation. 

AA acknowledges support from the DST FIST Program 2021 TPN--700661. SB was partially supported by NSF MSPRF DMS--2303060.
JdG is supported by the Australian Research Council.
AS was partially supported by  NSF grant DMS--2053350 and Simons Foundation grant MPS-TSM--00007191. 

%%%%%%%%%%%%%%%%%%%%%%%%%%%%%%%%%%%%%%%%%%%%%%%%%%%%%%%%%%
\section{Definitions and statements of main results}
\label{section.results}
%%%%%%%%%%%%%%%%%%%%%%%%%%%%%%%%%%%%%%%%%%%%%%%%%%%%%%%%%%

In this section, we define the various $q$-deformations of the Tsetlin library and state our main results including
their stationary distributions, eigenvalues and multiplicities, and convergence to stationarity.

%%%%%%%%%%%%%%%%%%%%%%%%%%%%%%%%%%%%%%%%%%%%%%%%%%%%%%%%%%
\subsection{Definition of the $q$-Tsetlin library}
\label{section.qTsetlin}
To define the $q$-Tsetlin library, we first define the Hecke algebra.

\begin{defn}[Hecke algebra]
The \emph{type $A$ Iwahori Hecke algebra} $\mathcal{H}_n(q)$ is the $\C(q)$-algebra generated by $T_1,\ldots,T_{n-1}$ satisfying the following relations:
\begin{align}
T_iT_j = T_jT_i,\ & \mathrm{if}\ |i-j| > 1,\nonumber \\
T_iT_{i+1}T_i = T_{i+1} T_i T_{i+1},\ & \mathrm{for}\ 1 \leqslant i \leqslant n - 2, \\
(T_i+1)(T_i-q) = 0,\ & \mathrm{for}\ 1\leqslant i \leqslant n-1. \nonumber 
\end{align}
\end{defn}

\noindent The $q$-analogue of the random-to-top shuffle in $\HH_n(q)$, studied independently in \cite{Lusztig03} and \cite{qr2r}, is obtained from the right action of the element
\begin{equation}
\label{eq:TdefHecke}
\mathcal{T}(q) := \frac{1}{[n]_q} \sum_{i=1}^{n} T_{i-1}\cdots T_1,
\end{equation}
where
\begin{equation}
\label{equation.q integer}
	[n]_q = \frac{q^n-1}{q-1} =1 + q + \cdots + q^{n-1}
\end{equation}
is the $q$-analog of the integer $n$. The \emph{q-factorial} is $[m]_q! = [1]_q [2]_q \cdots [m]_q$.

Just as the Tsetlin library introduces rates as in~\eqref{equation.Tx} to enrich the random-to-top process, 
in this paper, we introduce additional rates $x_i$ to $\mathcal{T}(q)$ and an operator $ \mathcal{X}(q,\x)$, and write
\begin{equation}
\label{eq:TxdefHecke}
	\mathcal{T}(q,\x) := \sum_{i=1}^{n} \,  T_{i-1}\cdots T_1\, \mathcal{X}(q,\x).
\end{equation}
 Our three $q$-generalizations of the Tsetlin library are obtained from the action of $\mathcal{T}(q,\x)$ on 
 \begin{enumerate}
     \item permutations $\symm_n$ in \cref{section.qTsetlin on perms}, 
     \item words $W_\m$, i.e. permutations of sequences with repeated entries with multiplicities $\m=(m_1,\ldots,m_\ell)$ and $|\m|=n$ 
in \cref{section.qTsetlin on words}, and
\item  in \cref{section.qTsetlin on flags} on cosets $G/B$ where $G  = \GL_n(\F_q)$ and $B$ is the subgroup of upper triangular matrices in $G$, 
equivalent to an action on flags of the vector space $\F_q^n$.
 \end{enumerate}
We state our results on convergence to stationarity in \cref{section.mixing}.

In the context of permutations $\symm_n$, the relationship to the original Tsetlin library and random-to-top shuffle is depicted in \cref{figure.notation}. 
The operator $\mathcal{X}(q,\x)$ will be defined depending on these three settings. 

We show in~\cref{theorem.topcommutativediagram,theorem.bottomcommutativediagram} that with
 appropriate definitions of projectors $\projs$, inclusions $\incls$, and 
probabilities $\x$, the diagram in 
\cref{figure.commuting diagram} commutes. This allows us to prove that the eigenvalues and stationary distribution of the $q$-Tsetlin library on 
permutations and words can be computed using semigroup methods at the level of flags. We now summarize our main results.
  
%%%%%%%%%%%%%%%%%%%%%%%%%%%%%%%%%%%%%%%%%%%%%%%%%%%%%%%%%%
\subsection{$q$-Tsetlin library on permutations}
\label{section.qTsetlin on perms}
We first describe our definition of the Tsetlin library acting on permutations, and corresponding results on the stationary distribution and characteristic polynomial. 

The action of the Hecke generator $T_{i}$ on permutations $\pi\in \symm_n$ is given by
\begin{equation}
\label{eq:heckeonperm}
	\pi \cdot T_i := \begin{cases}
	q \, \pi s_i \quad & \text{if }\pi_{i+1} < \pi_i, \\
	\pi s_i + (q-1) \, \pi & \text{if }\pi_{i+1} > \pi_i.
\end{cases}
\end{equation}
Hence the action of $q^{-1}T_i$ acquires a probabilistic meaning when $q$ is real and
\begin{equation}
\label{equation.q restriction}
0 < q^{-1} \leqslant 1.
\end{equation}

We write $\Tq$ for the $q$-generalization of the random-to-top shuffle \eqref{eq:R2T}, i.e. when $\mathcal{T}(q)$ defined 
in \eqref{eq:TdefHecke} acts on permutations from the right. The operator $\Tq$ was studied from a representation theoretic perspective 
in~\cite{qr2r,BraunerComminsReiner}. 

\subsubsection{Definition of the $q$-Tsetlin library on permutations}
To obtain the $q$-Tsetlin library on permutations, we assign probabilities by defining $\X(q,\x)$ as
\begin{equation}
\label{defXq}
\pi \cdot \X(q,\x) :=  \frac{x_{\pi_1}}{q^{n-\pi_1}} \pi,
\end{equation}
where $x_1+\cdots + x_n=1$.
We then have $\Tqx$ defined as a right operator
\begin{equation}
\label{eq:Txdef}
\Tqx := \Tq \X(q,\x).
\end{equation} 
Our choice of convention to leave a power of $q$ explicit will become clear below, see \cref{remark.linemultiplicty}.
This clearly reduces to \eqref{eq:X} at $q=1$ and to \eqref{eq:TdefHecke} when setting $x_i = q^{n-i}/[n]_q$. 
We call the Markov chain obtained by the right action of $\Tqx$ on $\mathbb{C}(q,\x) \symm_n$ 
the \emph{$q$-Tsetlin library on permutations}.

\begin{example}
\label{ex:hecke n=3}
Let us consider the $q$-Tsetlin library $\Tqx$ for $n=3$. On the ordered basis of permutations 
$(123, 132, 213, 231, 312, 321)$
in $\mathbb{C}(q,x_1,x_2,x_3)\symm_3$, we can represent $\Tqx$ by the following matrix
\begin{equation}
\label{matrixn=3}
\Tqx = 
\begin{pmatrix}
 \frac{(q^2-q+1) x_1}{q^2} &  \frac{(q-1) x_1}{q^2} & x_2 & 0 & x_3 & 0 \\
 \frac{(q-1) x_1}{q}& \frac{x_1}{q} & x_2 & 0 & x_3 & 0 \\
x_1 & 0 & \frac{x_2}{q} &  \frac{(q-1)x_2}{q}  & 0 & x_3 \\
x_1 & 0 &  0& x_2& 0 & x_3 \\
0 & x_1 & 0 & x_2 & x_3 & 0 \\
0 & x_1 & 0 & x_2 & 0 & x_3
\end{pmatrix}\,.
\end{equation}
\end{example}

%%%%%%%%%%%%%%%%%%%%%%%%%%%%%%%%%%%%%%%%%%%%%%%%%%%%%%%%%%%
\subsubsection{Stationary distribution}
\label{section.stationary_perms}

Recall that a Markov chain is said to be \emph{irreducible} if there is a sequence of moves taking any configuration to any other configuration.
It is a standard fact~\cite{LevinPeresWilmer} that an irreducible Markov chain has a 
unique stationary distribution. Since the transitions of the $q$-Tsetlin library contain the transitions of the Tsetlin library, which is known to be irreducible, 
we immediately obtain that (for $q\geqslant 1$) the $q$-Tsetlin library is irreducible, and hence has a unique stationary distribution. From general theory again, the \emph{stationary distribution} of the $q$-Tsetlin library is the left eigenvector $\Psiqx$ satisfying
\begin{equation}
\label{equation.stationary}
	\Psiqx \cdot \Tqx = \left(\sum_{i=1}^n x_i\right) \Psiqx.
\end{equation}
To formulate the result for the stationary distribution in \cref{th:eigenvectorH}, we need the following two definitions.

\begin{defn}
Let $\bv = (b_1, \dots, b_k)$ be a weakly decreasing tuple of integers. Define the function $\kappa$ by setting
\begin{equation}
\label{eq:kappadef}
\kappa(\bv;\x) := \sum_{i=1}^{k} x_{b_i} q^{i + b_i-k-1}.
\end{equation}
In particular, $\kappa(b_1;\x) = x_{b_1} q^{b_1-1}$.
For the long permutation $w_0 = n\; n-1\; n-2\; \ldots\; 1$
(written in one-line notation) we have
\begin{equation}
\label{equation.kappa omega0}
\kappa(w_0;\x) = 
x_1 + x_2 + \cdots + x_n.
\end{equation}
Denote by $\bv_>$ the elements of the tuple $\bv$ ordered in weakly decreasing fashion. To move from a weakly decreasing tuple to an
arbitrary tuple of $k$ integers $\bv$, we define $\kappa(\bv;\x) := \kappa(\bv_>;\x)$.
\end{defn}
\begin{defn}
A \emph{left-to-right minimum} in a permutation $\pi\in \symm_n$ is an element $\pi_j$ such that $\pi_i > \pi_j$ for all $1 \leqslant i < j$. 
Let $\LRM(\pi)$ be the set of positions of left-to-right minima of $\pi$. 
\end{defn}
It is well-known that the number of permutations in $\symm_n$ with $k$ left-to-right minima is the \textit{unsigned Stirling number of the first kind}, $s(n,k)$.

\begin{example}
Suppose $\pi=5426314$. Then the left-to-right-minima are $5,4,2,1$ in positions $1,2,3,6$. Hence $\LRM(\pi)= \{1, 2, 3, 6\}$.
 \end{example}

Suppose $k \notin \LRM(\pi)$. 
Then there exist $i < k$ positions such that $\pi_i < \pi_k$. Write $p_k = p_k(\pi)$ to be the smallest such position.
 Finally, we define $\inv(\pi)$ to be the number of inversions of $\pi\in \symm_n$, that is, the number of pairs $1\leqslant i<j\leqslant n$ such that 
$\pi_i>\pi_j$.

\begin{theorem}
\label{th:eigenvectorH}
Let $q \geqslant 1$ be a real number, so that $q^{-1} \subset (0,1]$ is a probability. 
\begin{enumerate}
\item For $\pi \in \symm_n$, the component $\Psiqx_\pi$ of the stationary distribution of the $q$-Tsetlin library $\Tqx$ 
on permutations is given by
\begin{equation}
\label{eq:ssH}
\begin{split} 
\Psiqx_\pi =& 
\frac{ \displaystyle q^{-\inv(\pi)} }
{\displaystyle \prod_{k=1}^{n-1}  
\left(\kappa(w_0;\x)-q^{k-n-1} \kappa(\pi_1, \dots, \pi_{k-1};\x) \right)} \\
& \times
\left( \prod_{\substack{k=1 \\ k \in \LRM(\pi)}}^{n-1} \kappa(\pi_k;\x) \;
\prod_{\substack{k=1 \\ k \notin \LRM(\pi)}}^{n-1} 
(\kappa(\pi_{p_k}, \dots, \pi_k;\x) 
- q^{-1} \kappa(\pi_{p_k}, \dots, \pi_{k-1};\x) ) \right).
\end{split}
\end{equation}
\item Each factor of $\Psiqx_\pi$ is positive.
\item The entries of $\Psiqx$ sum to $1$.
\end{enumerate}
\end{theorem}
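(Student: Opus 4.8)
The plan is to verify the three assertions of \cref{th:eigenvectorH} in sequence, leaning on the commuting diagram of \cref{figure.commuting diagram} to transport the problem to the flag variety, where the left regular band machinery of \cite{ASST.2015, RhodesSchilling.2019} produces a product formula. First I would establish that the projection $\projs \colon \C(q,\x)[G/B] \to \C(q,\x)[\symm_n]$ intertwines $\hatTqx$ and $\Tqx$ (this is one of the commutativity statements referenced in the excerpt as \cref{theorem.topcommutativediagram}), so that the stationary distribution $\Psiqx$ is obtained by applying the adjoint/transpose of $\projs$ to the stationary distribution of $\hatTqx$. Because $\hatTqx$ is built from the $q$-free left regular band $\mathcal{F}_n(q)$, its stationary distribution decomposes as a product over a chain of ``faces'' — concretely, a telescoping product indexed by the prefixes $\pi_1, \dots, \pi_k$ of $\pi$. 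Pushing this product through $\projs$ and collecting terms is what should yield the two-line formula \eqref{eq:ssH}: the denominator $\prod_k(\kappa(w_0;\x) - q^{k-n-1}\kappa(\pi_1,\dots,\pi_{k-1};\x))$ records the ``exit rates'' at each stage, while the numerator splits according to whether position $k$ is a left-to-right minimum (a genuinely new smallest letter, contributing the single term $\kappa(\pi_k;\x)$) or not (in which case the relevant contribution is the difference $\kappa(\pi_{p_k},\dots,\pi_k;\x) - q^{-1}\kappa(\pi_{p_k},\dots,\pi_{k-1};\x)$, reflecting the Hecke-deformed branching at the first earlier position $p_k$ with $\pi_{p_k} < \pi_k$). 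The overall power $q^{-\inv(\pi)}$ is the accumulated weight from the $q$-action \eqref{eq:heckeonperm} along the shuffling path. A cleaner and more self-contained alternative, which I would pursue in parallel as a check, is to verify \eqref{eq:ssH} directly: plug the claimed $\Psiqx$ into \eqref{equation.stationary} and confirm the balance equations using the explicit matrix action read off from \eqref{eq:heckeonperm} and \eqref{defXq}, by induction on $n$ (peeling off the first letter $\pi_1$ and relating $\Psiqx_\pi$ for $\symm_n$ to the stationary distribution on $\symm_{n-1}$); the $n=3$ case \eqref{matrixn=3} serves as the base-case sanity check.

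For part (2), positivity of each factor, I would argue as follows. For $q \geq 1$ and $x_i \geq 0$ with $\sum x_i = 1$, each $\kappa(\bv;\x) = \sum_i x_{b_i} q^{i+b_i-k-1}$ is manifestly nonnegative, but the subtleties are the differences in the numerator and the denominator factors. For a denominator factor, I would show $\kappa(w_0;\x) - q^{k-n-1}\kappa(\pi_1,\dots,\pi_{k-1};\x) > 0$ by noting that $\kappa(w_0;\x) = x_1 + \cdots + x_n$ by \eqref{equation.kappa omega0}, while $q^{k-n-1}\kappa(\pi_1,\dots,\pi_{k-1};\x)$ is, after reordering the prefix into $\bv_>$ and using $i + b_i - k \le b_i$ together with the exponent shift $k-n-1$, bounded strictly above by $\sum_{j \in \{\pi_1,\dots,\pi_{k-1}\}} x_j < \sum_{i=1}^n x_i$ whenever at least one $x_i$ with $i \notin \{\pi_1,\dots,\pi_{k-1}\}$ is positive; I would need to handle the degenerate boundary cases (some $x_i = 0$) by a limiting or genericity argument, or simply observe that the statement is really about $\C(q,\x)$ and positivity is claimed on the open probability simplex. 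For a numerator difference with $k \notin \LRM(\pi)$, write $\kappa(\pi_{p_k},\dots,\pi_k;\x) - q^{-1}\kappa(\pi_{p_k},\dots,\pi_{k-1};\x)$; the key point is that $\pi_k$ is \emph{not} the minimum of the window $\{\pi_{p_k},\dots,\pi_k\}$ (since $\pi_{p_k} < \pi_k$ by definition of $p_k$), so when one reorders both tuples into weakly decreasing form the two $\kappa$'s share the same set of letters except that the longer one contains the extra letter $\pi_k$; a careful bookkeeping of how inserting $\pi_k$ shifts the exponents $i + b_i$ of the letters it displaces, combined with the factor $q^{-1} \le 1$, should show the difference equals $x_{\pi_k} q^{(\text{something}) } + (\text{nonnegative correction})$ and in particular is positive. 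This exponent-shift bookkeeping is where I expect to spend the most effort.

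Part (3), that the entries of $\Psiqx$ sum to $1$, follows once (1) and (2) are in hand together with the general Markov-chain fact that an irreducible chain has a \emph{unique} stationary distribution summing to $1$: since the $q$-Tsetlin library is irreducible for $q \ge 1$ (argued in the excerpt just before the theorem, as its transitions contain those of the classical Tsetlin library), it suffices to know that \emph{some} nonzero scalar multiple of the vector $\sum_\pi \pi\, \Psiqx_\pi$ is stationary — which is precisely \eqref{equation.stationary} — and that the stated components are the correctly normalized ones. Concretely I would show $\sum_{\pi \in \symm_n} \Psiqx_\pi = 1$ by induction on $n$: group permutations by their first letter $\pi_1 = a$, factor out the $k=1$ contributions $\kappa(\pi_1;\x) = x_a q^{a-1}$ from the numerator and the $k = n-1$ denominator-style normalization, and identify the remaining sum over the $(n-1)$-element suffix with the total mass of the $q$-Tsetlin stationary distribution on $\symm_{n-1}$ relative to the depleted rate vector (one checks the $\kappa$-functions restrict correctly under deleting the letter $a$ and shifting exponents). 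This reduces to a $q$-analogue of the classical telescoping identity $\sum_\pi \Psi(\x)_\pi = 1$ behind \eqref{equation.Psi}. The main obstacle throughout is the exponent bookkeeping in the $\kappa$-functions under prefix/suffix operations and reordering into $\bv_>$; getting those shifts exactly right — including the roles of the explicit powers of $q$ flagged in \cref{remark.linemultiplicty} — is the crux, and I would nail it down first on small cases ($n=2,3,4$) before attempting the general induction.
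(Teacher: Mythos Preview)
Your overall strategy for part (1) --- transport to the flag variety via the commuting diagram, compute the stationary distribution there using the left-regular-band machinery, and pull back via the lumping relation $\Psi(q,\x)_\pi = q^{\coinv(\pi)}\Psi(q,\x)_{F_\pi}$ --- is exactly what the paper does. The term-by-term matching of the product formula for $\Psi(q,\x)_{F_\pi}$ (\cref{theorem.sd flags}) against the $\kappa$-expression \eqref{eq:ssH} is the content of \cref{lemma.q prime power}, and your description of how the LRM/non-LRM dichotomy in the numerator arises is accurate.

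However, there is a genuine gap in your plan for part (1): the flag variety $G/B$ with $G=\GL_n(\F_q)$ only makes sense when $q$ is a prime power, so the lumping argument produces the formula \eqref{eq:ssH} only for those $q$. The theorem asserts it for all real $q\geqslant 1$. The paper closes this gap in \cref{lemma.all q}: the claimed $\zeta(q,\x)_\pi$ is rational in $q$, and so is $\Psi(q,\x)_\pi$ (e.g.\ by the Markov chain tree theorem); two rational functions agreeing at infinitely many points (all prime powers) must agree identically. Your ``alternative'' direct-verification-by-induction route would sidestep this issue, but it is not what the paper does and would require substantially more work.

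For part (3) your induction on $n$ is unnecessarily hard. The paper gets normalization almost for free: the flag stationary distribution sums to $1$ by the LRB construction (each $\Psi(q,\x)_F$ is a genuine probability, \cref{theorem.sd flags}), and lumping via \eqref{eq:flaglumping} preserves total mass, so $\sum_\pi \Psi(q,\x)_\pi = \sum_F \Psi(q,\x)_F = 1$ for prime-power $q$, hence for all $q$ by rationality again. Your part (2) sketch is close to the paper's argument; the key observation you are circling is that in the numerator difference the terms with $\pi_i > \pi_k$ cancel \emph{exactly} (not merely approximately), leaving only a single $x_{\pi_k}$-term plus strictly positive $(q-1)$-weighted terms from $\pi_i < \pi_k$.
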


\cref{th:eigenvectorH} will be proved in \cref{ss:ss permutations} by lumping the stationary distribution
of the $q$-Tsetlin library on flags that will be introduced in \cref{section.qTsetlin on flags}. A subtlety is that the $q$-Tsetlin
library on flags is only defined for $q$ prime powers whereas the $q$-Tsetlin library on permutations is defined for arbitrary $q$
satisfying~\eqref{equation.q restriction}.

\begin{cor}
\label{cor:stationaryspecial}
    For the special case $x_i=q^{n-i}/[n]_q$ the stationary distribution is given by
    \[
    \Psi_\pi = \frac{q^{n(n-1)/2}}{[n]_q!} q^{-\inv(\pi)}.
    \]
\end{cor}

%%%%%%%%%%%%%%%%%%%%%%%%%%%%%%%%%%%%%%%%%%%%%%%%%%%%%%%%%%%
\subsubsection{Eigenvalues}

Our next theorem states that the eigenvalues of $\Tqx$ are natural $q$-deformations of the corresponding eigenvalues of $\Tx$ with matching multiplicities. 

\begin{theorem}
\label{thm:evalues Sn}\label{thm:evalues Sn2}

The eigenvalues of the $q$-Tsetlin library $\Tqx$ in~\eqref{eq:Txdef} are given as follows. 
For every subset $S \subseteq [n]$, written as $S = \{i_1 > i_2 > \cdots > i_k\}$, there is an eigenvalue
\[
\lambda_S(q,\x) = \sum_{j=1}^k \frac{x_{i_j}}{q^{n-i_j-j+1}}
\]
which occurs with multiplicity $d_{n-k}$ given in \eqref{derange}. Moreover, $\Tqx$ is diagonalizable whenever the $\x$ are generic, i.e. $\lambda_T(q,\x) = \lambda_S(q,\x)$ implies that $S = T$.
\end{theorem}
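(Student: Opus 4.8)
The plan is to obtain the spectrum of $\Tqx$ from that of the $q$-Tsetlin library on flags $\hatTqx$, using the top square of \cref{figure.commuting diagram} together with the left regular band description of $\hatTqx$. By \cref{theorem.topcommutativediagram} we have $\hatTqx\circ\incls=\incls\circ\Tqx$, so $M:=\incls(\C(q,\x)[\symm_n])$ is $\hatTqx$-invariant, and via the injection $\incls$ the restriction $\hatTqx|_M$ is conjugate to $\Tqx$. In particular, for every prime power $q$ the characteristic polynomial $\det(t-\Tqx)$ divides $\det(t-\hatTqx)$. It therefore suffices to determine the eigenvalues of $\hatTqx$ and then descend to $M$ to pin down multiplicities.

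For the eigenvalues of $\hatTqx$ I would invoke the identification, carried out in \cref{section.flags}, of $\hatTqx$ with the transition operator of a random walk on the $q$-free left regular band $\mathcal{F}_n(q)$, whose support lattice is the subspace lattice $L_n(q)$ of $\F_q^n$ and whose chambers are the complete flags; the walk is driven by the probability measure on lines assigning to $\ell=\langle v\rangle$ the weight $x_j/q^{\,n-j}$, where $j$ is the least index with $v_j\neq 0$, i.e.\ the scalar by which $\mathcal{X}(q,\x)$ acts on a flag with bottom line $\ell$ (cf.\ \cref{remark.linemultiplicty}). The eigenvalue formula for left regular band random walks \cite{bidigare_hanlon_rockmore.1999,Brown} then produces, for each $V\in L_n(q)$, an eigenvalue $\lambda_V(q,\x)=\sum_{\ell\subseteq V}w(\ell)$. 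Passing to the reduced row-echelon form of $V$ and counting the lines of $V$ by leading coordinate shows that $\lambda_V$ depends only on the pivot set $S=S(V)\subseteq[n]$: writing $S=\{i_1>\cdots>i_k\}$, there are $q^{\,j-1}$ lines $\ell\subseteq V$ with $\mathrm{lead}(\ell)=i_j$, whence $\lambda_V=\sum_{j=1}^k q^{\,j-1}x_{i_j}/q^{\,n-i_j}=\sum_{j=1}^k x_{i_j}/q^{\,n-i_j-j+1}=\lambda_S(q,\x)$, exactly the formula in the statement. Since every subset of $[n]$ is the pivot set of a coordinate subspace, the eigenvalues of $\hatTqx$ are precisely the $\lambda_S(q,\x)$, $S\subseteq[n]$, and $\det(t-\hatTqx)$ is a product of the linear factors $t-\lambda_S(q,\x)$.

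To descend, note that $\det(t-\Tqx)\in\C(q,\x)[t]$ is monic of degree $n!$ and, for each prime power $q$, divides $\det(t-\hatTqx)$, hence equals $\prod_{S\subseteq[n]}(t-\lambda_S(q,\x))^{g_S(q)}$ with $g_S(q)\geqslant 0$ and $\sum_S g_S(q)=n!$; since the $\lambda_S(q,\x)$ are pairwise distinct for generic $\x$ and depend rationally on $q$, a rigidity (interpolation) argument in $q$ — needed precisely because the flag model exists only for prime powers while $\Tqx$ is defined for all $q$ — shows the exponents $g_S$ are independent of $q$. Now specialize $q\to 1$: this is legitimate as $\lambda_S(q,\x)$ has no pole there, and $\Tqx$ reduces to the classical Tsetlin library $\Tx$, whose characteristic polynomial is $\prod_{S\subseteq[n]}(t-\lambda_S(1,\x))^{d_{n-|S|}}$ by \eqref{eq:tsetlineigen}--\eqref{derange} \cite{Donnelly.1991,kapoor1991stochastic,Phatarfod.1991}. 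Comparing multiplicities of the generically distinct roots gives $g_S=d_{n-|S|}$, which is the claimed factorization. For diagonalizability, recall from \cite{Brown} that the transition operator of a left regular band random walk annihilates the product over its distinct eigenvalues; applying this to $\hatTqx$ (for $q$ a prime power and $\x$ generic), restricting to the invariant subspace $M$, and interpolating in $q$, we obtain $\prod_{S\subseteq[n]}(\Tqx-\lambda_S(q,\x)\,\mathrm{Id})=0$, so when $\x$ is generic the minimal polynomial of $\Tqx$ divides the squarefree polynomial $\prod_S(t-\lambda_S(q,\x))$ and $\Tqx$ is diagonalizable.

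The main obstacle is the echelon-form line count of the second paragraph: obtaining $\lambda_V$ in exactly the normalization $\sum_{j}x_{i_j}/q^{\,n-i_j-j+1}$ and checking it depends only on the pivot set, which is what collapses the subspace-indexed spectrum of $\hatTqx$ down to the subset-indexed forms $\lambda_S(q,\x)$. A secondary, more technical point is the rigidity/interpolation in $q$ used to transfer the prime-power information about flags to the all-$q$ statement about permutations; once these are in place, the comparison with the classical Tsetlin spectrum at $q=1$ is formal.
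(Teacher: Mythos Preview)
Your proposal is correct and follows essentially the same route as the paper's own proof: compute the eigenvalues of $\hatTqx$ via Brown's left regular band theory (your echelon-form line count is precisely \cref{prop:flageigenvalues}), use the inclusion $\incls$ to show that the characteristic polynomial of $\Tqx$ divides that of $\hatTqx$ for prime powers $q$, interpolate in $q$ to make the multiplicities constant, and then read them off from the classical $q=1$ result. The paper packages your ``rigidity/interpolation in $q$'' step as a standalone lemma (\cref{darij_lemma}), but the content is identical to what you sketch.
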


\cref{thm:evalues Sn} is proven in \cref{ss:eigenvalue perm}.

\begin{remark}
Note the following regarding \cref{thm:evalues Sn}:
\begin{enumerate}
\item When $k=0$, the eigenvalue $\lambda_\emptyset = 0$ occurs with multiplicity $d_n$.
\item When $k=n$, then  $\lambda_{\{1,2,\ldots,n\}} = x_1+\cdots+x_n$ occurs with multiplicity $d_0 = 1$.
\item
The derangement number $d_1$ is zero, and hence $\lambda_S$ does not occur as an eigenvalue if $|S| = n-1$.
\end{enumerate}
\end{remark}

\begin{example}
Continuing Example \ref{ex:hecke n=3}, the transition matrix for $\Tqx$ when $n=3$ has eigenvalues $0,0,\frac{x_1}{q^2},\frac{x_2}{q},x_3,x_1+x_2+x_3$ (stated with multiplicity).
The stationary distribution defined in~\eqref{equation.stationary}, written as a vector in the above basis, is given by
\begin{equation}
\label{eq:psiqexample}
\Psiqx= 
\tfrac{1}{x_1+x_2+x_3}
\left(
\renewcommand{\arraystretch}{1.5}
\begin{array}{c}
qx_1\frac{q(x_1+x_2) - x_1}{q^2(x_1+x_2+x_3) - x_1}\\
x_1\frac{(q-1)x_1 + q^2 x_3}{q^2(x_1+x_2+x_3) - x_1}\\
\frac{q x_1 x_2}{q(x_1+x_2+x_3)-x_2}\\
x_2 \frac{(q-1)x_2 + q x_3)}{q(x_1+x_2+x_3)-x_2}\\
\frac{x_1 x_3}{x_1+x_2}\\
\frac{x_2 x_3}{x_1+x_2}
\end{array}
\right)^T.
\end{equation}
\end{example}

%%%%%%%%%%%%%%%%%%%%%%%%%%%%%%%%%%%%%%%%%%%%%%%%%%%%%%%%%%%
\subsection{$q$-Tsetlin library on words}
\label{section.qTsetlin on words}

We now generalize the $q$-Tsetlin library from \cref{section.qTsetlin on perms} to the case where there are multiple copies of each book---or in other words, to act on words.
The $q=1$ Tsetlin library has been generalized to this situation; see \cite{ASST.2015} for two generalizations. 

We work with the 
alphabet $\{1,\ldots,\ell\}$. Let $\m = (m_1, \dots, m_\ell)$ be a tuple of positive integers summing to $n$. 
Let $W_{\m}$ be the set of all words in $\{1,\ldots,\ell\}$ of length $n$ with $m_j$ occurrences of the letter $j \in \{1,\ldots,\ell\}$. For example,
\[
W_{(2, 2)} = (1122, 1212, 1221, 2112, 2121, 2211),
\]
in lexicographic order.

%%%%%%%%%%%%%%%%%%%%%%%%%%%%%%%%%%%%%%%%%%%%%%%%%%%%%%%%%%
\subsubsection{Definition of the $q$-Tsetlin library on words}
Formally, the Hecke generators $T_1, \dots, T_{n-1}$ act on words the same way as on permutations, given in \eqref{eq:heckeonperm}. 
More precisely, the generator $T_i$ for $1\leqslant i<n$ acts on $w=(w_1\ldots w_n) \in W_{\m}$ as
\begin{equation}
\label{eq:heckeonwords}
	w  \cdot T_i := \begin{cases}
	q (w_1\dots w_{i+1}\; w_i\dots w_n) \quad & \text{if } w_{i+1} \leqslant w_i, \\
	(w_1 \dots w_{i+1} w_i \dots w_n) + (q-1) w & \text{if } w_{i+1} > w_i.
\end{cases}
\end{equation}
Notice that we could have placed the equality sign in the second case as well without altering the meaning.
A probability $\xo_j$ is associated to every letter $j\in \{1,\ldots,\ell\}$ so that 
\[ \xo_1 + \cdots + \xo_\ell = 1.\]
We define the operator $\wordX(q,\xbar)$ to assign probabilities to a word $w$ as follows, with $[m]_q$ as in \eqref{equation.q integer}:
\[
	w \cdot \wordX(q,\xbar) := 
	\frac{\xo_{w_1}}{q^{m_{w_1+1}+\cdots+m_\ell}[m_{w_1}]_q} w.
\]

The $q$-Tsetlin library on words is then given by the right action of the element
\[
\wordTqx := \sum_{i=1}^{n} \,  T_{i-1}\cdots T_1\, \wordX(q,\xbar).
\]

\begin{example}
\label{ex:words122}
On the ordered basis of words 
$W_{(1,2)}=(122, 212, 221)$ we can represent $\mathcal{T}_{W_{(1,2)}}(q,\xbar)$ by the matrix
\begin{equation}
\label{matrix122}
\mathcal{T}_{W_{(1,2)}}(q,\xbar) = 
\begin{pmatrix}
 \xo_1 &  \xo_2 & 0\\
\xo_1 & \frac{\xo_2}{[2]_q} & q\frac{\xo_2}{[2]_q} \\
\xo_1 & 0 & \xo_2
\end{pmatrix}\,.
\end{equation}
\end{example}

\noindent When $q = 1$, the $q$-Tsetlin library is a special case of the promotion Markov chain on posets where the poset is a union of chains of lengths 
$m_1, \dots, m_\ell$~\cite[last paragraph of Section 5.1]{AyyerKleeSchilling.2014}. 
Therefore, the transitions here include the transitions of this 
promotion Markov chain. Arguing as in the beginning of \cref{section.stationary_perms}, for $q\geqslant 1$ the $q$-Tsetlin library on words is irreducible, 
and the stationary distribution is unique. 

%%%%%%%%%%%%%%%%%%%%%%%%%%%%%%%%%%%%%%%%%%%%%%%%%%%%%%%%%%%
\subsubsection{Stationary distribution}

We obtain the unique stationary distribution for $\wordTqx$ from the stationary distribution in \cref{th:eigenvectorH} for $\Tqx$.

The notation of \cref{section.stationary_perms} naturally generalizes to words instead of permutations.
Let $\m=(m_1,\ldots,m_\ell)$ be a composition and $\bv=(b_1,\ldots,b_k)$ ($k\leqslant n$) a subword of $W_\m$. Then if $\bv$ 
is weakly decreasing, $\kappa$ as defined in \eqref{eq:kappadef} generalizes in the context of words to 
\[
	\kappa(\bv;\xbar) := \sum_{i=1}^{k} \frac{q^{i+m_1+ \dots + m_{b_i}-k-1}}{[m_{b_i}]_{q}} \xo_{b_i}.
\]
If $b$ is not weakly decreasing, then set $\kappa(\bv;\xbar):=\kappa(\bv_>;\xbar)$ as before.

For $w \in W_{\m}$, we again say that $i$ is a \textit{left-to-right minimum} if $w_i$ is the weakly smallest among $(w_1, \dots, w_i)$.
We reuse the notation $\LRM(w)$ from \cref{section.stationary_perms}
for the set of left-to-right minima of $w$. For example,
$\LRM(2231) = \{1, 2, 4\}$. Suppose $k \notin \LRM(w)$. 
Then there exist positions $i < k$ such that $w_i < w_k$. Again, let 
$p_k = p_k(w)$ be the smallest such position.
The notion of inversion for words, also denoted $\inv$, generalizes that for permutations in the natural way. 
To be precise, we define $\inv(w) = \#\{(i,j) \in [n]^2 \mid i<j, w_i > w_j\}$.

\begin{theorem}
\label{th:steady state words}
Let $q \geqslant 1$ be a real number, so that $q^{-1} \subset (0,1]$. 
\begin{enumerate}
\item Let $\m = (m_1, \dots, m_\ell)$ be a tuple of positive integers. 
For $w \in W_{\m}$, the component $\Psi(q,\xbar)_w$ of the eigenvector $\Psi(q,\xbar)$ of the $q$-Tsetlin library $\wordTx$ satisfying
\begin{equation}
	\Psi(q,\xbar) \cdot \wordTx = \left( \sum_{i=1}^\ell \xo_i \right)  \Psi(q,\xbar),
\end{equation}
is given by
\begin{equation}
\begin{split}
\Psi(q,\xbar)_w =& 
\frac{ \displaystyle q^{-\inv(w)} \prod_{i=1}^\ell q^{-m_i+1} [m_i]_{q}!}
{\displaystyle \prod_{k=1}^{n-1}  
\left(\xo_1+\cdots+\xo_\ell-q^{k-n-1} \kappa(w_1, \dots, w_{k-1};\xbar) \right)} \\
& \times
\left( \prod_{\substack{k=1 \\ k \in \LRM(w)}}^{n-1} \kappa(w_k;\xbar) \;
\prod_{\substack{k=1 \\ k \notin \LRM(w)}}^{n-1} 
(\kappa(w_{p_k}, \dots, w_k;\xbar) 
- q^{-1} \kappa(w_{p_k}, \dots, w_{k-1};\xbar) ) \right).
\end{split}
\end{equation}
\item Each factor of $\Psi(q,\xbar)_w$ is positive.
\item The entries of $\Psi(q,\xbar)$ sum to $1$.
\end{enumerate}
\end{theorem}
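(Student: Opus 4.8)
The plan is to deduce Theorem~\ref{th:steady state words} from Theorem~\ref{th:eigenvectorH} via the bottom square of the commuting diagram in \cref{figure.commuting diagram}, which is established in \cref{section.flags} (specifically in \cref{theorem.bottomcommutativediagram}). The projection map $\projs \colon \C(q,\x)[\symm_n] \to \C(q,\xbar)[W_\m]$ intertwines $\Tqx$ with $\wordTqx$, so it sends a left eigenvector of $\Tqx$ of eigenvalue $\sum_i x_i$ to a left eigenvector of $\wordTqx$ of eigenvalue $\sum_i \xbar_i$. Since $\wordTqx$ is irreducible for $q\geqslant 1$, its stationary distribution is one-dimensional, so the image $\projs(\Psiqx)$ must be a scalar multiple of $\Psi(q,\xbar)$; normalizing so entries sum to $1$ then pins down the constant. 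Concretely, I would fix the specialization $x_i \mapsto$ (a suitable function of $\xbar$ and the multiplicities $\m$) dictated by \eqref{equation.rates xbar x}, and identify $\Psi(q,\xbar)_w$ as the appropriate sum of $\Psiqx_\pi$ over all $\pi \in \symm_n$ whose ``standardization-collapse'' is $w$ (i.e. the fiber of the lumping map).

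The core of the argument is therefore a lumping/summation computation: substitute the prescribed rates into formula \eqref{eq:ssH} and sum over the fiber. I expect the key simplification to come from the behavior of $\kappa$ under the rate substitution. Under $x_j \mapsto \xbar_{?} \cdot q^{(\cdots)}/[m_?]_q$-type assignments, the function $\kappa(\bv;\x)$ on a decreasing tuple $\bv$ of letters-with-positions should collapse to exactly $\kappa(\bv;\xbar)$ as defined for words, up to the bookkeeping powers of $q$; in particular $\kappa(w_0;\x)$ becomes $\xbar_1 + \cdots + \xbar_\ell$, matching \eqref{equation.kappa omega0}. The factors $\prod_{k\in\LRM} \kappa(\pi_k;\x)$ and the $\notin\LRM$ differences $\kappa(\ldots,\pi_k) - q^{-1}\kappa(\ldots,\pi_{k-1})$ should be shown to depend only on the underlying word $w$ and its left-to-right minima, so that summing over the fiber amounts to multiplying by the number of standardizations, which introduces precisely the $\prod_i q^{-m_i+1}[m_i]_q!$ factor (this is the word-analogue of the observation that there are $\prod_i m_i!$ standardizations, $q$-deformed by the inversions internal to each block of equal letters). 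The $q^{-\inv}$ prefactor transforms correctly because $\inv(\pi) = \inv(w) + \sum_i \binom{m_i}{2}$-type identities relate permutation inversions to word inversions plus block-internal inversions, and the latter sum telescopes against the $q$-factorials.

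Parts (2) and (3) then come essentially for free: positivity of each factor of $\Psi(q,\xbar)_w$ follows from positivity of the corresponding factors of $\Psiqx_\pi$ (Theorem~\ref{th:eigenvectorH}(2)) together with the fact that the rate substitution sends positive reals to positive reals when $q\geqslant 1$; and the claim that the entries sum to $1$ follows because lumping a probability distribution along a surjection yields a probability distribution, using Theorem~\ref{th:eigenvectorH}(3).

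The step I expect to be the main obstacle is verifying that the stated closed form is \emph{exactly} the fiber sum, rather than merely proportional to it: one must confirm that every fiber of the lumping map contributes the same value of the unnormalized product in \eqref{eq:ssH} (so the sum is genuinely ``multiply by fiber size'') \emph{and} that the $q$-counting of standardizations-within-blocks matches $\prod_i q^{-m_i+1}[m_i]_q!$ on the nose, with no leftover powers of $q$. This requires a careful analysis of how $\LRM$, the pointers $p_k$, and the $\kappa$-differences interact with permuting equal letters within a block, and is where the bulk of the (deferred) computation lies. An alternative, if the direct lumping is unwieldy, is to bypass the permutation formula and instead repeat the flag-level stationary-distribution argument of \cref{section.stationary distribution} directly for words, lumping $\hatTqx$ along the composite projection $\C(q,\x)[G/B] \to \C(q,\xbar)[W_\m]$; but I would attempt the permutation-to-word route first since Theorem~\ref{th:eigenvectorH} is already in hand.
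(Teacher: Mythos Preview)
Your approach is essentially the same as the paper's: lump $\Tqx$ to $\wordTqx$ via \cref{theorem.bottomcommutativediagram} and \eqref{eq:permlumping}, verify that $\kappa(\bv;\x)$ specializes to $\kappa(\bv;\xbar)$ under the $\m$-compatible substitution, and observe that the $q$-weighted fiber sum $\sum_{\pi\mapsto w} q^{-\inv(\pi)}$ produces exactly the prefactor $q^{-\inv(w)}\prod_i q^{-m_i+1}[m_i]_q!$; parts~(2) and~(3) then follow from Theorem~\ref{th:eigenvectorH} as you say.

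The one place where the paper is sharper than your outline is the step you correctly flag as the main obstacle, namely showing that $q^{\inv(\pi)}\Psiqx_\pi$ is constant on each fiber. Rather than analyzing directly how $\LRM$, the pointers $p_k$, and the $\kappa$-differences in \eqref{eq:ssH} behave under swapping two letters in the same block (which is doable but delicate because those combinatorial data genuinely change), the paper exploits the relation $\Psiqx_\pi = q^{\coinv(\pi)}\Psi(q,\x)_{F_\pi}$ from \eqref{equation.Psi relation} to reduce to showing $\Psi(q,\x)_{F_\pi}=\Psi(q,\x)_{F_{s_i\pi}}$ when $y_i=y_{i+1}$, and verifies this at the flag level (\cref{cor:y equal}) where the stationary formula has no $\LRM$ case split and the factor-by-factor comparison is cleaner.
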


\cref{th:steady state words} will be proved in \cref{ss:ss words} by lumping the $q$-Tsetlin library on permutations to the one on words.
The rates under this lumping are related via
\begin{equation}
\label{equation.rates xbar x}
	\xo_i = \frac{[m_i]_{q}}{q^{m_i-1}} x_{m_1+\cdots + m_{i-1}+1}.
\end{equation}

\begin{example}
\label{eg:evalues words}
For $\m = (1, 2)$, the stationary probabilities according to \cref{th:steady state words} are
\begin{equation}
\begin{split}
\Psi(q,\xbar)_{122} =& \frac{ \xo_1}
{\xo_1 + \xo_2}
= \frac{x_1}
{x_1+ (1+1/q) x_2},\\
\Psi(q,\xbar)_{212} =& \frac{(1 + 1/q) \bar{x}_1 \bar{x}_2 }
{(\xo_1 + \xo_2)((1+1/q)\xo_1 + \xo_2)}
= \frac{(1+1/q)x_1x_2 }
{(x_1 + x_2)(x_1+ (1+1/q) x_2)}, \\
\Psi(q,\xbar)_{221} =& \frac{\xo_2^2 }
{(\xo_1 + \xo_2)((1+1/q)\xo_1 + \xo_2) } = \frac{(1+1/q)x_2^2 }
{(x_1 + x_2)(x_1+ (1+1/q) x_2)},
\end{split}
\end{equation}
where (up to a power of $q$) the right hand sides arise from \cref{ex:hecke n=3} with $x_3= q^{-1} x_2$ and the identification
of rates as in~\eqref{equation.rates xbar x}. One may verify that these expressions indeed comprise the left stationary state 
of \eqref{matrix122} in \cref{ex:words122}.
\end{example}
    
%%%%%%%%%%%%%%%%%%%%%%%%%%%%%%%%%%%%%%%%%%%%%%%%%%%%%%%%%%%
\subsubsection{Eigenvalues}

To state the eigenvalues of the transition matrix for $\wordTqx$, we need some notation from the promotion Markov chain on linear 
extensions studied in \cite{AyyerKleeSchilling.2014}.
For our work here, it suffices to focus on posets which are
disjoint unions of chains. More precisely, let 
$P_{\m} = [m_1] + \cdots + [m_\ell]$
be the union of chains of
size $n$ whose elements are labeled consecutively within chains.
For a naturally labelled poset $P$, let $\mathcal{L}(P)$ be the set of linear extensions.
For example, $P_{(2, 2)}$ is the poset on $4$ elements labelled $[4]$ whose only covering relations are $1 < 2$ and $3 < 4$. 
Then,
\[
\mathcal{L}(P_{(2, 2)}) = 
\{1234, 1324, 1342, 3124, 3142, 3412\}.
\]

We say that $\pi \in \mathcal{L}(P)$ is a \emph{poset derangement} if it is a derangement when considered as a bijection from the set of labels of $P$ to itself. 
Let $d_P$ be the number of poset derangements of $P$.
For the above example, $d_{P_{(2, 2)}} = 2$ because
only two of the linear extensions above are derangements, 
namely $3142$ and $3412$.

Recall that an \emph{upper set} in a poset $P$ is a subset $S$ of $P$ such that if $x \in S$ and $x < y$ in $P$, then $y \in S$. If $S$ 
is an upper set in $P$, then $P \setminus S$ is also a poset in the obvious way.
When $P = P_{\m}$, all upper sets are determined by weak compositions $\av = (a_1, \dots, a_\ell)$, 
where $a_i \leqslant m_i$ is the number of elements in the $i$-th chain belonging to the upper set. \cref{fig:poset} gives an example 
of the poset $P_{(5,3,4,2)}$ and upper set $\av=(2,0,3,1)$.

\begin{theorem}
\label{thm:evalues words}
The eigenvalues of the transition matrix of the $q$-Tsetlin library $\wordT(q,\xbar)$ on $W_{\m}$ 
are as follows. For every upper set $S$ of $P_{\m}$ determined by the composition $\av = (a_1, \dots, a_\ell)$, 
there is an eigenvalue
\begin{equation}
\label{eq:simplifiedwordevalue}
\lambda_{\av}(q,\xo) = \sum_{j=1}^{\ell} \xo_{j} \frac{q^{(a_{j+1} + \cdots + a_\ell)}[a_j]_q}{q^{n-n_j}[m_\ell]_q}=
\xo_{\ell} \frac{[a_\ell]_q}{[m_\ell]_q} + \frac{\xo_{\ell-1}}{q^{m_\ell-a_\ell}} \frac{[a_{\ell-1}]_q}{[m_{\ell-1}]_q} 
+ \cdots + \frac{\xo_{1}}{q^{m_2-a_2 + \ldots + m_\ell-a_\ell}} \frac{[a_{1}]_q}{[m_{1}]_q}  
\end{equation}
with multiplicity $d_{P_{\m} \setminus S}$.
\end{theorem}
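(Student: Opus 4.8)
The plan is to deduce the eigenvalue statement for $\wordTqx$ from the corresponding statement for $\Tqx$ (\cref{thm:evalues Sn}) via the commuting square in \cref{figure.commuting diagram}, exactly as \cref{th:steady state words} is deduced from \cref{th:eigenvectorH}. First I would make precise the lumping $\C(q,\x)[\symm_n] \to \C(q,\xbar)[W_\m]$ given by the projector $\projs$ that sends a permutation to its associated word (replacing the entries $m_1+\cdots+m_{i-1}+1, \dots, m_1+\cdots+m_i$ all by the letter $i$), together with the rate identification \eqref{equation.rates xbar x}. The bottom square of \cref{figure.commuting diagram} (proven in \cref{theorem.bottomcommutativediagram}) says $\projs \circ \Tqx = \wordTqx \circ \projs$ on $\C(q,\x)[\symm_n]$ after the substitution $x_{m_1+\cdots+m_{i-1}+j} = q^{-(j-1)} x_{m_1+\cdots+m_{i-1}+1}$ for $1 \le j \le m_i$ (the specialization seen in \cref{eg:evalues words} with $x_3 = q^{-1}x_2$). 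Consequently every eigenvalue of $\wordTqx$ is an eigenvalue of the specialized $\Tqx$, and conversely the transpose/inclusion map $\incls$ lets one lift eigenvectors, so the spectrum of $\wordTqx$ is the multiset of eigenvalues of $\Tqx$ that survive the specialization, with multiplicities governed by the fibers of $\projs$.

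Next I would carry out the specialization on the eigenvalue formula of \cref{thm:evalues Sn}. A subset $S = \{i_1 > \cdots > i_k\} \subseteq [n]$ contributes $\lambda_S(q,\x) = \sum_{j=1}^k x_{i_j} q^{-(n-i_j-j+1)}$. Grouping the block of indices belonging to chain $r$ (i.e. those in $\{m_1+\cdots+m_{r-1}+1, \dots, m_1+\cdots+m_r\}$) and applying $x_{m_1+\cdots+m_{r-1}+j} = q^{-(j-1)}\xo_r [m_r]_q^{-1} q^{m_r-1}$ from \eqref{equation.rates xbar x}, the contribution of chain $r$ collapses to $\xo_r$ times a $q$-power times $[a_r]_q/[m_r]_q$, where $a_r = |S \cap (\text{chain } r)|$; here the key identity is that a sum of consecutive powers $q^{e} + q^{e+1} + \cdots + q^{e+a_r-1}$ equals $q^e [a_r]_q$, which is what makes the specialized $\lambda_S$ depend on $S$ only through the composition $\av = (a_1,\dots,a_\ell)$. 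Matching the $q$-exponents then yields precisely \eqref{eq:simplifiedwordevalue}; I would verify the exponent bookkeeping $n - n_j = m_{j+1} + \cdots + m_\ell$ and $a_{j+1}+\cdots+a_\ell$ by tracking how the positions $i_j - j + 1$ transform under the chain decomposition.

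It remains to compute the multiplicity, i.e. to show that the number of subsets $S \subseteq [n]$ with a fixed profile $\av$ that contribute a genuine (linearly independent, non-collapsing) eigenvector of $\wordTqx$ is $d_{P_\m \setminus S}$. For this I would use the inclusion map $\incls : \C(q,\xbar)[W_\m] \hookrightarrow \C(q,\x)[\symm_n]$ and argue that eigenvectors of $\wordTqx$ for $\lambda_{\av}$ correspond to those eigenvectors of $\Tqx$ (summed over all $S$ with profile $\av$) that are constant on $\projs$-fibers, equivalently, invariant under the parabolic action permuting within chains. Concretely, the $d_{n-k}$-dimensional eigenspace of $\Tqx$ decomposes according to the $q=1$ combinatorics governed by derangements, and the $W_\m$-constant part is counted by poset derangements of $P_\m \setminus S$ — this is the statement that, for the promotion chain on a union of chains, the eigenvalue indexed by the upper set $S$ has multiplicity $d_{P_\m\setminus S}$ (the $q=1$ case is \cite{AyyerKleeSchilling.2014}), and the Hecke deformation does not change multiplicities because the commuting diagram is compatible with the $q=1$ specialization and $\Tqx$ is diagonalizable for generic $\x$. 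I would then invoke $\dim \C(q,\xbar)[W_\m] = \binom{n}{m_1,\dots,m_\ell} = \sum_{\av} d_{P_\m \setminus S}$ (summing over all upper-set profiles, with $|W_\m| = |\mathcal{L}(P_\m)|$) to confirm that no eigenvalues are missing and the multiplicities are exactly as claimed.

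The main obstacle I anticipate is the multiplicity count: showing that passing to $\projs$-invariants cuts the $d_{n-k}$-dimensional eigenspace of $\Tqx$ down to exactly $d_{P_\m \setminus S}$, uniformly over the $q$-deformation. The cleanest route is probably to establish that the characteristic polynomial of $\wordTqx$ equals its $q=1$ value up to the substitution of eigenvalues (since the left regular band $\mathcal{F}_n(q)$ controlling $\hatTqx$ has $q$-independent combinatorial structure, and the projections in \cref{figure.commuting diagram} are defined over $\C(q)$), thereby reducing the entire multiplicity question to the known $q=1$ promotion-chain result in \cite{AyyerKleeSchilling.2014}; the genericity/diagonalizability clause of \cref{thm:evalues Sn} is what guarantees this reduction is valid.
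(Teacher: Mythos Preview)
Your proposal is correct and, in its final form, matches the paper's proof: the eigenvalue computation proceeds exactly as you outline (grouping $S$ by blocks $M_j$ and using that the $a_j$ elements of $S\cap M_j$ occupy consecutive positions in the decreasing enumeration of $S$, yielding $q^{b_j}[a_j]_q\,\yo_j$), and the multiplicities are obtained by combining diagonalizability at prime-power $q$ with the $q=1$ result \cite[Theorem~5.3]{AyyerKleeSchilling.2014} via \cref{darij_lemma}, which is precisely the ``cleanest route'' you settle on at the end. The direct fiber-constant-eigenvector argument you sketch first is not pursued in the paper and is unnecessary once \cref{darij_lemma} is in hand.
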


\begin{figure}
\begin{tikzpicture}
[
dot/.style = {circle, fill, minimum size=#1,
              inner sep=0pt, outer sep=0pt},
dot/.default = 5pt 
] 

\def\na{5};
\foreach \i in {1,...,\na} {
    \node [dot] (\i) at (0,\i) {};
    }
\draw (1) -- (\na);
\draw [thick, decorate, decoration = brace] (-0.3,1) --  (-0.3,\na) node[left, pos=0.5, left=0.1pt]{$m_1$};
\draw [thick, decorate, decoration = brace] (0.3,\na) --  (0.3,4) node[pos=0.5,right=2pt]{$a_1$};
 
\def\nb{3};
\foreach \i in {1,...,\nb} {
    \node [dot] (\the\numexpr\i+\na\relax) at (2,\i) {};
    }
\draw (\the\numexpr\na+1\relax) -- (\the\numexpr\na+\nb\relax);
\draw [thick, decorate, decoration = brace] (1.7,1) --  (1.7,\nb) node[left, pos=0.5, left=0.1pt]{$m_2$};

\def\nc{4};
\foreach \i in {1,...,\nc} {
    \node [dot] (\the\numexpr\i+\na+\nb\relax) at (4,\i) {};
    }
\draw (\the\numexpr\na+\nb+1\relax) -- (\the\numexpr\na+\nb+\nc\relax);
\draw [thick, decorate, decoration = brace] (3.7,1) --  (3.7,\nc) node[left, pos=0.5, left=0.1pt]{$m_3$};
\draw [thick, decorate, decoration = brace] (4.3,\nc) --  (4.3,2) node[pos=0.5,right=2pt]{$a_3$};

\def\nd{2};
\foreach \i in {1,...,\nd} {
    \node [dot] (\the\numexpr\i+\na+\nb+\nc\relax) at (6,\i) {};
    }
\draw (\the\numexpr\na+\nb+\nc+1\relax) -- (\the\numexpr\na+\nb+\nc+\nd\relax);
\draw [thick, decorate, decoration = brace] (5.7,1) --  (5.7,\nd) node[left, pos=0.5, left=0.1pt]{$m_4$};
\draw [thick, decorate, decoration = brace] (6.3,2.1) --  (6.3,1.9) node[pos=0.5,right=2pt]{$a_4$};

\end{tikzpicture}
\caption{The poset $P_{\m}$ with $\m=(5,3,4,2)$ and with upper set $\av=(2,0,3,1)$. Nodes are labeled from bottom to top.
\label{fig:poset}
}
\end{figure}

\cref{thm:evalues words} is proved in \cref{ss:eigenvalue word}.

\begin{example}
Consider the case of $\m = (3, 3)$, so that $\ell = 2$ and $n = 6$.
The eigenvalues and multiplicities are given in \cref{tab:evalues words}. 
The last row corresponding to $\av = (3,3)$ is the unique largest eigenvalue.
\begin{table}[h!]
    \centering
    \begin{tabular}{|c|c|c|}
    \hline
     Composition $\av$ & Eigenvalue    & Multiplicity \\
     \hline
    (0,0) & $0$    &  6 \\
    (1,0) & $\xo_1 /(q^{3}(1 + q +q^2))$     & 3 \\
    (0,1) & $\xo_2 /(1 + q +q^2)$    & 3 \\
    (2,0) &$\xo_1 (1+q) /(q^{3}(1 + q +q^2))$     &  1\\
    (0,2)&$\xo_2 (1+q)/(1 + q +q^2)$     &  1\\
    (1,1)&$(\xo_1 + \xo_2 q^{2})/(q^{2}(1 + q +q^2))$     & 2 \\
    (2,1)&$(\xo_1 (1 + q) + \xo_2 q^{2})/(q^{2}(1 + q +q^2))$    & 1 \\
    (1,2)&$(\xo_1 + \xo_2 (q + q^{2}))/(q (1 + q +q^2))$     & 1 \\
    (2,2)&$(1+q)(\xo_1 + \xo_2 q)/(q (1 + q +q^2))$    &  1\\
    (3,3)&$\xo_1 + \xo_2 =1$    &  1 \\
    \hline
    \end{tabular}
    \caption{The eigenvalues and multiplicities for $\wordT(q,\xbar)$ for $\m = (3, 3)$.
    \label{tab:evalues words}}
\end{table}
\end{example}

%%%%%%%%%%%%%%%%%%%%%%%%%%%%%%%%%%%%%%%%%%%%%%%%%%%%%%%%%%%
\subsection{$q$-Tsetlin library on cosets and flags}
\label{section.qTsetlin on flags}

As discussed in the introduction, a crucial step in our analysis of the $q$-Tsetlin library on permutations and words is the lumping from 
a Markov chain on the space of flags. We conclude this section with a description of this Markov chain, and the results we prove and 
utilize to understand $\Tqx$ and $\wordTqx$.

We will first establish some basic facts about the space of flags over $\F_q^n$. Let $G = \GL_n(\mathbb{F}_q)$ and $B$ (resp. $\bB$) be 
the Borel subgroup of upper (resp. lower) triangular matrices in $G$. 
The set $G/B$ is the set of cosets of $B$, and $\C[G/B]$ is the vector space of $\C$-linear combinations of these cosets. 
Define the set
\begin{equation} \label{eq:cosetreps}
    \mathfrak{G}:= \{ g \, \in G \mid \textrm{ the rightmost (resp. topmost) non-zero entry of $g$ in every row (resp. column) is 1}   \}.
\end{equation}
Then $\mathfrak{G}$ is a set of distinguished coset representatives for $G/B$.
Furthermore, any $g' \not \in \mathfrak{G}$ can be row reduced to an element $g \in \mathfrak{G}$ via right multiplication by an element of $B$. 

There is a further decomposition of $G/B$ by \emph{double cosets} as follows. 
Letting $\bB$ be as above, we consider the double coset $\bB \pi B$. A good representative for this set is a permutation matrix $\pi$; 
the elements in $\bB \pi B$ 
are the matrices such that the rightmost nonzero entries in each row form the permutation matrix $\pi$ and this entry is also the topmost 
nonzero entry in its column. With $\pi$ a permutation, we will use the notation 
\[ 
	[\pi] := \bB \pi B. 
\]
Here, we abuse notation slightly by thinking of $\pi$ on the left-hand-side as a permutation in 
$\symm_n$, written in one-line notation, whereas we view the distinguished representative of $\bB \pi B$ as a permutation matrix in $G$. 
Note that the size of the set $[\pi]$ is 
$q^{\coinv(\pi)}$, where $\coinv(\pi)$ is the number of \textit{coinversion} pairs, i.e. $(i, j)$ with $1 \leqslant i < j \leqslant n$ such that $\pi_i < \pi_j$. 
(The presence of coinversion rather than inversion is because we take double cosets with \emph{lower} triangular matrices $\bB \pi B$ rather 
than upper triangular matrices $B \pi B$.)

\begin{example}\label{ex:doublecosets}
In $\GL_3(\mathbb{F}_q)$, the double cosets $[\pi] = \bB \pi B$ for $\pi \in \symm_3$ are given as follows:
\begin{equation}
\label{eq:example3q}
\begin{split} 
& [123] = \left\{ \left.
\begin{pmatrix} 1 & 0 & 0 \\ \alpha & 1 & 0 \\ \beta & \gamma & 1
\end{pmatrix} B\, \right| \alpha, \beta, \gamma \in \mathbb{F}_q
\right\}, \\
& [213] = \left\{ \left.
\begin{pmatrix} 0 & 1 & 0 \\ 1 & 0 & 0 \\ \alpha & \beta & 1
\end{pmatrix} B\, \right| \alpha, \beta \in \mathbb{F}_q
\right\}, 
\quad [132] = \left\{ \left.
\begin{pmatrix} 1 & 0 & 0 \\ \alpha & 0 & 1 \\ \beta & 1 & 0
\end{pmatrix} B\, \right| \alpha, \beta \in \mathbb{F}_q
\right\}, \\
& [312] = \left\{ \left.
\begin{pmatrix} 0 & 1 & 0 \\ 0 & \alpha & 1 \\ 1 & 0 & 0
\end{pmatrix} B\, \right| \alpha \in \mathbb{F}_q
\right\},
\quad\quad
[231] = \left\{ \left.
\begin{pmatrix} 0 & 0 & 1 \\ 1 & 0 & 0 \\ \alpha & 1 & 0
\end{pmatrix} B\, \right| \alpha \in \mathbb{F}_q
\right\}, \\
& [321] = \left\{
\begin{pmatrix} 0 & 0 & 1 \\ 0 & 1 & 0 \\ 1 & 0 & 0
\end{pmatrix} B
\right\}.
\end{split}
\end{equation}
\end{example}

The set $G/B$ can equivalently be identified with the set of complete flags in the vector space $\mathbb{F}_q^n$. A \textit{flag} is a 
sequence $(V_0 \subseteq V_1 \subseteq \cdots \subseteq V_n)$ of subspaces of $\mathbb{F}^n_q$ such 
that $\dim(V_i)=i$ for $0\leqslant i \leqslant n$. Let
\begin{equation}
\label{equation.flags}
\mathfrak{F} := \{(V_0\subseteq V_1 \subseteq \cdots \subseteq V_n) \mid  \dim(V_i) = i \}.
\end{equation}
One can view a coset $gB$ for $g \in \mathfrak{G}$ as a flag $(V_0 \subseteq V_1 \subseteq \cdots \subseteq V_n) = \F_q^n$ 
in $\mathfrak{F}$ by setting $V_i$ to be the span of the first $i$ columns of $g$. In this paper, we move interchangeably between viewing cosets in $G/B$ 
as coset representatives and flags, as the three perspectives have different advantages depending on the context.  

\begin{example}\label{ex:lines}
The number of lines in $\mathbb{F}^n_q$ is $[n]_q$ and the number of flags in $\mathbb{F}^n_q$ is $[n]_q!$. Thus, the space $\mathbb{F}^2_2$ has $[2]_2=3$ lines:
\[
\langle e_1 \rangle,\ \langle e_2 \rangle\ \text{and}\ \langle e_1+e_2 \rangle,
\]
and there are $[2]_2! =3$ flags in $\mathbb{F}^2_2$; the corresponding coset representatives are to the left:
\begin{align*}
&
\begin{array}{l}
\{0\} \subseteq \langle e_1 \rangle \subseteq \langle e_1,e_2 \rangle\\
\{0\} \subseteq \langle e_1+e_2 \rangle \subseteq \langle e_1,e_2\rangle
\end{array},\qquad
\begin{pmatrix} 1 & 0  \\ \alpha & 1 
\end{pmatrix}\; (\alpha \in \{0,1\}),\\
&\;\;\{0\} \subseteq \langle e_2 \rangle \subseteq \langle e_1,e_2 \rangle,\qquad\qquad\;\;
\begin{pmatrix} 0 & 1  \\ 1 & 0 
\end{pmatrix}.
\end{align*}
\end{example}

One can identify the distinguished representative of $[\pi]$ given by a permutation matrix with a flag $F_\pi$.
For example, when $n=3$, we write below $F_\pi$ (left) and $|[\pi]| = \coinv(\pi)$ (right) for $q=2$:
\[
\begin{aligned}
F_{321} &= (\{0\} \subseteq \langle e_3 \rangle \subseteq \langle e_3,e_2 \rangle \subseteq V), \qquad && 1\ \text{flag},\\
F_{312} &= (\{0\} \subseteq \langle e_3 \rangle \subseteq \langle e_3,e_1 \rangle \subseteq V), && 2\ \text{flags},\\
F_{231} &= (\{0\} \subseteq \langle e_2 \rangle \subseteq \langle e_2,e_3 \rangle \subseteq V), && 2\ \text{flags},\\
F_{213} &= (\{0\} \subseteq \langle e_2 \rangle \subseteq \langle e_2,e_1 \rangle \subseteq V), && 4\ \text{flags},\\
F_{132} &= (\{0\} \subseteq \langle e_1 \rangle \subseteq \langle e_1,e_3 \rangle \subseteq V), && 4\ \text{flags},\\
F_{123} &= (\{0\} \subseteq \langle e_1 \rangle \subseteq \langle e_1,e_2 \rangle \subseteq V), && 8\ \text{flags}.
\end{aligned}
\]

Iwahori~\cite{Iwahori.1964} (see also Halverson--Ram~\cite{HalversonRam}) defines an action of the Hecke algebra on $\C[G/B]$ which we review in \cref{section.flags}. Anticipating the results from that section, the Hecke action on $\C[G/B]$ is most easily expressed in terms of an action on flags
\begin{equation}\label{eq:defheckeonflagaction}
(V_0 \subseteq \cdots \subseteq V_n) \cdot T_i = \sum_{W\neq V_i} (V_0 \subseteq \cdots \subseteq V_{i-1} \subseteq 
W \subseteq V_{i+1} \subseteq \cdots \subseteq V_n),
\end{equation} 
where the sum is over all subspaces $V_{i-1} \subseteq W \subseteq V_{i+1}$ such that $\dim( W) = i$ and $W \neq V_i$. 

%%%%%%%%%%%%%%%%%%%%%%%%%%%%%%%%%%%%%%%%%%%%%%%%%%%%%%%%%%%
\subsubsection{Definition of the $q$-Tsetlin library on cosets}

Recall from \eqref{eq:TdefHecke} that 
\[ 
	\mathcal{T}(q) := \frac{1}{[n]_q} \sum_{i=1}^n T_{i-1} \cdots T_1 .
\] 
As in the case of permutations and words, we 
write $\hatTq$ to denote the generator of this Markov chain acting on flags. Using \eqref{eq:defheckeonflagaction}, it was shown in~\cite[Prop 4.10]{qr2r} 
that $\hatTq$ has the following concrete combinatorial description:
\begin{equation}
\label{eq:r2tflags}
 	(V_0 \subseteq \cdots \subseteq V_n) \cdot \hatTq = \frac{1}{[n]_q} \sum_{\text{Lines } L \subseteq \F_q^n} 
 	(V_0 \subseteq L \subseteq V_1 + L \subseteq \cdots \subseteq V_{n-1} + L \subseteq V_n)\ \widehat{}\ ,
\end{equation}
where $\,{}^{\widehat{}} \,$ means that the duplicate subspace on the right hand side (which by definition will exist) is removed. In other words, 
$\hatTq$ acts on a flag by 
adding all possible lines in $\F_q^n$ to the front of the flag.
\begin{example}\label{ex:r2tonflags}
    Consider the case $n=2$ and $q=2$, and fix the flag $F = \big( \langle e_1 \rangle \subseteq \langle e_1, e_2 \rangle\big)$, where we dropped
    $V_0=\{0\}$ for ease of notation. Then recalling the three lines in $\F_2^2$ from \cref{ex:lines}, we have
    \begin{align*}
        F \cdot \hatTq = \frac{1}{3} \bigg(  \big( \langle e_1 \rangle \subseteq \langle e_1, e_2 \rangle\big) &+  \big( \langle e_1 + e_2 \rangle 
        \subseteq \langle e_1, e_2 \rangle \big) +  \big( \langle e_2 \rangle \subseteq \langle e_1, e_2 \rangle\big) \bigg).
    \end{align*}
\end{example}

The action of $\hatTq$ can be rephrased in terms of cosets as inserting a vector in the first column of a matrix representative, and removing column $k$ from the matrix, 
where $k$ is the smallest number such that column $k$ lies in the span of the first $k-1$ columns.
A similar Markov chain has been considered by Knutson~\cite{knutson.2018}; see also \cite{ayyer_linusson.2019}.

\begin{example}\label{ex:addlinestocoset}
Let $q=3$. Then adding the line $\langle (1,1,0)\rangle$ to the flag 
$$
\{ 0\} \subseteq \langle(0,1,1)\rangle\ \subseteq\ \langle (0,1,1),(1,0,2)\rangle \subseteq \mathbb{F}_3^3
$$
corresponds at the level of cosets to
\begin{equation}\label{eq:addlinecoset}
\left(\begin{array}{c|ccc} 1 & 0 & 1 & 0 \\ 1 & 1 & 0 & 0 \\ 0 & 1 & 2 & 1
\end{array}\right)B \mapsto 
\left(\begin{array}{ccc} 1 & 0 & 0 \\ 1 & 1 & 0 \\ 0 & 1 & 1
\end{array}\right) B,
\end{equation}
where we removed $\langle (1,0,2)\rangle = \langle (1,1,0)\rangle + 2 \langle (0,1,1)\rangle$ in $\mathbb{F}^3_3$.
\end{example}

In analogy to the $q$-Tsetlin library on permutations and words, we would now like to add probabilities to $\hatTq$. 
Our weight function is defined as follows
\begin{equation}\label{eq:weightfunctionlines}
L \cdot \mathcal{X}_{G/B}(q,\x) := \frac{x_i}{q^{n-i}} \, L \, \, \, \ \text{if}\ \, \, \, L = \langle e_i + \sum_{k=i+1}^n c_k e_k\rangle.
\end{equation}
We remark that every line here can be uniquely written as in the second equality in \eqref{eq:weightfunctionlines}. We write $[L] \big( L \cdot \mathcal{X}_{G/B}(q,\x)  \big)$ 
for the coefficient $x_i/q^{n-i}$ of $L$. Equivalently, we define $\mathcal{X}_{G/B}(q,\x)$ on cosets
\begin{equation}\label{equation.XGB}
    gB \cdot \mathcal{X}_{G/B}(q,\x) = \frac{x_i}{q^{n-i}} \, gB, 
\end{equation} 
where $i$ is the row index of the leading 1 in the first column of $g$. One can check that this function is equivalent to \eqref{eq:weightfunctionlines}, 
i.e. if $\langle L \rangle $ is the span of the first column of $gB$, then 
\[ 
	[gB] \big( gB \cdot \mathcal{X}_{G/B}(q,\x) \big) = [L] \big(L \cdot \mathcal{X}_{G/B}(q,\x) \big). 
\]
We then define the $q$-Tsetlin library on flags by the right action
\begin{equation}
    \hatTqx:=  \hatTq \, \mathcal{X}_{G/B}(q,\x).
\end{equation}

\begin{remark}
\label{remark.linemultiplicty}
Note that there are $q^{n-i}$ lines $L$ of the form as in~\eqref{eq:weightfunctionlines}
since there are $q$ choices for each coefficient $c_k$ for $i<k\leqslant n$.
For instance, in \cref{ex:addlinestocoset}, the element in \eqref{eq:addlinecoset} has weight $x_1/q^2$. It is for this reason that we keep 
a power of $q$ explicit in \eqref{defXq}.
\end{remark}

As in the case of $\hatTq$, we show in \cref{prop:flagactions} that $\hatTqx$ has the following combinatorial description on flags:
\begin{equation}
\label{eq:r2tflagsweighted}
 	(V_0 \subseteq \cdots \subseteq V_n) \cdot \hatTqx 
	= \sum_{\text{Lines } L \subseteq \F_q^n} \big((V_0 \subseteq L \subseteq V_1 + L \subseteq \cdots \subseteq 
	V_{n-1} + L \subseteq V_n)\ \widehat{}\ \big) \cdot \mathcal{X}_{G/B}(q,\x),
\end{equation}
which reduces to \eqref{eq:r2tflags} when we set $x_i=q^{n-i}/[n]_q$.
\begin{example}
    Continuing \cref{ex:r2tonflags} with $F = \big( \langle e_1 \rangle \subseteq \langle e_1, e_2 \rangle\big)$, we have
    \begin{align*}
        F \cdot \hatTqx &=  \bigg(  \frac{x_1}{2^{2-1}}\big(  \langle e_1 \rangle \subseteq \langle e_1, e_2 \rangle\big) +  \frac{x_1}{2^{2-1}}\big( \langle e_1 + e_2 \rangle \subseteq \langle e_1, e_2 \rangle \big) +  \frac{x_2}{2^{2-2}}\big( \langle e_2 \rangle \subseteq \langle e_1, e_2 \rangle\big) \bigg)\\
        &= \bigg(  \frac{x_1}{2}\big(  \langle e_1 \rangle \subseteq \langle e_1, e_2 \rangle\big) +  \frac{x_1}{2}\big( \langle e_1 + e_2 \rangle \subseteq \langle e_1, e_2 \rangle \big) +  x_2 \big( \langle e_2 \rangle \subseteq \langle e_1, e_2 \rangle\big) \bigg).
    \end{align*}
\end{example}

In fact, both $\hatTq$ and $\hatTqx$ are equivalent to generators of Markov chains defined by Brown~\cite{Brown} in the context of semigroups; 
see \eqref{eq:LRBflags} and \eqref{eq:weightedLRBflags} for a formal definition. This connection to semigroups will be a key tool in our analysis.

%%%%%%%%%%%%%%%%%%%%%%%%%%%%%%%%%%%%%%%%%%%%%%%%%%%%%%%%%%%
\subsubsection{Stationary distribution}

As mentioned above and explained in \cref{section.flags}, the process $\hatTqx$ can be shown to be equivalent to one arising from a left regular band 
algebra~\cite{Brown,BraunerComminsReiner}.
Hence we can employ the methods developed 
in~\cite{ASST.2015,RhodesSchilling.2019} to compute its stationary distribution. The generators of the left regular band are the $[n]_q$ 
lines in $\mathbb{F}_q^n$. 

Write $\Psi(q,\x)_F$ for the component of the stationary distribution $\Psi(q,\x)$ corresponding to the flag $F$. 

\begin{theorem}
\label{theorem.sd flags}
We have $\Psi(q,\x)_F = \Psi(q,\x)_{F'}$ for all flags $F,F'\in [\pi]$. Fixing $F_\pi\in [\pi]$, we have that 
\[
	\Psi(q,\x)_{F_\pi} = \prod_{k=1}^n \frac{ f(\pi_1,\ldots, \pi_k)}
	{\left(x_1 + \cdots + x_n - \sum_{s\in \{\pi_1,\ldots,\pi_{k-1}\}} \frac{x_s}{q^{n-s-b_{k}(s)}}\right)},
\]
where
\[
	f(\pi_1,\ldots,\pi_k) = \sum_{\substack{s\in \{\pi_1,\ldots,\pi_k\}\\ s\leqslant \pi_k}} 
	\frac{x_s}{q^{n-s-b_k(s)}(x_1+ \cdots + x_n)} (q-1)^{\chi(s<\pi_k)},
\]
$b_k(s) = \# \{ t\in \{\pi_1,\ldots,\pi_{k-1}\} \mid t>s \}$, and $\chi(s<\pi_k)=1$ if $s<\pi_k$ and 0 otherwise.
Moreover, when $x_1+\cdots+x_n=1$,
\[ 
	\sum_{F \in \mathfrak{F}} \Psi(q,\x)_{F} = 1.
\]  
\end{theorem}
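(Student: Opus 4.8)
The plan is to exploit the realization of $\hatTqx$, set up in \cref{section.flags}, as the transition operator of a random walk on the $q$-free left regular band $\mathcal{F}_n(q)$ of Brown~\cite{Brown} (here $q$ is a prime power): its generators are the $[n]_q$ lines $L\subseteq\F_q^n$, the generator $L$ carrying weight $w_L:=x_i/q^{\,n-i}$ where $i$ is the leading (topmost nonzero) coordinate of $L$, and its chambers (the minimal ideal) are exactly the complete flags $\mathfrak F$. Grouping lines by leading coordinate gives $\sum_L w_L=\sum_{i=1}^n q^{\,n-i}\cdot x_i/q^{\,n-i}=x_1+\cdots+x_n$, so when $x_1+\cdots+x_n=1$ this is a genuine probability distribution on the generating set. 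Feeding this data into the general description of the stationary distribution of a random walk on a left regular band (equivalently, an $\mathscr R$-trivial monoid) from~\cite{ASST.2015,RhodesSchilling.2019} — computed from a maximal path in the Karnofsky--Rhodes and McCammond expansion of the right Cayley graph, exactly as for the classical Tsetlin library in~\cite{AyyerKleeSchilling.2014,ASST.2015} — should produce $\Psi(q,\x)_F$ as a product over the $n$ levels along which a complete flag is assembled.

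The next step is to translate this abstract product into the stated formula. I would first rewrite the target, for a flag $F=(V_0\subseteq V_1\subseteq\cdots\subseteq V_n)$, in the form
\[
\Psi(q,\x)_F \;=\; \prod_{k=1}^{n}\frac{w\big(\{L:\ L\subseteq V_k,\ L\not\subseteq V_{k-1}\}\big)}{(x_1+\cdots+x_n)\,\cdot\,w\big(\{L:\ L\not\subseteq V_{k-1}\}\big)},\qquad w(S):=\sum_{L\in S}w_L,
\]
which is the transparent $q$-analogue of Tsetlin's formula~\eqref{equation.Psi}: the $k$-th numerator is the weight of lines carrying $V_{k-1}$ up to $V_k$, the $k$-th denominator the weight of lines not contained in $V_{k-1}$. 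Then, for $F=F_\pi$ so that $V_{k-1}=\langle e_{\pi_1},\dots,e_{\pi_{k-1}}\rangle$, one checks term by term (a routine line count once the reformulation is in hand) that these two weights equal the displayed quantities: a line in $V_{k-1}$ with leading coordinate $s\in\{\pi_1,\dots,\pi_{k-1}\}$ is obtained by fixing the $e_s$-coefficient to $1$ and choosing freely the coefficients on the $b_k(s)$ basis vectors $e_t$ with $t>s$, whence $w(\{L\subseteq V_{k-1}\})=\sum_{s\in\{\pi_1,\dots,\pi_{k-1}\}}x_s/q^{\,n-s-b_k(s)}$ and subtracting from $x_1+\cdots+x_n$ gives the denominator in \cref{theorem.sd flags}; likewise a line in $V_k\setminus V_{k-1}$ has a nonzero $e_{\pi_k}$-coefficient, and grouping such lines by leading coordinate $s\le\pi_k$ — with an extra factor $q-1$ when $s<\pi_k$, since then the $e_s$-coefficient must itself be chosen nonzero — yields $w(\{L\subseteq V_k,\ L\not\subseteq V_{k-1}\})=(x_1+\cdots+x_n)\,f(\pi_1,\dots,\pi_k)$, the stated numerator. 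Because these weights depend on $F$ only through the multiset of leading coordinates of lines in each $V_k$, which is an invariant of the double coset $[\pi]$ — equivalently, $\hatTqx$ is $\bB$-equivariant and $\bB$ acts transitively on the cosets comprising $[\pi]$, so the unique stationary distribution is constant there — this simultaneously establishes the first assertion $\Psi(q,\x)_F=\Psi(q,\x)_{F'}$ for $F,F'\in[\pi]$.

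I expect the main obstacle to be the precise bookkeeping on the semigroup side: verifying that the $k$-th level of the Karnofsky--Rhodes/McCammond expansion of $\mathcal{F}_n(q)$ contributes exactly the ratio above rather than some reindexed or reweighted variant. A more hands-on alternative that sidesteps the expansion is to verify the eigenvector equation $\Psi(q,\x)\cdot\hatTqx=(x_1+\cdots+x_n)\,\Psi(q,\x)$ directly from~\eqref{eq:r2tflagsweighted}: the flags $F$ mapping to a fixed $F'=(V_0'\subseteq\cdots\subseteq V_n')$ with positive probability are precisely those obtained, for some $1\le j\le n$, by keeping $V_i'$ for $i\ge j$ and replacing the bottom subspaces by a chain $W_1\subseteq\cdots\subseteq W_{j-1}$ with $W_{i-1}+V_1'=V_i'$ and $V_1'\not\subseteq W_i$, each arising with the common transition weight $w_{V_1'}$; one then collapses $\sum_{j=1}^n w_{V_1'}\sum_{F\mapsto F'\text{ via }j}\Psi(q,\x)_F$ to $\Psi(q,\x)_{F'}$ using the product form above and a telescoping identity over flag chains (already for $n=2$ this reduces to $\sum_{\text{lines }L}w_L=1$).

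Finally, the normalization $\sum_{F\in\mathfrak F}\Psi(q,\x)_F=1$ when $x_1+\cdots+x_n=1$ comes for free once the formula is established: in that regime $\hatTqx$ is a stochastic matrix, and the chain is irreducible (its transitions are those of the unweighted chain $\hatTq$ of~\cite{qr2r}, so any flag is reachable from any other by successively inserting suitable lines); hence $\Psi(q,\x)$ is the unique stationary distribution of a finite irreducible Markov chain, and therefore its entries sum to $1$.
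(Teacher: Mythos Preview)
Your proposal is correct and follows essentially the same route as the paper: both realize $\hatTqx$ as the left-regular-band walk on $\mathcal{F}_n(q)$ and compute $\Psi(q,\x)_{F_\pi}$ as a product over levels, where the $k$-th numerator is the total weight of lines carrying $V_{k-1}$ to $V_k$ (yielding $(x_1+\cdots+x_n)f(\pi_1,\dots,\pi_k)$ by exactly your line count) and the $k$-th denominator is one minus the stabilizer weight, with constancy on $[\pi]$ coming from $\bB$-equivariance. The only minor divergence is irreducibility: the paper routes through the projection to $\symm_n$ and the singleton double coset $[w_0]$, whereas your direct observation that any target flag is reached after $n$ suitably chosen line insertions is equally valid and arguably cleaner.
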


Unlike in the $q$-Tsetlin libraries on permutations and words, it is not
straightforward to prove that the $q$-Tsetlin library on cosets is irreducible.
In \cref{ss:flags sd}, we will first prove the existence part of \cref{theorem.sd flags}. We will then rely on the 
projection in \cref{theorem.topcommutativediagram} to complete the proof of uniqueness.

%%%%%%%%%%%%%%%%%%%%%%%%%%%%%%%%%%%%%%%%%%%%%%%%%%%%%%%%%%%
\subsubsection{Eigenvalues}

The random-to-top Markov chain generated by $\hatTq$ on flags has nice eigenvalues by the results of Brown~\cite{Brown}; see also \cite{BraunerComminsReiner}. Using results in  \cite{Brown, bidigare_hanlon_rockmore.1999}, we compute the eigenvalues of the transition matrix of $\hatTqx$.

Recall the \emph{$q$-derangement numbers}~\cite{wachs-1989} given by
\begin{equation}\label{eq:qderangements}
d_n(q) = [n]_q! \sum_{k=0}^n \frac{(-1)^k}{[k]_q!} q^{\binom{k}{2}},
\end{equation}
where $[n]_q=1+q+q^2+\cdots+q^{n-1}$ as in~\eqref{equation.q integer}. When $q = 1$, this reduces to the formula for the usual derangement 
numbers given in \eqref{derange}. 

\begin{theorem}
\label{cor.eigenvalues R2T}
The eigenvalues of the $q$-Tsetlin library Markov chain generated by $\hatTqx$ on flags are as follows. For every subset $S \subseteq [n]$, written as 
$S = \{i_1 > i_2 > \cdots > i_k\}$, there is an eigenvalue
\[
\lambda_S(q,\x) = \sum_{j=1}^k \frac{x_{i_j}}{q^{n-i_j-j+1}}
\]
with (possibly null) multiplicity given by $d_{n-k}(q) q^{(n-i_1) + (n-1-i_2) + \cdots + (n-k+1-i_k)}$ when $k \neq n$, and with multiplicity $1$ when $k = n$. Moreover, $\hatTqx$ is diagonalizable whenever $q$ is a power of a prime number.
\end{theorem}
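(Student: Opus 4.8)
The plan is to exploit the identification, promised in the text following \eqref{eq:weightedLRBflags}, of $\hatTqx$ with a weighted random-walk operator on the $q$-free left regular band $\mathcal{F}_n(q)$ whose generators are the $[n]_q$ lines of $\F_q^n$, and then to invoke the Bidigare--Hanlon--Rockmore / Brown spectral theory for left regular band walks~\cite{bidigare_hanlon_rockmore.1999,Brown}. In that theory, once one writes the step distribution as a probability measure on the generators, the eigenvalues of the transition operator are indexed by the \emph{flats} (here: the sub-semilattice of $\mathcal{F}_n(q)$, which as established by Brown is the lattice of subspaces of $\F_q^n$, equivalently the intersection lattice), and the eigenvalue attached to a subspace $U \leqslant \F_q^n$ is the total weight of all generating lines lying in $U$. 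So the first step is to identify the flat lattice of $\mathcal{F}_n(q)$ with the subspace lattice of $\F_q^n$, and to show that the subspaces contributing a \emph{distinct} eigenvalue are exactly the coordinate subspaces $U_S = \langle e_i : i \in S\rangle$ for $S \subseteq [n]$: indeed by \eqref{eq:weightfunctionlines} a line $L = \langle e_i + \sum_{k>i} c_k e_k\rangle$ has weight $x_i/q^{n-i}$, and summing over all lines contained in $U_S$—for $S = \{i_1 > \cdots > i_k\}$, the lines in $U_S$ with leading index $i_j$ number $q^{\,|\{i \in S : i < i_j\}|} = q^{\,k-j}$—gives precisely
\[
\sum_{j=1}^k q^{\,k-j}\,\frac{x_{i_j}}{q^{\,n-i_j}} = \sum_{j=1}^k \frac{x_{i_j}}{q^{\,n-i_j-j+1}} = \lambda_S(q,\x),
\]
matching the claimed formula. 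For a non-coordinate subspace one checks the total weight again collapses to some $\lambda_S$ (with $S$ the set of leading indices of a reduced basis), so no new eigenvalues arise; the distinct eigenvalues are exactly $\{\lambda_S(q,\x) : S \subseteq [n]\}$.

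The second step is the multiplicity computation. By the Brown–BHR formula, the multiplicity of the eigenvalue indexed by a flat $U$ equals a sum of Möbius-type contributions over flats refining $U$; for the specific left regular band $\mathcal{F}_n(q)$ this was worked out in~\cite{Brown} (and in~\cite{BraunerComminsReiner} for $\hatTq$) and yields, for the coordinate subspace $U_S$ of dimension $k$, a contribution of $q$-derangement-number type from the quotient arrangement $\F_q^n / U_S \cong \F_q^{\,n-k}$, namely $d_{n-k}(q)$, weighted by the number of ways of choosing a complementary flag datum, which is the power of $q$ appearing in the statement. Concretely, I would show that the multiplicity is $d_{n-k}(q)$ times the number of subspaces (or of the relevant band elements) sitting ``transversally'' to $U_S$, and a direct count of the lines with leading indices outside $S$ compatible with $U_S$ gives the exponent $(n-i_1)+(n-1-i_2)+\cdots+(n-k+1-i_k)$; when $k=n$ the space $\F_q^{\,0}$ is trivial, $d_0(q)=1$, and the exponent is $0$, recovering the top eigenvalue $\lambda_{[n]}(q,\x)=x_1+\cdots+x_n$ with multiplicity $1$.

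The third step is diagonalizability. Brown's theorem says the transition operator of a left regular band walk is diagonalizable provided every generator appears with positive weight (equivalently, the random walk is ``separating''); here the weights are $x_i/q^{n-i}$, and when $q$ is a prime power these are genuine, strictly positive real numbers for all $i$ (given $x_i>0$), so every line is assigned positive mass and the hypotheses of Brown's diagonalizability criterion are met. One should also verify that the sum of the listed multiplicities over all $S \subseteq [n]$ equals $[n]_q!$, the dimension of $\C[G/B]$; this follows from the $q$-analogue of the identity $\sum_{k} \binom{n}{k} d_{n-k} = n!$ after bookkeeping the $q$-powers—this is essentially the decomposition of the regular representation of the left regular band and can be cited from~\cite{Brown}. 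The main obstacle I anticipate is not the eigenvalue identification (that is a clean generating-line count) but rather pinning down the exact multiplicity exponent: one must be careful that the flat lattice contributions are organized by \emph{which} coordinate subspace a band element ``projects to,'' and that the $q$-derangement weighting $d_{n-k}(q)$ together with the transversal count $q^{\sum (n-j+1-i_j)}$ exhaust every band element exactly once. I would handle this by setting up an explicit bijection between a basis-of-eigenvectors index set and pairs (coordinate subspace $U_S$, transversal configuration) and checking it is compatible with the left regular band structure, deferring to~\cite{Brown,bidigare_hanlon_rockmore.1999} for the underlying spectral decomposition theorem.
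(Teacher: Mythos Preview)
Your approach is essentially the paper's: both reduce to Brown's spectral theory for the left regular band $\mathcal{F}_n(q)$, compute the eigenvalue attached to a subspace as the total line-weight it contains, and observe that this depends only on the set $S$ of leading indices of a reduced basis.

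The one place where you make life harder than necessary is the multiplicity. You describe it via ``M\"obius-type contributions over flats refining $U$'' and ``transversal configurations,'' whereas the paper uses a cleaner packaging: Brown's theorem (as specialized in \cite[Section~5.2]{Brown}) already hands you, flat by flat, $m_V = d_{n-\dim V}(q)$ for \emph{each} individual subspace $V$. The total multiplicity of $\lambda_S(q,\x)$ is then simply $d_{n-k}(q)$ times the number of $k$-dimensional subspaces $V$ with $\lambda_V = \lambda_S$. That count is what produces the power of $q$: one checks $\lambda_V=\lambda_S$ iff $V$ has a basis of the shape $v_{i_j}=e_{i_j}+\sum_{\ell>i_j,\,\ell\notin S} c_\ell e_\ell$, and there are $q^{(n-i_1)+(n-1-i_2)+\cdots+(n-k+1-i_k)}$ such subspaces. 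So no bijection with ``pairs (coordinate subspace, transversal configuration)'' is needed, and the dimension check $\sum_S m_{G/B}(S)=[n]_q!$ is automatic. Your eigenvalue computation and diagonalizability argument match the paper's.
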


The proof of \cref{cor.eigenvalues R2T} is given in \cref{ss:eigenvalue flags}.
 
\begin{example} 
\label{example.q=2 n=3 flag}
Let $q=2$, $n=3$ and $x_1+ x_2 + x_3=1$. There are $2^3-1=7$ lines and $[3]_2! = 21$ flags. 
\cref{theorem.sd flags} thus gives:
\[
\begin{split}
\Psi(q,\x)_{[321]} &= \frac{x_3 x_2 x_1}{(1-x_3)(1-x_2-x_3)} = \frac{x_2 x_3}{x_1+x_2},\\
\Psi(q,\x)_{[312]} &= \frac{x_3 \frac{x_1}{2}\ (x_2+\frac{x_1}{2})}{(1-x_3)(1-\frac{x_1}{2}-x_3)} = \frac{x_1 x_3}{2(x_1+x_2)},\\
\Psi(q,\x)_{[231]} &= \frac{\frac{x_2}{2}(\frac{x_2}{2}+x_3)}{x_1+\frac{x_2}{2}+x_3} = \frac{x_2 (x_2+2x_3)}{2(2x_1+x_2+2x_3)},\\
\Psi(q,\x)_{[213]} &= \frac{\frac{x_1}{2} \frac{x_2}{2}}{x_1+\frac{y_2}{2}+x_3} = \frac{x_1x_2}{2(2x_1+x_2+2x_3)},\\
\Psi(q,\x)_{[132]} &= \frac{\frac{x_1}{4} (\frac{x_1}{4}+x_3)}{3\frac{x_1}{4}+x_2+x_3} = \frac{x_1(x_1+4x_3)}{4(3x_1+4x_2+4x_3)},\\
\Psi(q,\x)_{[123]} &= \frac{\frac{x_1}{4} (\frac{x_1}{4}+\frac{x_2}{2})}{3\frac{x_1}{4}+x_2+x_3} = \frac{x_1(x_1+2x_2)}{4(3x_1+4x_2+4x_3)},
\end{split}
\]
which, up to $q^{|[\pi]|}$ is equal to the stationary distribution given in \eqref{eq:psiqexample} specialized to $q=2$. The eigenvalues and 
their multiplicities are given in the following table according to \cref{cor.eigenvalues R2T}:
\[
\begin{array}{|c|c|}
\hline
    \text{Eigenvalues} & \text{Multiplicities} \\
    \hline
    0 & d_3(2) = 6 \\[0.2cm]
\displaystyle    \frac{x_1}{4} & d_2(2) 2^{3-1} = 8 \\[0.2cm]
\displaystyle    \frac{x_2}{2} & d_2(2) 2^{3-2} = 4 \\[0.2cm]
    x_3 & d_2(2) 2^{3-3} = 2 \\[0.2cm]
    1 = x_1 + x_2 + x_3 & 1 \\
    \hline
\end{array}
\]
\end{example}

%%%%%%%%%%%%%%%%%%%%%%%%%%%%%%%%%%%%%%%%%%%%%%%%%%%%%%%%%%%
\subsection{Convergence to stationarity}
\label{section.mixing}

In this section, we bound the rate at which the $q$-Tsetlin library on permutations and flags converges to its stationary distribution. 
A standard spectral notion is the \emph{relaxation time}~\cite[Section~12.2]{LevinPeresWilmer} given by the reciprocal of the 
\textit{spectral gap}, i.e. $1 - \lambda_2$, where $\lambda_2$ is the second-largest eigenvalue.
From \cref{thm:evalues Sn}, we obtain the following result.

\begin{cor}
\label{cor:relax}
Let 
\[
\lambda^* = \max_{\substack{S \subseteq [n] \\ |S| = n - 2}}\left\{ \lambda_S \right\}.
\]
Then the relaxation time is $t_{\text{rel}} = (1 - \lambda^*)^{-1}$. Furthermore, $\lambda^*$ is simple for special choices of $x_i$. We have
\begin{equation}
\label{eq:qspectralgap}
\begin{array}{ll}
\displaystyle
\lambda^* = \frac{[n-2]_q}{[n]_q} \qquad & \displaystyle x_i=\frac{q^{n-i}}{[n]_q},\\[5mm]   
\displaystyle \lambda^* = \frac{n-2}{n} &  \displaystyle x_i=\frac{1}{n}.
\end{array}
\end{equation}
\end{cor}

The proof of~\cref{cor:relax} is given in~\cref{section.convergence}.

\begin{remark}
In the case of $x_i=q^{n-i}/[n]_q$ the relaxation time determined by \eqref{eq:qspectralgap} converges to a constant, 
\[
t_\text{rel} \to 1/(1-q^{-2})\quad \text{as}\quad n\to\infty.
\]
In contrast, for $x_i=1/n$ the relaxation time is equal to that of the standard $q=1$ Tsetlin library with equal rates, for which the eigenvalues are known \cite{Phatarfod.1991}, and diverges linearly with $n$,
\[
t_{\text{rel}} = \frac{n}{2} \to \infty\quad \text{as}\quad n\to\infty.
\]
\end{remark}

The \textit{mixing time} measures the time required by a Markov chain for the distance to stationarity to be small, which we now define.
The \textit{total variation distance} between two probability distributions $\mu$ and $\nu$ on the same space $\Omega$ is
\begin{equation}
\label{tv dist}
\|\mu - \nu \| = \frac{1}{2} \sum_{\omega \in \Omega} |\mu(\omega) - \nu(\omega)|.
\end{equation}
We want to measure the worst total variation distance starting from any initial permutation to the stationary distribution $\Psi(q, \x)$ of the $q$-Tsetlin 
library on permutations with transition matrix $\T$. Let
\begin{equation}
\label{eq:dtdef}
d(t) = \max_{\omega \in \Omega} \| \T^t(\omega, \cdot) - \Psi \|,
\end{equation}
where $\T^t(\omega, \cdot)$ is the distribution of the chain starting with state $\omega$.
Fix a positive real number $\epsilon < 1/2$ (for example, $\epsilon=\frac{1}{4}$). The \emph{(total variation) mixing time} is then
\begin{equation}
t_{\text{mix}} \equiv t_{\text{mix}}(\epsilon) 
= \min \{t \geqslant 1 \mid d(t) \leqslant \epsilon \}.
\end{equation}
It is well-known that the mixing time is related to the 
relaxation time by~\cite[Section~12.2]{LevinPeresWilmer}:
\begin{equation}
\label{eq:tmixbound}
\log \left( \frac{1}{2\epsilon} \right) (t_{\text{rel}} - 1) \leqslant
t_{\text{mix}} 
\leqslant \log \left( \frac{1}{\epsilon \min(\Psi(q, \x))} \right) t_{\text{rel}}.
\end{equation}
The total variation mixing time of the Tsetlin library for equal weights is $\Theta(n \log n)$, see~\cite[Theorem 1.1, Remark 1]{diaconisICM}.
(See also~\cite{Fill.1996,Nestoridi.2019} for further results on mixing times for the Tsetlin library and generalizations.)

We give an upper bound on the total variation mixing time for the $q$-Tsetlin library on flags and permutations,
which will be proved in~\cref{section.convergence}.

\begin{theorem}
\label{theorem.mixing}
Let $p>0$. The total variation mixing time of the $q$-Tsetlin library with weights $x_i=p^{n-i}/[n]_p$ on flags over $\mathbb{F}_q^n$, as well as on permutations, is 
\begin{enumerate}
\item $O(n)$ for $p,q>1$;
\item $O(p^n)$ for $p>q=1$;
\item $O(n \log n)$ for $q\geqslant p=1$;
\item $O(p^{-n})$ for $q\geqslant 1>p>0$.
\end{enumerate}
\end{theorem}

\begin{remark}
\cref{theorem.mixing} demonstrates that there are phase transitions for the behavior of the total variation mixing time, in particular since the
mixing time for $q=p=1$ is sharp by~\cite{diaconisICM}. The behavior of the mixing time with respect to $q$ and $p$ is illustrated in 
Figure~\ref{figure.mixing}. We conjecture that these bounds are sharp.
\end{remark}

\begin{figure}[h]
\begin{tikzpicture}[x=4cm, y=3cm,scale=0.7]

    % --- 1. Define Coordinates and Paths ---
    \coordinate (Origin) at (1,0);
    \coordinate (MaxQ) at (3,0);
    \coordinate (MaxP) at (1,2.5);
    \coordinate (Split) at (1,1);
    \coordinate (RightEnd) at (3,1);

    % --- 2. Shading/Patterns ---
    % Upper region: O(n)
    \fill[pattern={north west lines}, pattern color=orange!60] 
        (1,1) rectangle (3,2.5);
    
    % Lower region: O(p^-n)
    \fill[pattern={north west lines}, pattern color=red!60] 
        (1,0) rectangle (3,1);

    % --- 3. Axes ---
    % Q-axis (Changed Box to a simple vertical bar line |)
    \draw[{Bar[]}-Stealth, thick, black!70!black] (1,0) -- (MaxQ) node[below right] {$q$};
    % P-axis
    \draw[{Bar[]}-Stealth, thick, green!50!black] (1,1) -- (MaxP) node[left] {$p$};
    \draw[thick, red!60!black] (1,0) -- (1,1);

    % --- 4. Boundary Lines and Points ---
    % Horizontal separator: O(n log n)
    \draw[very thick, blue!80!black] (1,1) -- (RightEnd) node[right] {$O(n \log n)$};

    % Critical Point (The dot at p=1, q=1)
    \fill[blue!80!black] (1,1) circle (3pt);
    \node[left, blue!80!black] at (1,1) {$1$};

    % --- 5. Labels ---
    % Origin/Ticks
    \node[below] at (1,0) {$1$};
    \node[left] at (1,0) {$0$};

    % Region Labels
    \node[orange!80!black, font=\large] at (1.7, 1.7) {$O(n)$};
    \node[red!80!black, font=\large] at (2.0, 0.5) {$O(p^{-n})$};
    \node[green!40!black, anchor=west, font=\large] at (0.55, 2.0) {$O(p^n)$};

\end{tikzpicture}
\caption{The behavior of the mixing time of~\cref{theorem.mixing} with respect to $q$ and $p$.
\label{figure.mixing}}
\end{figure}

%%%%%%%%%%%%%%%%%%%%%%%%%%%%%%%%%%%%%%%%%%%%%%%%%%%%%%%%%%%
\section{Relationship between the various $q$-Tsetlin libraries}
\label{section.flags}
%%%%%%%%%%%%%%%%%%%%%%%%%%%%%%%%%%%%%%%%%%%%%%%%%%%%%%%%%%%
Having defined the three processes $\Tqx$, $\wordTqx$ and $\hatTqx$, that act on permutations, words and flags, respectively, 
we now wish to understand their relationship to each other. 

In  \cref{subsection.commutativediagram}, we revisit the diagram in \cref{figure.commuting diagram} and 
\begin{enumerate}
\item rigorously define the projection and inclusion maps, and 
\item prove that the diagram commutes.
\end{enumerate}
This will allow us to precisely understand the relationship between the operators $\hatTqx$, $\Tqx$ and $\wordTqx$ defined 
in \cref{section.results}. Consequences of having such a commutative diagram, including various lumping statements, 
are made in \cref{subsection.diagramconsequences}. Finally, in \cref{ss:Brown LRB}, we explain the connection between $\hatTqx$ and semigroup theory.

%%%%%%%%%%%%%%%%%%%%%%%%%%%%%%%%%%%%%%%%%%%%%%%%%%%%%%%%%%%
\subsection{Definitions of the inclusion and projection maps}
\label{subsection.commutativediagram}
We will analyze the two layers of the commutative diagram in \cref{figure.commuting diagram} separately. 

%%%%%%%%%%%%%%%%%%%%%%%%%%%%%%%%%%%%%%%%%%%%%%%%%%%%%%%%%%%
\subsubsection{Commutative diagram between flags and permutations}

Recall from \cref{section.qTsetlin on flags} that the set $G/B$ can be decomposed into double cosets $[\pi]:=\bB \pi B$ for $\pi \in \symm_n$. 
These double cosets are at the core of the following maps. 

\begin{defn}
    Define the \textit{projection}
    \begin{align}
        \proj_{\symm_n}\colon  \C(q,\x)[G/B] &\longrightarrow \C(q,\x)[\symm_n] 
        &&\text{by mapping}\\
        gB &\longmapsto \pi \nonumber 
        &&\text{for } gB \in [\pi]
    \end{align}
    and \textit{inclusion}
    \begin{align}
        \incl_{G/B}\colon \C(q,\x)[\symm_n] &\longrightarrow \C(q,\x)[G/B]
        \qquad \text{by mapping}\\
        \pi &\longmapsto q^{\inv(\pi)} \sum_{gB \in [\pi]} gB. \nonumber
    \end{align}
\end{defn}
\begin{remark}
    Note that in this context, $q$ is a fixed power of a prime number, and thus $\C(q) = \C$. However, we write $\C(q,\x)$ rather than $\C(\x)$ for consistency. 
    Also note that $\proj_{\symm_n}(q) = q = \incl_{G/B}(q)$.
\end{remark}
\begin{example}
Consider $\pi = 213$, and suppose $q = 2$. Then, using \cref{ex:doublecosets}, we have that 
\[ [213] = \left\{ \left.
\begin{pmatrix} 0 & 1 & 0 \\ 1 & 0 & 0 \\ \alpha & \beta & 1
\end{pmatrix} B\, \right| \alpha, \beta \in \mathbb{F}_q
\right\}. \]
Therefore at $q=2$,
\[ \incl_{G/B}(213) = q \cdot \left(  \begin{pmatrix} 0 & 1 & 0 \\ 1 & 0 & 0 \\ 0 & 0 & 1
\end{pmatrix} B + \begin{pmatrix} 0 & 1 & 0 \\ 1 & 0 & 0 \\ 1 & 0 & 1
\end{pmatrix} B + \begin{pmatrix} 0 & 1 & 0 \\ 1 & 0 & 0 \\ 0 & 1 & 1
\end{pmatrix} B + \begin{pmatrix} 0 & 1 & 0 \\ 1 & 0 & 0 \\ 1 & 1 & 1
\end{pmatrix} B \right).\]
We stress that the above cannot be further simplified as it is an expression in $\C[G/B]$ and so addition and multiplication is in relation to cosets.
\end{example}

\begin{theorem}
\label{theorem.topcommutativediagram}
With $\proj_{\symm_n}$ and $\incl_{G/B}$ as above, the following diagrams commute:
\[
\begin{tikzcd}[column sep=huge, row sep=huge]
\C(q,\x)[G/B] \arrow[d, swap, "\proj_{\symm_n}"]  \arrow[r,  "\hatTqx" ]  
& \C(q,\x)[G/B]  \arrow[d, "\proj_{\symm_n}"] \\
\C(q,\x)[\symm_n]   \arrow[r,  "\Tqx" ] 
&  \C(q,\x)[\symm_n] 
\end{tikzcd}
\qquad \qquad 
\begin{tikzcd}[column sep=huge, row sep=huge]
\C(q,\x)[G/B] \arrow[r,  "\hatTqx"] 
& \C(q,\x)[G/B] \\
\arrow[u, hook, "\incl_{G/B}"] \C(q,\x)[\symm_n] \arrow[r,  "\Tqx" ]
&  \C(q,\x)[\symm_n]. \arrow[u, hook, swap, "\incl_{G/B}"] 
\end{tikzcd} 
 \]
\end{theorem}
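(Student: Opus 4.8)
The plan is to factor each $q$-Tsetlin library as a composition of the ($\x$-free) $q$-random-to-top operator and the weighting operator, namely $\hatTqx = \hatTq\,\mathcal{X}_{G/B}(q,\x)$ and $\Tqx = \Tq\,\X(q,\x)$, and to prove separately the two ``halves'' of each square: (i) that $\proj_{\symm_n}$ and $\incl_{G/B}$ intertwine the Iwahori action of each Hecke generator $T_i$ on $\C(q,\x)[G/B]$ with the action~\eqref{eq:heckeonperm} of $T_i$ on $\C(q,\x)[\symm_n]$, and (ii) that they intertwine $\mathcal{X}_{G/B}(q,\x)$ with $\X(q,\x)$. Granting (i) and (ii), the theorem follows formally: since $\hatTq$ and $\Tq$ are the images under the two respective actions of the single algebra element $\mathcal{T}(q) = \frac{1}{[n]_q}\sum_i T_{i-1}\cdots T_1$ of~\eqref{eq:TdefHecke}, fact (i) propagates through the products and linear combinations defining $\mathcal{T}(q)$ to give $\proj_{\symm_n}(v\cdot\hatTq) = \proj_{\symm_n}(v)\cdot\Tq$ and $\incl_{G/B}(w)\cdot\hatTq = \incl_{G/B}(w\cdot\Tq)$; composing with (ii) then yields both commuting squares.

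For (ii): if $gB \in [\pi] = \bB\pi B$, then the defining description of the distinguished coset representatives (see the discussion of double cosets in \cref{section.qTsetlin on flags} preceding \cref{ex:doublecosets}) forces the topmost nonzero entry of the first column of $g$ to be the distinguished $1$ in row $\pi_1$, so all further nonzero entries of that column lie in rows $>\pi_1$. Hence $gB\cdot\mathcal{X}_{G/B}(q,\x) = \frac{x_{\pi_1}}{q^{n-\pi_1}}\,gB$ by~\eqref{equation.XGB}, which involves the same scalar $\frac{x_{\pi_1}}{q^{n-\pi_1}}$ that appears in $\pi\cdot\X(q,\x)$ by~\eqref{defXq}. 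As this scalar depends only on $\pi$, both $\proj_{\symm_n}(u\cdot\mathcal{X}_{G/B}(q,\x)) = \proj_{\symm_n}(u)\cdot\X(q,\x)$ and $\incl_{G/B}(w)\cdot\mathcal{X}_{G/B}(q,\x) = \incl_{G/B}(w\cdot\X(q,\x))$ follow at once by linearity.

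For (i): by the Iwahori action~\eqref{eq:defheckeonflagaction}, a flag $F\in[\pi]$ has $F\cdot T_i$ equal to the sum of the $q$ flags obtained from $F$ by replacing $V_i$ with one of the $q$ other $i$-dimensional subspaces $W$ with $V_{i-1}\subseteq W\subseteq V_{i+1}$. A standard property of the Bruhat decomposition (\cite{Iwahori.1964}; see also \cite{HalversonRam} and \cite[\S4]{qr2r}) says that when $\pi_i>\pi_{i+1}$ all $q$ of these flags lie in $[\pi s_i]$, while when $\pi_i<\pi_{i+1}$ exactly one lies in $[\pi s_i]$ and the other $q-1$ lie in $[\pi]$; equivalently, writing $[\pi]$ also for the element $\sum_{gB\in\bB\pi B} gB$ of $\C(q,\x)[G/B]$, we have $[\pi]\cdot T_i = [\pi s_i]$ in the first case and $[\pi]\cdot T_i = q\,[\pi s_i] + (q-1)\,[\pi]$ in the second. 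Applying $\proj_{\symm_n}$, which sends each flag of $[\pi]$ to $\pi$, reproduces precisely the two cases of~\eqref{eq:heckeonperm}, so $\proj_{\symm_n}(F\cdot T_i) = \proj_{\symm_n}(F)\cdot T_i$. For $\incl_{G/B}$, substitute $\incl_{G/B}(\pi) = q^{\inv(\pi)}[\pi]$ into the two identities above and use $\inv(\pi s_i) = \inv(\pi) - 1$ when $\pi_i>\pi_{i+1}$ and $\inv(\pi s_i) = \inv(\pi) + 1$ when $\pi_i<\pi_{i+1}$; in each case the resulting combination of $q^{\inv(\pi s_i)}[\pi s_i]$ and $q^{\inv(\pi)}[\pi]$ is exactly $\incl_{G/B}$ applied to the corresponding right-hand side of~\eqref{eq:heckeonperm}, so the exponent $\inv(\pi)$ in the definition of $\incl_{G/B}$ is forced to be precisely this one. (Fact (i) for $\proj_{\symm_n}$ can alternatively be read off from the ``add a line to the front'' description~\eqref{eq:r2tflags} of $\hatTq$ by grouping the lines $L$ according to the double coset of the resulting flag.)

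The main obstacle is the combinatorial input in (i): pinning down how the $q$ intermediate subspaces $W$ split between the double cosets $[\pi]$ and $[\pi s_i]$, equivalently the convolution rule for the double-coset sums $[\pi]\cdot T_i$. This is classical, but the conventions need care --- the dichotomy is governed by descents of $\pi$ rather than ascents, reflecting the use of the opposite Borel $\bB$ (hence $\coinv$ rather than $\inv$ in $|[\pi]| = q^{\coinv(\pi)}$) --- as does the verification that $\incl_{G/B}$ carries exactly the normalizing power $q^{\inv(\pi)}$; everything else is formal. Finally, on the flag side $q$ is a fixed prime power, so $\C(q,\x)$ there is really $\C(\x)$; since all identities used above are uniform polynomial identities in $q$, this causes no difficulty.
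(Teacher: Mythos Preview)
Your proposal is correct and follows essentially the same route as the paper: factor $\hatTqx = \hatTq\,\mathcal{X}_{G/B}$ and $\Tqx = \Tq\,\X$, then show separately that $\proj_{\symm_n}$ and $\incl_{G/B}$ intertwine (i) each Hecke generator $T_i$ and (ii) the weight operators. The only substantive difference is that the paper proves the double-coset splitting in (i) --- your two identities $[\pi]\cdot T_i = [\pi s_i]$ and $[\pi]\cdot T_i = q[\pi s_i] + (q-1)[\pi]$ --- by an explicit matrix computation with Iwahori's formula~\eqref{Ti action cosets} (their \cref{lemma:actionofTiondoublecosets}), whereas you cite it as a standard Bruhat-decomposition fact; note that your word ``equivalently'' between the pointwise splitting and the sum identity hides a small uniformity argument (that the $q$ images evenly cover $[\pi s_i]$), which the paper's explicit computation supplies.
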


For ease of notation, we let $\proj = \proj_{\symm_n}$ and $\incl = \incl_{G/B}$ when it is clear from context which maps we are referring to.
We will prove \cref{theorem.topcommutativediagram} by proving that the inclusion and projection maps commute with both the action of 
$\HH_n(q)$ and the weight map $\mathcal{X}$.

We begin with the action of $\HH_n(q)$ on the space $\C[G/B],$ following work of Iwahori~\cite{Iwahori.1964} (see also Halverson--Ram~\cite{HalversonRam}). For each $1 \leqslant  i \leqslant n - 1$ and each $t \in \mathbb{F}_q$, let
\begin{equation}
\label{def hi}
f_i(t) := \begin{pmatrix}
 1 & \\
 & \ddots \\
 & & 1 & 0 \\
 & & t & 1 \\
 & & & & \ddots \\
 & & & & & 1
\end{pmatrix}
\end{equation}
be the matrix with $1$'s on the diagonal, $t$ in the $(i + 1, i)$ entry and zeros everywhere else. 
Using this, Iwahori~\cite{Iwahori.1964} proved that the right action of a generator $T_i$ in $\HH_n(q)$ is given by
\begin{equation}
\label{Ti action cosets}
(g B)T_i =gs_iB + \sum_{t\neq 0} g f_i(t) B.
\end{equation}

We would like to understand how \eqref{Ti action cosets} interacts with the partition of $G/B$ by double cosets. \cref{lemma:actionofTiondoublecosets} is likely implicit in the literature on $\HH_n(q)$, but we include it here for completeness. 
\begin{lemma}\label{lemma:actionofTiondoublecosets}
Let $q$ be a prime power, $\sigma \in \symm_n$ and suppose $hB \in [\sigma]$. Then for $1 \leqslant i \leqslant n-1$, we have
\begin{equation}
\label{eq:tioncosetrep}
(hB) \cdot T_i = \begin{cases}
\displaystyle   \sum_{j = 1}^q g_j B \textrm{ where $g_jB \in [\sigma s_i]$ and distinct for $j\in\{1,\ldots,q\}$} 
& \text{if $\sigma_{i+1} < \sigma_i$,} \\
&\\
\displaystyle   gB + \sum_{j = 1}^{q-1} f_j(t)B \textrm{ for some $gB \in [\sigma s_i]$ and distinct $f_j(t)B \in [\sigma]$} 
& \text{if $\sigma_{i+1} > \sigma_i$.}
    \end{cases}
\end{equation}
Moreover, 
\begin{equation} \label{eq:tionsumofcosets}
\left( \sum_{hB \in [\sigma]} hB \right) \cdot T_i = \begin{cases}
\displaystyle  \sum_{gB \in [\sigma s_i]} gB & \text{if $\sigma_{i+1} < \sigma_i$,} \\
&\\
\displaystyle  q \left(\sum_{gB \in [\sigma s_i]} gB \right)+ (q-1) \left( \sum_{hB \in [\sigma ]} hB\right) 
& \text{if $\sigma_{i+1}> \sigma_i$.}
\end{cases}
\end{equation}
\end{lemma}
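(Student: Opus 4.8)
The plan is to work directly with the coset-representative description from \eqref{eq:cosetreps} and track how right multiplication by $T_i$, expanded via Iwahori's formula \eqref{Ti action cosets}, moves a representative $hB \in [\sigma]$ between the double cosets $[\sigma]$ and $[\sigma s_i]$. First I would recall the combinatorial characterization of $\bB\sigma B$: a matrix lies in $[\sigma]$ precisely when its rightmost nonzero entries in each row trace out the permutation matrix of $\sigma$, and each such entry is the topmost nonzero entry in its column. Applying $T_i$ means replacing $h$ by $hs_i$ (swap columns $i$ and $i+1$) and by $hf_i(t)$ for $t \neq 0$ (add $t$ times column $i$ to column $i+1$), then row/column reducing each resulting matrix back to a distinguished representative in $\mathfrak{G}$ and reading off which double coset it lands in. The key point is that swapping columns $i,i+1$ of a matrix in $[\sigma]$ produces a matrix in $[\sigma s_i]$, while the $q-1$ column operations $hf_i(t)$ either stay in $[\sigma]$ or land in $[\sigma s_i]$ depending on whether the leading entries in columns $i$ and $i+1$ "collide" — and this is governed exactly by the sign of $\sigma_{i+1}-\sigma_i$.

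Concretely, I would split into the two cases. When $\sigma_{i+1} < \sigma_i$: the pivot of row $\sigma^{-1}(i+1)$ sits in column $i+1$ and lies in a row below the pivot in column $i$; the column operations $hf_i(t)$ add a multiple of column $i$ (whose relevant entry is in a higher row, hence zero in the pivot row of column $i+1$) so they do not disturb the pivot structure — one checks each of $hs_i$ and the $q-1$ matrices $hf_i(t)$ reduces to a representative of $[\sigma s_i]$, and that these $q$ representatives are distinct (they differ in a fixed entry). When $\sigma_{i+1} > \sigma_i$: here $hs_i$ lands in $[\sigma s_i]$, but for the $hf_i(t)$ the addition of column $i$ changes the leading entry of column $i+1$ for at most one value of $t$ (the one making a cancellation), and for the remaining $q-1$ values the matrix reduces back into $[\sigma]$; I would identify these as the $f_j(t)B$ in the statement and verify distinctness. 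This establishes \eqref{eq:tioncosetrep}.

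For \eqref{eq:tionsumofcosets} I would sum \eqref{eq:tioncosetrep} over all $hB \in [\sigma]$. Since $|[\sigma]| = \coinv(\sigma)$ and $|[\sigma s_i]|$ differs from it by a factor related to the transposition, the bookkeeping is: in the case $\sigma_{i+1}<\sigma_i$ each element of $[\sigma s_i]$ is hit exactly once (a counting check, using $|[\sigma]| = q\,|[\sigma s_i]|$ in this case), giving $\sum_{gB\in[\sigma s_i]} gB$; in the case $\sigma_{i+1}>\sigma_i$ each element of $[\sigma s_i]$ is hit $q$ times and each element of $[\sigma]$ is hit $q-1$ times, giving the claimed combination. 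Alternatively — and this may be cleaner — one can deduce \eqref{eq:tionsumofcosets} more quickly by noting that $\sum_{hB\in[\sigma]}hB$ is, up to the scalar $q^{\inv(\sigma)}$, the image $\incl_{G/B}(\sigma)$, so \eqref{eq:tionsumofcosets} is equivalent to the statement that $\incl_{G/B}$ intertwines the Hecke action on $\C(q,\x)[\symm_n]$ with that on $\C(q,\x)[G/B]$, matched against \eqref{eq:heckeonperm}; but since \cref{theorem.topcommutativediagram} is what we are building toward, I would instead give the direct summation argument here to avoid circularity.

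The main obstacle I anticipate is the careful case analysis in the second case of \eqref{eq:tioncosetrep}: verifying that exactly one of the $q$ operations $h \mapsto hs_i$, $h\mapsto hf_i(t)$ produces a collision that pushes the matrix into $[\sigma s_i]$ while the other $q-1$ return to $[\sigma]$, and doing the reduction to distinguished representatives cleanly enough to see distinctness. Getting the pivot-tracking right — specifically, checking which entries of which rows can be cleared by subsequent column operations without affecting the double-coset label — is the delicate part; everything else is bookkeeping with $\coinv$ and the structure of $\mathfrak{G}$.
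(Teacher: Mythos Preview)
Your overall approach matches the paper's: expand via Iwahori's formula \eqref{Ti action cosets}, split into the two cases, reduce $hs_iB$ and each $hf_i(t)B$ to a distinguished representative in $\mathfrak{G}$, then sum over $hB\in[\sigma]$ for \eqref{eq:tionsumofcosets}. However, the pivot-tracking in your plan is off in a way that would derail the case analysis.

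The central slip is the direction of the column operation: since $f_i(t)$ has its off-diagonal entry at position $(i+1,i)$, right-multiplication $h\mapsto hf_i(t)$ replaces column $i$ by (column $i$)${}+t\cdot{}$(column $i+1$), not the reverse. (Your stated direction would make $f_i(t)$ upper triangular, hence in $B$, so $hf_i(t)B=hB$ trivially.) With the correct direction, in the case $\sigma_{i+1}<\sigma_i$ the pivot of column $i+1$ sits in row $\sigma_{i+1}$, which lies \emph{above} (not below) the pivot row $\sigma_i$ of column $i$; adding $t$ times column $i+1$ to column $i$ therefore creates a nonzero entry $t$ above the old pivot of column $i$, and after reduction the pivot pattern is that of $\sigma s_i$, with a nonzero free entry at $(\sigma_i,i)$ distinguishing the $q-1$ terms from $hs_iB$. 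In the case $\sigma_{i+1}>\sigma_i$ no ``cancellation'' value of $t$ needs to be singled out: the entry $h_{\sigma_i,i+1}$ is zero (it lies above the pivot of column $i+1$), so the pivot at $(\sigma_i,i)$ survives and all $q-1$ of the $hf_i(t)$ remain in $[\sigma]$, differing only in the free entry at $(\sigma_{i+1},i)$. Finally, your size comparison for \eqref{eq:tionsumofcosets} is inverted: when $\sigma_{i+1}<\sigma_i$ one has $\coinv(\sigma s_i)=\coinv(\sigma)+1$, so $|[\sigma s_i]|=q\,|[\sigma]|$, which is exactly what lets the $q\cdot|[\sigma]|$ terms cover $[\sigma s_i]$ once each --- though to upgrade the count to a bijection you still need the observation (as in the paper) that the $q$ cosets arising from a fixed $hB$ differ only in a single free entry.
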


\begin{proof}
First, suppose $\sigma_{i+1} < \sigma_i$. Then $h$ in \eqref{eq:tioncosetrep} necessarily has the form
\[
h = \begin{blockarray}{ccccc}
& & i & i+1 & \\
\begin{block}{c(cccc)}
& & \vdots & \vdots & \\
\sigma_{i+1} & \hdots & 0 & 1 & \\
& & \vdots & \vdots & \\
\sigma_i & \hdots & 1 & 0 & \\
& & \vdots & \vdots & \\
\end{block}
\end{blockarray}\;.
\]
The action of $T_i$ is given in \eqref{Ti action cosets}.
The first term is $h s_i B$, which is obtained by interchanging the columns $i$ and $i+1$ of $h$.
This is already an element in $[\sigma s_i]$.
Now consider the summand in the second term, namely 
$\sum_{j=1}^{q-1} h f_j(t) B$, where $f_i(t)$ is defined in \eqref{def hi}. Then,
\[
h f_i(t) = \begin{blockarray}{ccccc}
& & i & i+1 & \\
\begin{block}{c(cccc)}
& & \vdots & \vdots & \\
\sigma_{i+1} & \hdots & t & 1 & \\
& & \vdots & \vdots & \\
\sigma_i & \hdots & 1 & 0 & \\
& & \vdots & \vdots & \\
\end{block}
\end{blockarray}\;,
\]
which is not of the form of the distinguished coset representatives $\mathfrak{G}$ in \eqref{eq:cosetreps}. Because $t\neq 0$, multiplying by a suitable choice of upper triangular matrix in $B$ shows that 
\[
h f_i(t) B = \begin{blockarray}{ccccc}
& & i & i+1 & \\
\begin{block}{c(cccc)}
& & \vdots & \vdots & \\
\sigma_{i+1} & \hdots & 1 & 0 & \\
& & \vdots & \vdots & \\
\sigma_i & \hdots & \alpha & 1 & \\
& & \vdots & \vdots & \\
\end{block}
\end{blockarray} \; B,
\]
where $\alpha \neq 0$. Distinct values of $t$ necessarily 
lead to distinct values of $\alpha$, proving the first case of
\eqref{eq:tioncosetrep}. 
Thus $(h B)\cdot T_i$ is a sum of $q$ terms, where the only difference in the terms is the value $\alpha$ at position $(\sigma_i, i)$. 
Therefore, when we sum over all $hB \in [ \sigma ]$, we obtain all possible matrices $g B$, where $gB \in [\sigma s_i]$, proving the first case 
of \eqref{eq:tionsumofcosets}.

Now, suppose $\sigma_{i+1} > \sigma_i$. Then $h$ has the form
\[
h = \begin{blockarray}{ccccc}
& & i & i+1 & \\
\begin{block}{c(cccc)}
& & \vdots & \vdots & \\
\sigma_{i+1} & \hdots & 1 & 0 & \\
& & \vdots & \vdots & \\
\sigma_i & \hdots & * & 1 & \\
& & \vdots & \vdots & \\
\end{block}
\end{blockarray}\; ,
\]
where the element $*$ may or may not be zero. 
Again, $h s_i$ interchanges the columns $i$ and $i+1$, but this time $h s_i$ is not in $\mathfrak{G}$. However, there is a suitable choice of $b \in B$ such that $h s_i b = g$, where $gB \in [\sigma s_i]$ looks like
\begin{equation}
\label{eq:hsi}
g = \begin{blockarray}{ccccc}
& & i & i+1 & \\
\begin{block}{c(cccc)}
& & \vdots & \vdots & \\
\sigma_{i+1} & \hdots & 0 & 1 & \\
& & \vdots & \vdots & \\
\sigma_i & \hdots & 1 & 0 & \\
& & \vdots & \vdots & \\
\end{block}
\end{blockarray}\;.
\end{equation}
Finally,
\begin{equation}
\label{eq:hhi}
h f_i(t) B = \begin{blockarray}{ccccc}
& & i & i+1 & \\
\begin{block}{c(cccc)}
& & \vdots & \vdots & \\
\sigma_{i+1} & \hdots & 1 & 0 & \\
& & \vdots & \vdots & \\
\sigma_i & \hdots & t + * & 1 & \\
& & \vdots & \vdots & \\
\end{block}
\end{blockarray}\; B ,
\end{equation}
and so the values of $h f_i(t)B$ for all $q-1$ nonzero values 
of $t$ are distinct. This proves the second case of~\eqref{eq:tioncosetrep}.
Now, as we sum over the action of $T_i$ on all $hB \in [ \sigma]$, 
we consider the two terms separately. The first is
\[
\sum_{h B \in [\sigma]} h s_i B =
\sum_{\substack{hB \in [\sigma ] \\ h_{\sigma_i, i} = 0}} \, \, \,
\sum_{\alpha = 0}^q (h + \alpha E_{\sigma_i, i} ) s_i B,
\]
where $E_{j,i}$ denotes the matrix with a $1$ in the $(j, i)$-th entry and 0 everywhere else.
Now, $(h + \alpha E_{\sigma_i, i} ) s_i B = g B$,  where $g$
is given in \eqref{eq:hsi}. Thus, the inner sum gives $q(gB)$, and
\[
\sum_{hB \in [ \sigma ]} h s_i B =
q \sum_{g B \in [\sigma s_i ]} g B.
\]
The second term is a double sum of $h f_i(t) B$ over $hB \in [ \sigma]$ and $t \neq 0$, where $h f_i(t) B$ is given in 
\eqref{eq:hhi}. By a similar argument to the one sketched above,
one can see that each matrix $hB$ for $hB \in [\sigma]$
will appear $q-1$ times,
completing the proof.
\end{proof}

We then easily conclude the following. 
\begin{cor}
\label{cor:ticommutestop}
For $h \in \GL_n(\F_q)$ and $\pi \in \symm_n$, the following identities hold for $1 \leqslant i \leqslant n-1$:
\begin{align}
  \label{eq:projcommuteswithaction}  \proj( hB \cdot T_i) &= \proj( hB) \cdot T_i,\\
   \label{eq:inclcommuteswithaction} \incl(\pi \cdot T_i) &= \incl(\pi) \cdot T_i.
\end{align}
\end{cor}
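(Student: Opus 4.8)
The plan is to read off both identities directly from \cref{lemma:actionofTiondoublecosets}, splitting into the two cases $\sigma_{i+1} < \sigma_i$ and $\sigma_{i+1} > \sigma_i$ and checking that the resulting coefficients match the action \eqref{eq:heckeonperm} of $T_i$ on permutations. Both $\proj$ and $\incl$ are $\C(q,\x)$-linear by construction (note $\proj(q) = q = \incl(q)$), so it suffices to verify the identities on the basis elements $hB$ and $\pi$, respectively.

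For \eqref{eq:projcommuteswithaction}: fix $hB \in [\sigma]$, so $\proj(hB) = \sigma$, and apply $\proj$ to both sides of \eqref{eq:tioncosetrep}. Since $\proj$ sends every coset of a fixed double coset $[\tau]$ to the permutation $\tau$, each group of terms on the right-hand side of \eqref{eq:tioncosetrep} that lies in a common double coset collapses to a scalar multiple of a single permutation. When $\sigma_{i+1} < \sigma_i$ this yields $\proj(hB \cdot T_i) = q\,\sigma s_i = \sigma \cdot T_i$, and when $\sigma_{i+1} > \sigma_i$ it yields $\proj(hB \cdot T_i) = \sigma s_i + (q-1)\sigma = \sigma \cdot T_i$, in both cases invoking \eqref{eq:heckeonperm}.

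For \eqref{eq:inclcommuteswithaction}: fix $\pi \in \symm_n$, so $\incl(\pi) = q^{\inv(\pi)}\sum_{gB \in [\pi]} gB$, and apply \eqref{eq:tionsumofcosets}. When $\pi_{i+1} < \pi_i$ one has $\inv(\pi s_i) = \inv(\pi) - 1$, so $\incl(\pi)\cdot T_i = q^{\inv(\pi)}\sum_{gB \in [\pi s_i]} gB = q\,\incl(\pi s_i) = \incl(q\,\pi s_i) = \incl(\pi \cdot T_i)$. When $\pi_{i+1} > \pi_i$ one has $\inv(\pi s_i) = \inv(\pi) + 1$, and \eqref{eq:tionsumofcosets} gives $\incl(\pi)\cdot T_i = q^{\inv(\pi)+1}\sum_{gB \in [\pi s_i]} gB + (q-1)q^{\inv(\pi)}\sum_{hB \in [\pi]} hB = \incl(\pi s_i) + (q-1)\incl(\pi) = \incl(\pi \cdot T_i)$, again by \eqref{eq:heckeonperm}.

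There is no genuine obstacle here: the corollary is essentially a repackaging of \cref{lemma:actionofTiondoublecosets}. The only point I would check carefully is the exponent bookkeeping in the inclusion case — that the normalization $q^{\inv(\pi)}$ is precisely what is needed so that the powers of $q$ produced by \eqref{eq:tionsumofcosets} reassemble into the coefficients $q$, $q-1$, $1$ appearing in \eqref{eq:heckeonperm} — and this is routine given that $\inv(\pi s_i) = \inv(\pi) \pm 1$ according to whether $\pi_{i+1} < \pi_i$ or $\pi_{i+1} > \pi_i$.
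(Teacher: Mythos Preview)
Your proposal is correct and follows essentially the same approach as the paper: both cases of \eqref{eq:tioncosetrep} are used to verify \eqref{eq:projcommuteswithaction}, both cases of \eqref{eq:tionsumofcosets} together with $\inv(\pi s_i) = \inv(\pi) \pm 1$ are used to verify \eqref{eq:inclcommuteswithaction}, and the coefficients are matched against \eqref{eq:heckeonperm}.
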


\begin{proof}
We will show that \eqref{eq:projcommuteswithaction} follows from \eqref{eq:tioncosetrep} and \eqref{eq:inclcommuteswithaction} follows 
from \eqref{eq:tionsumofcosets}. Let $hB \in [\sigma]$.
First suppose $\sigma_{i+1} < \sigma_i$. Applying \eqref{eq:tioncosetrep} and the definition of the action of $T_i$ on permutations in \eqref{eq:heckeonperm} gives
\begin{align*}
    \proj(hB \cdot T_i) &= \proj\left(\sum_{\substack{j =1 \\ g_j B \in [ \sigma s_i ]}}^q g_j B\right) \\
    &= q \ \sigma s_i = \sigma \cdot T_i = \proj(hB) \cdot T_i.
\end{align*}

Similarly, if $\sigma_{i+1} > \sigma_i$, we have 
\begin{align*}
    \proj(hB \cdot T_i) &= \proj \left(   gB + \sum_{\substack{j =1 \\ h_j B \in [\sigma]}}^{q-1}  h_jB\right) \\
    &= \sigma s_i + (q-1) \sigma = \sigma \cdot T_i = \proj(hB) \cdot T_i.
\end{align*}

We now prove \eqref{eq:inclcommuteswithaction}. Again, first consider $\sigma\in \symm_n$ with $\sigma_{i+1} < \sigma_i$. 
Then $\inv(\sigma s_i) = \inv(\sigma) - 1$ since $\sigma$ has a descent at position $i$. Thus using \eqref{eq:tionsumofcosets}, we have
\begin{align*}
    \incl(\sigma \cdot T_i) &= \incl(q \  \sigma s_i) 
    = q \left( q^{\inv(\sigma)-1} \sum_{g B \in [\sigma s_i] } g B \right) 
    = q^{\inv(\sigma)}  \sum_{g B \in  [\sigma s_i]} g B \\
    &= q^{\inv(\sigma)} \left( \sum_{hB \in [\sigma]} hB \right) \cdot T_i 
    = \incl(\sigma) \cdot T_i.
\end{align*}
If $\sigma_{i+1} > \sigma_i$, then $\inv(\sigma s_i) = \inv(\sigma) + 1$, and 
\begin{align*}
    \incl(\sigma \cdot T_i)&= \incl( \sigma s_i + (q-1) \sigma)
    = \left(q^{\inv(\sigma)+1} \sum_{g B \in [\sigma s_i]} gB \right) + (q-1)  \left( q^{\inv(\sigma)} \sum_{h B \in [\sigma ]} hB \right)\\
    &= q^{\inv(\sigma)} \left( q \sum_{g B \in [\sigma s_i ]} gB + (q-1) \sum_{h B \in [\sigma ]} hB \right) 
    = q^{\inv(\sigma)} \left( \sum_{h B \in [\sigma]} hB \right) \cdot T_i 
    = \incl(\sigma) \cdot T_i,
\end{align*}
completing the proof.
\end{proof}

The last piece we need before we may conclude \cref{theorem.topcommutativediagram} is that the maps $\mathcal{X}_{\symm_n}$ 
and $\mathcal{X}_{G/B}$ also commute with the inclusion and projection maps.

\begin{lemma}\label{lemma.chiandtopproj}
For $gB \in G/B$ and $\sigma \in \symm_n$, the following equalities hold:
    \begin{align*}
        \proj_{\symm_n}(gB) \cdot \X(q,\x) &= \proj_{\symm_n}(gB \cdot \mathcal{X}_{G/B}(q,\x)),  \\
        \incl_{G/B}(\sigma) \cdot \mathcal{X}_{G/B}(q,\x) &= \incl_{G/B} (\sigma \cdot \X(q,\x)).
    \end{align*}
\end{lemma}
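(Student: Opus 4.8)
The plan is to verify the two claimed identities directly by chasing a single coset $gB$ (resp.\ a single permutation $\sigma$) through the definitions of $\X(q,\x)$, $\mathcal{X}_{G/B}(q,\x)$, $\proj_{\symm_n}$, and $\incl_{G/B}$, tracking only the scalar coefficients since both weight operators are diagonal in the natural bases. The key observation is that both weight maps act by a scalar determined by a single ``first entry'' datum, and that this datum is preserved by the double-coset structure.

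For the projection identity, let $gB \in [\sigma]$, so $\proj_{\symm_n}(gB) = \sigma$. By \eqref{equation.XGB}, $gB \cdot \mathcal{X}_{G/B}(q,\x) = \frac{x_i}{q^{n-i}}\, gB$, where $i$ is the row index of the leading $1$ in the first column of $g$. Since $g$ is a distinguished representative of $[\sigma] = \bB\sigma B$, the rightmost nonzero entry pattern in the rows is the permutation matrix $\sigma$, and one checks that the leading (topmost) nonzero entry in the first column sits in row $\sigma_1$; hence $i = \sigma_1$. Applying $\proj_{\symm_n}$ (which is linear and maps $gB \mapsto \sigma$) gives $\proj_{\symm_n}(gB \cdot \mathcal{X}_{G/B}(q,\x)) = \frac{x_{\sigma_1}}{q^{n-\sigma_1}}\,\sigma$. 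On the other hand, $\proj_{\symm_n}(gB)\cdot\X(q,\x) = \sigma \cdot \X(q,\x) = \frac{x_{\sigma_1}}{q^{n-\sigma_1}}\,\sigma$ by \eqref{defXq}. The two sides agree.

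For the inclusion identity, let $\sigma \in \symm_n$. By definition $\incl_{G/B}(\sigma) = q^{\inv(\sigma)}\sum_{gB \in [\sigma]} gB$, and $\sigma \cdot \X(q,\x) = \frac{x_{\sigma_1}}{q^{n-\sigma_1}}\sigma$, so $\incl_{G/B}(\sigma \cdot \X(q,\x)) = \frac{x_{\sigma_1}}{q^{n-\sigma_1}}\,q^{\inv(\sigma)}\sum_{gB \in [\sigma]} gB$. Conversely, applying $\mathcal{X}_{G/B}(q,\x)$ to $\incl_{G/B}(\sigma)$ term by term: every $gB \in [\sigma]$ has leading $1$ in its first column in row $i = \sigma_1$ (as above), so each is scaled by the \emph{same} factor $\frac{x_{\sigma_1}}{q^{n-\sigma_1}}$, giving $q^{\inv(\sigma)}\sum_{gB \in [\sigma]} \frac{x_{\sigma_1}}{q^{n-\sigma_1}}\,gB$, which matches. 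This is where it is crucial that the weight depends only on the double coset $[\sigma]$ and not on the individual representative---precisely the compatibility recorded after \eqref{equation.XGB}.

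The only genuine content, and the step I expect to require the most care, is the claim that for every $gB \in [\sigma] = \bB\sigma B$ the leading nonzero entry of the first column of the distinguished representative $g \in \mathfrak{G}$ lies in row $\sigma_1$. This follows from the description of $\bB\sigma B$ recalled before \cref{ex:doublecosets}: the rightmost nonzero entries of the rows form the permutation matrix $\sigma$ and such an entry is the topmost nonzero entry in its column. The first column's pivot is the entry equal to $1$ whose column index is $1$; the unique row with rightmost-nonzero-entry in column $1$ is row $\sigma^{-1}(1)$, but reading off which row carries the leading entry of \emph{column} $1$ amounts to identifying the row index $i$ with $\sigma_1$ via the one-line-notation convention. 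One should double-check this against \cref{ex:doublecosets} (e.g.\ for $[312]$ the first column has its $1$ in row $3 = \sigma_1$, and indeed the weight is $x_3/q^{n-3}$, consistent with \cref{ex:addlinestocoset}); once this indexing is pinned down, both identities are immediate.
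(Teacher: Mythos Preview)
Your argument is correct and follows exactly the same approach as the paper: both sides are verified directly from the definitions, using that $\mathcal{X}_{G/B}(q,\x)$ and $\X(q,\x)$ are diagonal operators whose scalar on $gB\in[\sigma]$ (resp.\ on $\sigma$) is $x_{\sigma_1}/q^{n-\sigma_1}$ because the pivot in the first column of any distinguished representative of $[\sigma]$ lies in row $\sigma_1$. Your final paragraph's detour through $\sigma^{-1}(1)$ is an unnecessary wobble---the convention is simply that column $j$ of the permutation matrix has its $1$ in row $\sigma_j$, as your check against $[312]$ confirms---but the conclusion is right.
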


\begin{proof}
Both equalities follow by construction from the definitions of $\mathcal{X}_{\symm_n}(q,\x)$ and $\mathcal{X}_{G/B}(q,\x)$. 
Recall from \eqref{equation.XGB} that for $g \in G$, the weight operator $\mathcal{X}_{G/B}(q,\x)$ acts on $gB$ by 
$gB \cdot \mathcal{X}_{G/B}(q,\x) = q^{i-n} x_i \, gB$, where $i$ is the row index of the leading 1 in the first column of $g$. The claim follows 
by noting that under $\proj_{\symm_n}(gB)$, this entry is $\sigma_1$, where $\sigma$ is such that $gB \in [\sigma]$. 
Similarly, $\incl_{G/B}(\sigma)$ maps to the sum of elements in $G/B$ 
with row pivot $\sigma_1$ in the first column. Since $\X(q,\x)$ and $\mathcal{X}_{G/B}(q,\x)$ only introduce scalars and we have now observed that 
these scalars match, the claim follows.
\end{proof}

\begin{proof}[Proof of \cref{theorem.topcommutativediagram}]
The claim follows immediately by the definitions of $\hatTqx$ and $\Tqx$, applying \cref{cor:ticommutestop} and \cref{lemma.chiandtopproj}, 
and extending by linearity. In particular, we have shown for $gB \in G/B$ and $\pi \in \symm_n$ that 
\begin{align*}
    \proj_{\symm_n}(gB \cdot \hatTqx) &= \proj_{\symm_n}(gB) \cdot \Tqx, \\
    \incl_{G/B} (\pi \cdot \Tqx) &= \incl_{G/B}(\pi) \cdot \hatTqx,
\end{align*} 
thus establishing the result.
\end{proof}

%%%%%%%%%%%%%%%%%%%%%%%%%%%%%%%%%%%%%%%%%%%%%%%%%%%%%%%%%%%
\subsubsection{Commutative diagrams between permutations and words}

We now move to the relationship between $\Tqx$ and $\wordTqx$. To do so, we set some notation and definitions.

Recall that $\m = (m_1, \ldots, m_\ell)$ is a tuple of positive integers, where $m_1 + \cdots +m_\ell = n$ (so $\m$ is an
integer composition of $n$). 
Let $W_{\m}$ be the set of words of content $\m$. The Hecke action on $W_{\m}$ is 
given in~\eqref{eq:heckeonwords}, and extended linearly. Let 
\begin{equation}
 \label{equation.Midef}
 \begin{split}
    n_i:=&\, m_1 + m_2 + \cdots + m_i,\\
    M_i:=&\,  \{ n_{i-1} +1 , n_{i-1} +2, \ldots, n_i \}.
\end{split}
\end{equation} 
Note that $|M_i| = m_i$ and $n_\ell = n$. We consider the Young subgroup
\[ 
	\symm_\m:= \symm_{M_1} \times \symm_{M_2} \times \cdots \times \symm_{M_\ell},
\]
where $\symm_{M_i}$ is the symmetric group on the set $M_i$. Thus
\[ 
	\C[W_\m] \cong \C[\symm_n/\symm_\m].
\]
Suppose that $\sigma \in \symm_\m$, so that $\sigma$ can be written as a product 
\[ 
	\sigma = \sigma^{(1)} \sigma^{(2)} \cdots \sigma^{(\ell)} 
\]
for $\sigma^{(i)} \in \symm_{M_i}$. 
Then define the inversion number on $\symm_\m$ as follows:
\begin{equation}\label{eq:wordinv}
	\inv_{\m}(\sigma):= \sum_{i=1}^{\ell} \inv_i(\sigma^{(i)}),
\end{equation}
where $\inv_i(\sigma^{(i)})$ is the inversion number of $\sigma^{(i)}$ in $\symm_{M_i}$. Note that this is not the same as the inversion $\inv$ on words appearing in \cref{th:steady state words}, which one can think of as inversions on $\symm_n/\symm_\m$ rather than on $\symm_\m$.

We next define a method of turning a word into a permutation, and vice-versa.
\begin{defn}
Given $w \in W_{\m}$, let $\std(w)$ be the \emph{standardization of $w$} into a permutation by assigning, from left to right, 
the letters labeled $1$ in $w$ to the set $M_1$, the letters labeled $2$ to the set $M_2$, and so forth until the letters labeled 
$\ell$ are labeled by $M_\ell$. By construction, $\std(w) \in \symm_n$. 

Given $\sigma \in \symm_n$, define the $\m$-destandardization $\destd_\m$ of $\sigma$ by mapping the letters in $M_i$ to $i$, 
so that $\destd_\m(\sigma) \in W_\m$. 
\end{defn}

\begin{example}
Consider the word $w = 322211233$. Then
\[ \std(w) = 734512689.\]
On the other hand, consider $\pi = 56412378$ and $\m = (2,3,1,2)$. Then 
\[ \destd_{(2,3,1,2)}(\pi) = 23211244.\]
\end{example}

We use $\std$ and $\destd_\m$ to define maps between $\C[\symm_n]$ and $\C[W_\m]$.
In addition, to relate $\Tqx$ and $\wordTqx$, we require the weights $\x$ and $\xbar$ to be compatible. For ease of notation and exposition, we introduce the following change-of-variables:
\begin{align}
\label{eq:ydef}    y_i:=\;& \frac{x_i}{q^{n-i}} \smallskip \\ \smallskip
 \label{eq:yodef}   \yo_i:= \;& \frac{\xo_i}{q^{n-n_i} [m_i]_q}.
\end{align}

\begin{defn}
Let $\m = (m_1, \ldots, m_\ell)$ be a composition of $n$.
A system of weights $\x = (x_1, \ldots, x_n)$ is $\m$-compatible if 
\[ 
	 y_i = y_j  \qquad \text{whenever} \qquad i,j \in M_k \qquad \text{for} \qquad 1 \leqslant k \leqslant \ell. 
\]
In other words, the $\y$ coordinates must be equal on the sets $M_k$.

\end{defn}

With this, we can now define the inclusion and projection maps. To do so, we must specify how to map elements of $\symm_n$ 
and $W_\m$, as well as the scalars $\C(q,\x)$ and $\C(q,\xbar)$. 

\begin{defn}\label{def:projandinclforwords}
Given $\Tqx$ and a composition $\m = (m_1, \ldots, m_\ell)$ of $n$, suppose the weights $\x$ of $\Tqx$ are $\m$-compatible. 
Then we define the \textit{projection}
    \begin{align*}
        \proj_{W_\m}: \C(q,\x)[\symm_n] &\longrightarrow \C(q,\xbar)[W_\m]
        \qquad \text{by mapping}\\
        \sigma &\longmapsto \destd_\m(\sigma)\\
        x_{n_{j-1} + i} & \longmapsto \frac{q^{m_j-i}}{[m_j]_q} \xo_{j} \qquad \qquad \text{ for any} \qquad 1 \leqslant i \leqslant m_j
    \end{align*}
and the \textit{inclusion}
    \begin{align*}
        \incl_{\symm_n}: \C(q,\xbar)[W_\m] &\longrightarrow \C(q,\x)[\symm_n]
        \qquad \text{by mapping}\\
        w &\longmapsto \sum_{\tau \in \symm_\m} q^{-\inv_\m(\tau)} \big( \tau \cdot \std(w) \big) \\
      \xo_j &\longmapsto  [m_j]_q \, x_{n_j}.
    \end{align*}
\end{defn}

\begin{remark}
    Note that $\proj_{W_\m}$ and $\incl_{\symm_n}$ are algebra homomorphisms, where
    \[ \proj_{W_m}(p(q,\x) \cdot \sigma ) = \proj_{W_\m}(p(q,\x)) \cdot \proj_{W_\m}(\sigma)\]
    for $p(q,\x) \in \C(q,\x)$ and $\sigma \in \symm_n$ (and analogously for $\incl_{\symm_n}$).
\end{remark}

\begin{example}
Suppose $\m = (3,1)$ and $w = 1211$. Then 
\[ \incl_{\symm_n}(1211) = 1423 + q^{-1} 2413 + q^{-1}1432 +  + q^{-2}2431 + q^{-2}3412 + q^{-3}3421.  \]
\end{example}

We show in \cref{lemma:howtomapybar} below that the projection map (1) preserves the sum of the weights $\x$, and (2) is 
well-defined whenever $\x$ is $\m$-compatible. 

\begin{lemma}\label{lemma:howtomapybar}
Fix a composition $\m = (m_1, \ldots, m_\ell)$ of $n$, and suppose $\x = (x_1, \ldots, x_n)$ is a tuple that is $\m$-compatible. 
Then the following hold:
\begin{enumerate}
    \item\label{eq:xbarsumto1} Suppose $x_1 + x_2 + \cdots + x_n =1.$ 
    Then defining $\xbar = (\xo_1, \ldots, \xo_\ell)$ by 
    \[ \xo_j = [m_j]_q \ x_{n_j},\]
    we have that $\xo_1 + \xo_2 + \cdots +\xo_\ell = 1$.
  \item \label{eq:ybarmapping}
For $1 \leqslant i \leqslant m_j$ we have
\begin{align*}
     \proj_{W_\m}(y_{n_{j-1}+i}) &= \yo_j,\\
     \incl_{\symm_n}(\yo_j) &= y_{n_j}.
 \end{align*}
 Hence $\proj_{W_\m}$ is well-defined when $\x$ is $\m$-compatible.
\end{enumerate}
\end{lemma}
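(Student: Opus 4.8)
The plan is to reduce both assertions to a single concrete reformulation of $\m$-compatibility, after which everything is elementary manipulation of powers of $q$. First I would unwind the definition $y_i = x_i/q^{\,n-i}$ from \eqref{eq:ydef} and observe that, since $n_j = n_{j-1}+m_j$ and $n_j\in M_j$, the condition ``$y_i = y_{i'}$ whenever $i,i'\in M_j$'' is equivalent to the block relation
\[
x_{n_{j-1}+i} \;=\; q^{\,m_j-i}\, x_{n_j}
\qquad\text{for all } 1\leqslant i\leqslant m_j,\ 1\leqslant j\leqslant\ell .
\]
In words: on each block $M_j$ the weights form a geometric progression in $q$ pinned by the last entry $x_{n_j}$. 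All of the computations below are bookkeeping against this identity.

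For the sum statement, I would sum the block relation over $i\in M_j$ to get $\sum_{i\in M_j} x_i = x_{n_j}\sum_{i=1}^{m_j} q^{\,m_j-i} = [m_j]_q\,x_{n_j} = \xo_j$, and then sum over $j$, which yields $\xo_1+\cdots+\xo_\ell = x_1+\cdots+x_n$; this is $1$ under the hypothesis. For the two mapping identities, I would substitute the generator rules of $\proj_{W_\m}$ and $\incl_{\symm_n}$ from \cref{def:projandinclforwords} into \eqref{eq:ydef} and \eqref{eq:yodef}. Writing $y_{n_{j-1}+i} = x_{n_{j-1}+i}/q^{\,n-n_{j-1}-i}$ and applying $\proj_{W_\m}$, which sends $x_{n_{j-1}+i}$ to $q^{\,m_j-i}\xo_j/[m_j]_q$, the $q$-powers collect (using $n_{j-1}+m_j=n_j$) to $\xo_j/(q^{\,n-n_j}[m_j]_q) = \yo_j$; crucially the dependence on $i$ cancels, which is what makes the map land consistently on the single coordinate $\yo_j$. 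Symmetrically, since $\incl_{\symm_n}$ acts on scalars by $\xo_j\mapsto[m_j]_q x_{n_j}$, one gets $\incl_{\symm_n}(\yo_j) = [m_j]_q x_{n_j}/(q^{\,n-n_j}[m_j]_q) = y_{n_j}$.

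The one point that requires genuine care --- the ``main obstacle'', though it is conceptual rather than computational --- is the well-definedness of $\proj_{W_\m}$, because $\C(q,\x)$ is a priori a rational function field in $n$ variables whereas $\C(q,\xbar)$ has only $\ell$, so a $\C(q)$-algebra map between them can only exist after one interprets the source modulo the $\m$-compatibility relations. I would make this precise by checking that the generator assignment $x_{n_{j-1}+i}\mapsto q^{\,m_j-i}\xo_j/[m_j]_q$ respects the block relation $x_{n_{j-1}+i} = q^{\,m_j-i}x_{n_j}$: applying the assignment to both sides produces the same element $q^{\,m_j-i}\xo_j/[m_j]_q$, so it descends to a well-defined $\C(q)$-algebra homomorphism on the function field of the $\m$-compatible locus. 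The mapping identities above, combined once more with the block relation, then exhibit $\incl_{\symm_n}$ as a two-sided inverse, so $\proj_{W_\m}$ is in fact an isomorphism onto its image; organizing the verification this way makes all three statements fall out together.
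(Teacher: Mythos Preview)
Your proposal is correct and follows essentially the same approach as the paper: you derive the block relation $x_{n_{j-1}+i} = q^{m_j-i}x_{n_j}$ from $\m$-compatibility, sum it to get part (1), and substitute the generator rules directly to verify part (2), exactly as the paper does. Your discussion of well-definedness as descent modulo the compatibility relations is a slightly more careful algebraic framing than the paper's (which simply notes that all the $y_{n_{j-1}+i}$ in a block land on the same $\yo_j$), but the underlying verification is identical.
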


\begin{proof}
For \eqref{eq:xbarsumto1}, we have that $x_{i} = q^{n-i}y_i$. Since $\x$ is $\m$-compatible,  
\begin{align*}
    &y_{n_j} = \, \, y_{n_{j}-1} = y_{n_j-2} =\cdots = y_{n_{j-1}+1} \\
   =& \frac{x_{n_j}}{q^{n-n_j}} = \frac{x_{n_j-1}}{q^{n-n_j +1}} = \frac{x_{n_j-2}}{q^{n-n_j+2}} = \cdots = \frac{x_{n_{j-1}+1}}{q^{n-n_{j-1}-1}}, 
   \end{align*}
and thus $q^{i} x_{n_j} = x_{n_j-i}$ for $0 \leqslant i < m_j$.
It follows that 
\[
   x_{n_{j}}+ x_{n_{j}-1} + \cdots +  x_{n_{j-1}+1}  = x_{n_j} + q x_{n_j} + q^{2} x_{n_j} + \cdots +q^{m_j -1} x_{n_j} = [m_j]_q x_{n_j},
\]
and hence 
\[
   1= x_1 + \cdots + x_n = [m_1]_q \, x_{n_1} + [m_2]_q \, x_{n_2} + \cdots + [m_\ell]_q \, x_{n_\ell} 
    = \xo_1 + \cdots + \xo_\ell.
\]

For \eqref{eq:ybarmapping}, we have 
\begin{align*}
    \proj_{W_\m}(y_{n_{j-1}+i}) &= \frac{1}{q^{n-n_{j-1}-i}} \proj_{W_\m}(x_{n_j-1+i}) 
     = \frac{q^{m_j-i}}{q^{n-n_{j-1}-i}[m_j]_q} \xo_j \\
    & = \frac{q^{m_j -n + n_{j-1}}}{[m_j]_q} \xo_j 
    = \frac{\xo_j}{q^{n-n_j}[m_j]_q} =\yo_j.
\end{align*}

On the other hand, 
\[
    \incl_{\symm_n}(\yo_j) = \frac{1}{q^{n-n_j}[m_j]_q}\incl_{\symm_n}(\xo_j) 
    = \frac{1}{q^{n-n_j}} x_{n_j} = y_{n_j}.
\]

It follows that $\proj_{W_\m}$ is well-defined when $\x$ is $\m$-compatible, since for every $n_{j-1}+i \in M_{j}$, every $y_{n_{j-1}+i}$ is mapped 
to the same element. On the other hand, $\incl_{\symm_n}(\yo_j) = y_{n_j}$ determines the $\y$-coordinates because 
$y_{n_{j-1}+1} = y_{n_{j-1}+2} = \cdots = y_{n_j}$ by assumption.
\end{proof}

We are now ready to prove that the inclusion and projection maps commute with $\Tqx$ and $\wordTqx$.
\begin{theorem}\label{theorem.bottomcommutativediagram}
Suppose $\x$ is $\m$-compatible. Then
with $\proj_{W_\m}$ and $\incl_{\symm_n}$ as above, the following diagrams commute:
\[
\begin{tikzcd}[column sep=huge, row sep=huge]
\C(q,\x)[\symm_n] \arrow[d, swap, "\proj_{W_\m}"]  \arrow[r,  "\Tqx" ]  
& \C(q,\x)[\symm_n]  \arrow[d, "\proj_{W_\m}"] \\
\C(q,\xbar)[W_\m]   \arrow[r,  "\wordTqx" ] 
&  \C(q,\xbar)[W_\m] 
\end{tikzcd}
\qquad \qquad 
\begin{tikzcd}[column sep=huge, row sep=huge]
\C(q,\x)[\symm_n] \arrow[r,  "\Tqx"] 
& \C(q,\x)[\symm_n] \\
\arrow[u, hook, "\incl_{\symm_n}"] \C(q,\xbar)[W_\m] \arrow[r,  "\wordTqx" ]
&  \C(q,\xbar)[W_\m] \arrow[u, hook, swap, "\incl_{\symm_n}"] 
\end{tikzcd} 
 \]
\end{theorem}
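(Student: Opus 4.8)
The approach mirrors the proof of \cref{theorem.topcommutativediagram}. Recall that $\Tqx$ and $\wordTqx$ are obtained by applying $\sum_{i=1}^{n} T_{i-1}\cdots T_1$ and then the weight operator $\X(q,\x)$, respectively $\wordX(q,\xbar)$ (see \eqref{eq:TxdefHecke}). As in \cref{theorem.topcommutativediagram}, it therefore suffices to establish, for all $\sigma\in\symm_n$, $w\in W_\m$ and $1\leqslant i\leqslant n-1$, the Hecke intertwiners
\[
\proj_{W_\m}(\sigma\cdot T_i)=\proj_{W_\m}(\sigma)\cdot T_i,\qquad
\incl_{\symm_n}(w\cdot T_i)=\incl_{\symm_n}(w)\cdot T_i,
\]
together with the weight intertwiners
\[
\proj_{W_\m}\!\bigl(\sigma\cdot\X(q,\x)\bigr)=\proj_{W_\m}(\sigma)\cdot\wordX(q,\xbar),\qquad
\incl_{\symm_n}\!\bigl(w\cdot\wordX(q,\xbar)\bigr)=\incl_{\symm_n}(w)\cdot\X(q,\x);
\]
iterating the Hecke intertwiner along each factor $T_{i-1}\cdots T_1$, then applying the weight intertwiner and extending by linearity (using that $\proj_{W_\m}$, $\incl_{\symm_n}$ are algebra homomorphisms on the coefficients, together with \cref{lemma:howtomapybar}) yields both diagrams.

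For the projection, the Hecke intertwiner is checked by comparing \eqref{eq:heckeonperm} with \eqref{eq:heckeonwords}, according to whether $\sigma_i,\sigma_{i+1}$ lie in the same block $M_k$. If they lie in different blocks, then $\destd_\m(\sigma s_i)=\destd_\m(\sigma)\,s_i$ and $\sigma_i<\sigma_{i+1}$ precisely when $w_i<w_{i+1}$, where $w=\destd_\m(\sigma)$, so the two rules agree term by term. If $\sigma_i,\sigma_{i+1}$ lie in the same block, then $w_i=w_{i+1}$, the swap $\sigma\mapsto\sigma s_i$ does not change $\destd_\m$, the word rule gives $w\cdot T_i=qw$, and the permutation rule gives $q\,\sigma s_i$ or $\sigma s_i+(q-1)\sigma$ according to the sign of $\sigma_i-\sigma_{i+1}$; in either case $\destd_\m(\sigma\cdot T_i)=qw$, as needed. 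The projection weight intertwiner is immediate from \cref{lemma:howtomapybar}: with $y_i=x_i/q^{n-i}$ as in \eqref{eq:ydef} we have $\sigma\cdot\X(q,\x)=y_{\sigma_1}\sigma$, and if $\sigma_1\in M_j$ then $\proj_{W_\m}(y_{\sigma_1})=\yo_j$ while $\destd_\m(\sigma)$ begins with the letter $j$; since $m_{j+1}+\cdots+m_\ell=n-n_j$, unwinding the definition of $\wordX(q,\xbar)$ shows $\destd_\m(\sigma)\cdot\wordX(q,\xbar)=\yo_j\,\destd_\m(\sigma)$.

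The inclusion Hecke intertwiner is the crux and requires careful tracking of powers of $q$. First I would record that if $\sigma=\tau\cdot\std(w)$ with $\tau\in\symm_\m$, then $\inv(\sigma)=\inv(w)+\inv_\m(\tau)$: the permutation $\std(w)$ has no inversions among positions carrying values from a single block, acting by $\tau$ creates exactly $\inv_\m(\tau)$ of them by \eqref{eq:wordinv}, and the cross-block inversions — which number $\inv(w)$ — are unaffected. As $\tau\mapsto\tau\cdot\std(w)$ is a bijection from $\symm_\m$ onto $\{\sigma\in\symm_n:\destd_\m(\sigma)=w\}$, this rewrites $\incl_{\symm_n}(w)=q^{\inv(w)}I_w$ where $I_w:=\sum_{\destd_\m(\sigma)=w}q^{-\inv(\sigma)}\sigma$. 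Next I would prove, in the spirit of \eqref{eq:tionsumofcosets}, that
\[
I_w\cdot T_i=\begin{cases}I_{ws_i}&\text{if }w_i>w_{i+1},\\[2pt] q\,I_{ws_i}+(q-1)I_w&\text{if }w_i<w_{i+1},\\[2pt] q\,I_w&\text{if }w_i=w_{i+1},\end{cases}
\]
by combining \eqref{eq:heckeonperm} with the involution $\sigma\mapsto\sigma s_i$ on $\{\destd_\m(\sigma)=w\}$, which shifts $\inv$ by $\pm1$. Together with $\inv(ws_i)=\inv(w)\mp1$ and \eqref{eq:heckeonwords} this gives $\incl_{\symm_n}(w\cdot T_i)=q^{\inv(w)}(I_w\cdot T_i)=\incl_{\symm_n}(w)\cdot T_i$ in all three cases. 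The delicate case is $w_i=w_{i+1}$, where the word rule collapses to $qw$ but the permutation rule does not: one splits $\{\destd_\m(\sigma)=w\}$ by the sign of $\sigma_i-\sigma_{i+1}$, applies \eqref{eq:heckeonperm} to each part, and checks that after the $q^{\mp1}$ weight shifts the two parts recombine to $q\,I_w$. I expect this step to be the main obstacle.

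Finally, the inclusion weight intertwiner is where $\m$-compatibility enters. From the definition of $\wordX(q,\xbar)$ and \eqref{eq:yodef} we have $w\cdot\wordX(q,\xbar)=\yo_{w_1}w$, so $\incl_{\symm_n}(w\cdot\wordX(q,\xbar))=\incl_{\symm_n}(\yo_{w_1})\,\incl_{\symm_n}(w)=y_{n_{w_1}}\,\incl_{\symm_n}(w)$ by \cref{lemma:howtomapybar}. Conversely, every permutation $\sigma$ occurring in $\incl_{\symm_n}(w)$ satisfies $\destd_\m(\sigma)=w$, hence $\sigma_1\in M_{w_1}$; since $\x$ is $\m$-compatible, $y_{\sigma_1}=y_{n_{w_1}}$ for every such $\sigma$, so $\incl_{\symm_n}(w)\cdot\X(q,\x)=y_{n_{w_1}}\,\incl_{\symm_n}(w)$ as well. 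Assembling the four intertwiners by linearity as indicated in the first paragraph completes the proof.
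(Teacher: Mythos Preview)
Your proposal is correct and follows essentially the same route as the paper: reduce to checking that $\proj_{W_\m}$ and $\incl_{\symm_n}$ intertwine each Hecke generator $T_i$ and the weight operators, then assemble. The only organizational difference is in the inclusion Hecke intertwiner: you rewrite $\incl_{\symm_n}(w)=q^{\inv(w)}I_w$ via the identity $\inv(\tau\cdot\std(w))=\inv(w)+\inv_\m(\tau)$ and prove a clean three-case formula for $I_w\cdot T_i$, treating all cases uniformly, whereas the paper leaves the cases $w_i\neq w_{i+1}$ implicit and, for the critical case $w_i=w_{i+1}$, reduces without loss of generality to a single block $\m=(n)$ before doing the same ascent/descent pairing computation you outline.
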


\begin{proof}
We will again first show that the action of $T_i$ commutes with the maps $\proj_{W_\m}$ and $\incl_{\symm_n}$. We begin with 
$\proj_{W_\m}$. Note that $\destd_\m$ is order preserving in the sense that if $\sigma_{i+1} > \sigma_i$ and $w = \destd_\m(\sigma)$, 
then $w_{i+1} \geqslant w_i$ (and similarly if $\sigma_{i+1} < \sigma_i$). Thus it suffices to check the case when $w = \destd_\m(\sigma)$ 
has $w_{i+1} = w_i$. In this case, $\destd_\m(\sigma) \cdot T_i = q \ \destd_\m(\sigma)$. But we see that, using \eqref{eq:heckeonwords}, 
$\destd_\m(\sigma \cdot T_i) = q \ \destd_m(\sigma)$ as well, since by assumption $w_i = w_{i+1}$, implying that
     \[\destd_\m(\sigma \cdot s_i) = \destd_\m(\sigma) = \destd_\m(\sigma) \cdot s_i. \]

Next we consider the inclusion map $\incl_{\symm_n}$. Note that if $w_{i+1} < w_i$ in $w$ (respectively, $w_{i+1}> w_i$), and $\pi = \std(w)$, 
then $\pi_{i+1} < \pi_i$ (respectively, $\pi_{i+1} > \pi_i$), and this is preserved by the action of any $\sigma \in \symm_\m$, since $\sigma$ 
only permutes within blocks of elements that are equal in $w$. Hence, it again suffices to check the case that $w_{i} = w_{i+1}$; as before, 
$w \cdot T_i = q \ w$. 

Now consider $\incl_{\symm_n}(w) \cdot T_i$. Since $w_i = w_{i+1}$ by assumption, using the definition of $\incl_{\symm_n}(w)$ we may 
consider without loss of generality a word with a single alphabet, i.e. $\m = (n)$. In this case, $\std(w)$ is the identity permutation 
(in one-line notation).
On the one hand, we have 
\begin{equation*}
    \incl_{\symm_n}(w \cdot T_i) = q\ \incl_{\symm_n}(w) 
    = q\sum_{\sigma \in \symm_n} q^{-\inv(\sigma)} \sigma. 
\end{equation*}
On the other hand, let us consider $\incl_{\symm_n}(w) \cdot T_i$, which we break up into two sums:
\begin{align}
    \incl_{\symm_n}(w) \cdot T_i &= \left( \sum_{\sigma \in \symm_n} q^{-\inv(\sigma)}  \sigma \right) \cdot T_i \nonumber \\
    &= \label{eq:twosums}\left( \sum_{\substack{\sigma \in \symm_n \\ \sigma_{i+1} > \sigma_i}} 
    q^{-\inv(\sigma)}\sigma + \sum_{\substack{\nu \in \symm_n \\ \nu_{i+1}<\nu_i}} q^{-\inv(\nu)} \nu \right) T_i \\
    &= \label{eq:simpletwosum} \left( \sum_{\substack{\sigma \in \symm_n\\ \sigma_{i+1}> \sigma_i}} 
    (q^{-\inv(\sigma)} \sigma + q^{-\inv(\sigma s_i)} \sigma s_i )\right) \cdot T_i \\
    &= \label{eq:afterTiacts} \sum_{\substack{\sigma \in \symm_n\\ \sigma_{i+1}> \sigma_i}} 
    \left( q^{-\inv(\sigma)} \bigg(\sigma s_i + (q-1) \sigma \bigg) + q^{-\inv(\sigma s_i)} (q \sigma) \right), 
\end{align}
where \eqref{eq:simpletwosum} follows from the fact that there is a bijection between the two sums in \eqref{eq:twosums} from multiplication by 
$s_i$. Since $\sigma_{i+1}> \sigma_i$, it follows that $\inv(\sigma s_i) = \inv(\sigma) + 1$, and therefore $-\inv(\sigma) = -\inv(\sigma s_i) + 1$. 
We can thus rewrite \eqref{eq:afterTiacts} to obtain
 \begin{align*}
   \incl_{\symm_n}(w) \cdot T_i &= \sum_{\substack{\sigma \in \symm_n\\ \sigma_{i+1}> \sigma_i}} 
   (q^{-\inv(\sigma s_i) +1} \sigma s_i + q^{-\inv(\sigma)+1} \sigma - q^{-\inv(\sigma)} \sigma + q^{-\inv(\sigma )-1+1} \sigma) \\
   &=  \sum_{\substack{\sigma \in \symm_n\\ \sigma_{i+1}> \sigma_i}} 
   q (q^{-\inv(\sigma s_i)} \sigma s_i + q^{-\inv(\sigma)} \sigma) \\
   &= q \incl_{\symm_n}(w) = \incl_{\symm_n}(w \cdot T_i). 
 \end{align*}

Finally, we must show that for $\sigma \in \symm_n$ and $w \in W_\m$, the following equalities hold:
    \begin{align*}
        \proj_{W_\m}(\sigma)\cdot \wordX(q,\xbar) &= \proj_{W_\m}(\sigma\cdot \X(q,\x)),  \\
        \incl_{\symm_n}(w)\cdot \X(q,\x)&= \incl_{\symm_n} (w\cdot \wordX(q,\xbar)).
    \end{align*}
 As in the case of \cref{lemma.chiandtopproj}, this follows by construction from the definitions of $\wordX(q,\xbar)$ and 
 $\X(q,\x)$, as well as \cref{lemma:howtomapybar}. Specifically, $\incl_{\symm_n}(w)\cdot \X(q,\x)= \incl_{\symm_n} (w\cdot \wordX(q,\xbar))$
only when $\x$ is $\m$-compatible because 
 $\incl_{\symm_n}(\yo_j) = y_{n_j}$ determines the other $y_{n_{j-1}+i}$ for $1 \leqslant i \leqslant m_j$.
\end{proof}

%%%%%%%%%%%%%%%%%%%%%%%%%%%%%%%%%%%%%%%%%%%%%%%%%%%%%%%%%%%%%%%%%%
\subsection{Consequences of the commutative diagrams}
\label{subsection.diagramconsequences}

We first address the consequences of the projection maps $\proj_{\symm_n}$ and $\proj_{W_\m}$, which will allow us to deduce the 
stationary distributions of $\Tqx$ and $\wordTqx$ from $\hatTqx$; this will be crucial in \cref{section.stationary distribution}.

Suppose $\mathcal{M}^A$ and $\mathcal{M}^B$ are irreducible Markov chains on state spaces $A$ and $B$, respectively.
Denote by $T^A$ and $T^B$ the corresponding transition matrices, and $T_{a_1,a_2}^{A}$ the $(a_1, a_2)$ entry of $T^A$ (and similarly for $T^B$). 
We say that the chain $\mathcal{M}^A$ \emph{lumps} or \emph{projects} 
onto the chain $\mathcal{M}^B$ if there exists a surjection $p \colon A \to B$ such that
\begin{equation}
\label{equation.lumping T}
	\sum_{a_2 \in p^{-1}(b_2)} T^A_{a_1,a_2} = T^B_{b_1, b_2}
\end{equation}
for all $b_1, b_2 \in B$ such that $a_1 \in p^{-1}(b_1)$. 
Let $\Psi^A$ and $\Psi^B$ be the respective stationary distributions.
It is a standard fact~\cite[Section 2.3]{LevinPeresWilmer} that in such a situation
\begin{equation}\label{eq:lumpingdef}
	\Psi^B_b = \sum_{a \in p^{-1}(b)} \Psi^A_a.
\end{equation}
In our context, the map $p$ is $\proj_{\symm_n}$ and $\proj_{W_\m}$, respectively. In fact, our analysis says something stronger than \eqref{eq:lumpingdef}.

\begin{lemma}\label{lemma:hinB-commutes}
Suppose $h\in \bB.$ Then for any $gB \in \C[G/B]$ we have 
\begin{equation}\label{eq:hinB-commutes}
     h \big( gB \cdot \hatTqx \big) = \big( (hg)B \big) \cdot \hatTqx.
\end{equation} 
\end{lemma}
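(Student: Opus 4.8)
The plan is to reduce the statement to the $q$-random-to-top operator $\hatTq$ alone, since the weight operator $\mathcal{X}_{G/B}(q,\x)$ only rescales cosets and commutes with the left $\bB$-action in an obvious way. First I would verify that for $h \in \bB$ one has $h\big(g'B \cdot \mathcal{X}_{G/B}(q,\x)\big) = (hg')B \cdot \mathcal{X}_{G/B}(q,\x)$ for every $g'B$: by \eqref{equation.XGB}, $\mathcal{X}_{G/B}(q,\x)$ multiplies $g'B$ by $x_i/q^{n-i}$, where $i$ is the row index of the leading $1$ in the first column of $g'$; left multiplication by a lower-triangular $h$ does not change which row contains the leading (i.e.\ topmost) nonzero entry of the first column, nor the fact that after rescaling that entry is a pivot, so the scalar is unchanged. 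Hence it suffices to prove \eqref{eq:hinB-commutes} with $\hatTqx$ replaced by $\hatTq$, i.e.\ $h\big(gB \cdot \hatTq\big) = (hg)B \cdot \hatTq$, and then compose.

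For the reduced claim I would use the flag description \eqref{eq:r2tflags}: if $gB$ corresponds to the flag $(V_0 \subseteq V_1 \subseteq \cdots \subseteq V_n)$, then $gB \cdot \hatTq = \tfrac{1}{[n]_q}\sum_{L}\big(V_0 \subseteq L \subseteq V_1+L \subseteq \cdots \subseteq V_{n-1}+L \subseteq V_n\big)\widehat{\phantom{x}}$, the sum over all lines $L \subseteq \F_q^n$. Now $h \in \bB \subseteq G$ acts on $\C[G/B]$ (equivalently on flags) as an invertible linear map of $\F_q^n$ sending the flag of $gB$ to the flag of $(hg)B$, and it permutes the set of lines of $\F_q^n$ bijectively. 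Applying $h$ to each summand and using $h(U + U') = hU + hU'$ and $h(V_i) = (hV)_i$ (the $i$-th space of the flag attached to $hg$), the summand indexed by $L$ maps to the summand of $(hg)B\cdot\hatTq$ indexed by $hL$; since $L \mapsto hL$ is a bijection on lines and the ``remove the duplicate subspace'' operation is intrinsic to the flag, we get $h\big(gB\cdot\hatTq\big) = (hg)B \cdot \hatTq$. Multiplying on the right by $\mathcal{X}_{G/B}(q,\x)$ and invoking the first paragraph yields \eqref{eq:hinB-commutes}, and extending linearly over $\C(q,\x)$ gives the statement for arbitrary elements of $\C[G/B]$.

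The only genuinely delicate point is the interaction between the left $\bB$-action and the hat operation ``remove the duplicate subspace.'' One must be sure that for each line $L$, the repeated subspace in the chain for $gB$ occurs at the same position as the repeated subspace in the chain for $hL$ applied to $h(gB)$; this is immediate once one observes that $h$ is an isomorphism of filtered vector spaces, so $\dim$ and all incidences are preserved, hence $V_{j-1} + L = V_j$ if and only if $(hV)_{j-1} + hL = (hV)_j$. I expect this bookkeeping — confirming that the flag attached to $hg$ is exactly the image under $h$ of the flag attached to $g$, so that the two sums match term by term — to be the main (though routine) obstacle; everything else is the observation that left multiplication and right multiplication on $G/B$ commute and that $\mathcal{X}_{G/B}$ is a left-$\bB$-invariant diagonal operator.
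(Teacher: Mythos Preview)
Your proof is correct and follows the same overall strategy as the paper: decompose $\hatTqx = \hatTq \circ \mathcal{X}_{G/B}(q,\x)$, check that the diagonal weight operator $\mathcal{X}_{G/B}(q,\x)$ is invariant under left multiplication by $h\in\bB$ (because lower-triangular left multiplication preserves the row index of the topmost nonzero entry of the first column), and then verify that $\hatTq$ commutes with the left $G$-action.

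The only notable difference is in how the $\hatTq$ step is justified. You argue directly from the flag description \eqref{eq:r2tflags}, using that $h$ is a linear automorphism of $\F_q^n$ which permutes lines and preserves sums and dimensions, so the ``hat'' operation and the summation over lines are carried along bijectively. The paper instead invokes the structural fact that $\hatTq$ lies in $\HH_n(q)$, which is (by Iwahori) the centralizer of the left $G$-action on $\C[G/B]$; thus left multiplication by \emph{any} $h\in G$, not just $h\in\bB$, commutes with $\hatTq$ for free. Your argument is self-contained and avoids citing the centralizer characterization, at the cost of the routine bookkeeping you flag; the paper's argument is a one-line appeal to a standard fact and makes transparent that the restriction $h\in\bB$ is needed only for the weight step, not for $\hatTq$.
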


\begin{proof}
Our proof of \eqref{eq:hinB-commutes} involves two observations:
\begin{enumerate}
    \item For \emph{any} $h \in \GL_n(\F_q)$, we have that 
    \[ h \big( gB \cdot \hatTq \big) = \big( hgB \big) \cdot \hatTq, \]
    since $\hatTq$ is an element of $\HH_n(q)$, and the left action of $\GL_n(\F_q)$ on $\HH_n(q)$ commutes with the right (multiplication) action of $\HH_n(q)$ by \eqref{eq:defheckeonflagaction}.
    \item Suppose $h \in B^-$.
    Then 
    \[ [gB] \big( gB \cdot \mathcal{X}_{G/B}(q,\x) \big) = [hgB] \big( hgB \cdot \mathcal{X}_{G/B}(q,\x) \big).   \]
    This follows from the fact that 
    $\proj_{\symm_n}(gB) = \proj_{\symm_n}(hgB)$ and the definition of $\mathcal{X}_{G/B}(q,\x)$ in \eqref{equation.XGB}.
\end{enumerate}
Note that $\hatTqx$ is the composition of $\hatTq $ followed by $\mathcal{X}_{G/B}(q,\x)$. Letting $c_{gB}$ be 
coefficients in $\C$, by (1) above, we have that 
\begin{align*}
    h \big((gB) \cdot \hatTq\big) &= h \left( \sum_{g'B \in G/B} c_{g'B} \, g'B \right) = \sum_{g'B \in G/B} c_{g'B} (hg') B \\
    &= \big( hg B\big) \cdot \hatTq = \sum_{g''\in G/B} c_{g''B} g''B
\end{align*}
and therefore $c_{g'B} = c_{g''B}$ whenever $hg' = g''$, or in other words, $c_{g'B} = c_{hg'B}$.
It follows that
\begin{align*}
    h \big( gB \cdot \hatTqx\big)&= h \big( gB \cdot (\hatTq \circ \mathcal{X}_{G/B}(q,\x) ) \big) 
    = h \left( \sum_{g'B \in G/B} c_{g'B} \, g'B \cdot \mathcal{X}_{G/B}(q,\x) \right) \\
    &= \sum_{g'B \in G/B} c_{hg'B} (hg'B) \cdot \mathcal{X}_{G/B}(q,\x) \\
    &= \big( (hg)B \big) \cdot \hatTqx,
\end{align*}
as desired.
\end{proof}

\begin{cor} 
\label{cor:lumping}
If $\proj_{\symm_n}(gB) = \proj_{\symm_n}(g'B)$, then 
\begin{equation} \label{eq:psiequalflags} 
	\Psi(q,\x)_{gB} = \Psi(q,\x)_{g'B}.
\end{equation}
Moreover, the Markov chain generated by $\hatTqx$ lumps to the one generated by 
$\Tqx$, and thus for $\pi \in \symm_n$,
    \begin{equation}
    \label{eq:flaglumping} 
    \Psi(q,\x)_{\pi} = \sum_{\substack{gB \in G/B\\ \proj_{\symm_n}(gB) = \pi}} \Psi(q,\x)_{gB} = q^{\coinv(\pi)} \Psi(q,\x)_{gB}.
    \end{equation}
 Analogously, if $\x$ is $\m$-compatible, then $\Tqx$ lumps to $\wordTqx$, where $\x$ and $\xbar$ are related as in \cref{def:projandinclforwords}.
 Thus for $w \in W_\m$
     \begin{equation}
    \label{eq:permlumping}
     \Psi(q,\xbar)_w = \sum_{\substack{\pi \in \symm_n\\ \proj_{W_\m}(\pi) = w}} \Psi(q,\x)_\pi.
     \end{equation} 
\end{cor}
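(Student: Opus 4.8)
The plan is to deduce all three assertions from the two commutative diagrams of \cref{theorem.topcommutativediagram} and \cref{theorem.bottomcommutativediagram}, the symmetry statement of \cref{lemma:hinB-commutes}, and the general lumping principle recorded in \eqref{eq:lumpingdef}. I would begin with \eqref{eq:psiequalflags}. If $\proj_{\symm_n}(gB)=\proj_{\symm_n}(g'B)=\pi$, then both cosets lie in the double coset $[\pi]=\bB\pi B$, which is a single orbit of the left-multiplication action of $\bB$ on $G/B$, so $g'B=hgB$ for some $h\in\bB$. By \cref{lemma:hinB-commutes} the bijection $L_h\colon aB\mapsto haB$ of $G/B$ satisfies $L_h(aB\cdot\hatTqx)=L_h(aB)\cdot\hatTqx$; equivalently, the transition matrix of $\hatTqx$ is unchanged when rows and columns are relabelled simultaneously by $L_h$. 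Hence $L_h$ is an automorphism of the flag Markov chain and carries any stationary distribution to a stationary distribution. Since the $\bB$-orbits are exactly the fibers of $\proj_{\symm_n}$, symmetrizing over $\bB$ produces a stationary distribution that is constant on double cosets, and identifying this with $\Psi(q,\x)$ --- legitimate once the flag chain is known to have a unique stationary distribution, or directly from the explicit product formula of \cref{theorem.sd flags} --- gives \eqref{eq:psiequalflags}.

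Next I would show that $\hatTqx$ lumps onto $\Tqx$. Fix $gB$, set $\pi:=\proj_{\symm_n}(gB)$, write $gB\cdot\hatTqx=\sum_{g'B}(\hatTqx)_{gB,g'B}\,g'B$ in the coset basis, and apply the commuting square $\proj_{\symm_n}(gB\cdot\hatTqx)=\pi\cdot\Tqx$ from \cref{theorem.topcommutativediagram}. Comparing the coefficient of a fixed $\pi'\in\symm_n$ on both sides yields
\[
\sum_{g'B \,:\, \proj_{\symm_n}(g'B)=\pi'}(\hatTqx)_{gB,g'B}=(\Tqx)_{\pi,\pi'},
\]
and since the right-hand side depends only on $\pi$, this is precisely the lumping condition for the surjection $p=\proj_{\symm_n}$. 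The principle \eqref{eq:lumpingdef} then gives $\Psi(q,\x)_\pi=\sum_{g'B\in[\pi]}\Psi(q,\x)_{g'B}$, and combining with \eqref{eq:psiequalflags} and $|[\pi]|=q^{\coinv(\pi)}$ yields the remaining equality $\Psi(q,\x)_\pi=q^{\coinv(\pi)}\Psi(q,\x)_{gB}$ of \eqref{eq:flaglumping}.

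The word case is entirely parallel, now using \cref{theorem.bottomcommutativediagram}: apply the algebra homomorphism $\proj_{W_\m}$ (which sends $\sigma\mapsto\destd_\m(\sigma)$ and rewrites the scalars $\x\mapsto\xbar$) to $\pi\cdot\Tqx$, use $\proj_{W_\m}(\pi\cdot\Tqx)=\destd_\m(\pi)\cdot\wordTqx$, match coefficients of each $w'\in W_\m$ to read off the lumping condition for $p=\proj_{W_\m}$, and invoke \eqref{eq:lumpingdef} to obtain \eqref{eq:permlumping}. The one point needing care is that the transition weights of $\Tqx$ must be consistently transported through $\proj_{W_\m}$; this is exactly where $\m$-compatibility of $\x$ is used, via \cref{lemma:howtomapybar}, which guarantees $\proj_{W_\m}$ is well defined on the coefficient field.

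I expect the only real obstacle to be \eqref{eq:psiequalflags} --- making rigorous the passage from ``$L_h$ is a symmetry of the chain'' to ``$\Psi(q,\x)$ is constant on double cosets'' without prematurely invoking uniqueness of the flag stationary distribution; one either symmetrizes over $\bB$ and appeals to the existence half of \cref{theorem.sd flags}, or reads the claim off its explicit product formula. Once \eqref{eq:psiequalflags} is in hand, the two lumping statements are routine consequences of the commutative diagrams together with \eqref{eq:lumpingdef}.
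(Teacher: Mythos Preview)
Your proposal is correct and matches the paper's approach: the paper likewise uses \cref{lemma:hinB-commutes} together with uniqueness of the flag stationary distribution to obtain \eqref{eq:psiequalflags}, and then invokes the commutative squares of \cref{theorem.topcommutativediagram,theorem.bottomcommutativediagram} and the lumping principle \eqref{eq:lumpingdef} for the remaining claims. Your caution about circularity in invoking uniqueness is well placed --- the paper does invoke it directly even though irreducibility of the flag chain is only established later in the proof of \cref{theorem.sd flags} --- so your suggested workarounds (symmetrizing over $\bB$, or reading constancy directly from the explicit product formula) are a genuine improvement in rigor.
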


\begin{proof}
We prove \eqref{eq:psiequalflags} using \cref{lemma:hinB-commutes}. If $\proj_{\symm_n}(gB) = \proj_{\symm_n}(g'B)$, then there is some 
$h\in \bB$ such that $hg = g'$. By definition of the stationary state,
\begin{align*}
    \Psi(q,\x) \cdot \hatTqx &= \Psi(q,\x). 
\end{align*}
Applying \cref{lemma:hinB-commutes} and extending by linearity, we have
\[  \bigg( h \Psi(q,\x) \bigg) \cdot \hatTqx =  h \bigg(  \Psi(q,\x) \cdot \hatTqx \bigg) = h \Psi(q,\x).  \]
Since the stationary distribution is unique, this implies that $h \Psi(q,\x) = \Psi(q,\x)$, from which it follows that 
\[ \Psi(q,\x)_{gB} = \Psi(q,\x)_{hgB} = \Psi(q,\x)_{g'B}.\]

The first equality in \eqref{eq:flaglumping} follows from the definition of lumping, taking $p = \proj_{\symm_n}$. The second equality then follows 
from \eqref{eq:psiequalflags} by noting that the size of the set $[\pi]$ is $q^{\coinv(\pi)}$. Similarly, when $\x$ is $\m$ compatible 
(so $\proj_{W_\m}$ is well-defined) \eqref{eq:permlumping} follows from the definition of lumping, taking $p = \proj_{W_\m}$.
\end{proof}

The inclusion maps $\incl_{G/B}$ and $\incl_{\symm_n}$ allow us to deduce spectral properties of $\Tqx$ and $\wordTqx$ from $\hatTqx$. 
Let $\Lambda_{G/B}$ be the set of eigenvalues of $\hatTqx$, and similarly for $\Lambda_{\symm_n}$ and $\Lambda_{W_\m}$. (Note that this set 
says nothing about eigenvalue multiplicity). 

\begin{cor}\label{cor:diagonalizability}
Whenever $q$ is a prime power,
we have the following containment of sets:
\[ \Lambda_{W_\m} \subseteq \Lambda_{\symm_n} \subseteq \Lambda_{G/B}.\]
Moreover, if $\hatTqx$ is diagonalizable, then so are $\Tqx$ and $\wordTqx$.
\end{cor}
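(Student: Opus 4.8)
The plan is to use only the right-hand (inclusion) halves of the two commutative diagrams in \cref{theorem.topcommutativediagram,theorem.bottomcommutativediagram}, together with the injectivity of $\incl_{G/B}$ and $\incl_{\symm_n}$; the projection maps play no role here. Injectivity is immediate from the definitions: $\incl_{G/B}$ sends a basis permutation $\pi$ to an element supported on the single double coset $[\pi]$, and $\incl_{\symm_n}$ sends a basis word $w$ to an element supported on $\symm_\m\cdot\std(w)$, and in each case these supports are pairwise disjoint as $\pi$ (resp. $w$) varies, so the maps are injective.

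For the eigenvalue containment I would argue as follows. Suppose $\lambda\in\Lambda_{\symm_n}$, witnessed by a nonzero $v\in\C(q,\x)[\symm_n]$ with $v\cdot\Tqx=\lambda v$. Since $\incl_{G/B}$ is $\C(q,\x)$-linear, applying it and using the right-hand diagram of \cref{theorem.topcommutativediagram} gives $\incl_{G/B}(v)\cdot\hatTqx=\incl_{G/B}(v\cdot\Tqx)=\lambda\,\incl_{G/B}(v)$, and $\incl_{G/B}(v)\neq0$ by injectivity; hence $\lambda\in\Lambda_{G/B}$. The same computation with $\incl_{\symm_n}$ and \cref{theorem.bottomcommutativediagram} yields $\Lambda_{W_\m}\subseteq\Lambda_{\symm_n}$, with the caveat that $\incl_{\symm_n}$ is not $\C(q,\x)$-linear but semilinear over the field embedding $\xo_j\mapsto[m_j]_q x_{n_j}$ of \cref{def:projandinclforwords}, so a $\wordTqx$-eigenvalue (a priori an element of $\C(q,\xbar)$) is to be read inside $\C(q,\x)$ via that embedding, consistently with the explicit formulas in \cref{thm:evalues Sn,thm:evalues words}.

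For diagonalizability, the key observation is that the same intertwining relation $\incl_{G/B}(v\cdot\Tqx)=\incl_{G/B}(v)\cdot\hatTqx$ shows that $W:=\operatorname{im}(\incl_{G/B})$ is a $\hatTqx$-invariant subspace of $\C(q,\x)[G/B]$ and that $\incl_{G/B}$ is an isomorphism onto $W$ intertwining $\Tqx$ with $\hatTqx|_W$. I would then invoke the elementary fact that the restriction of a diagonalizable operator to an invariant subspace is again diagonalizable over the same field: the minimal polynomial of $\hatTqx|_W$ divides that of $\hatTqx$, which splits into distinct linear factors when $\hatTqx$ is diagonalizable. Thus $\hatTqx$ diagonalizable $\Rightarrow$ $\hatTqx|_W$ diagonalizable $\Rightarrow$ $\Tqx$ diagonalizable. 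Running the same argument through $\incl_{\symm_n}$, whose image is a $\Tqx$-invariant subspace intertwining $\wordTqx$ with the restriction of $\Tqx$, then gives $\Tqx$ diagonalizable $\Rightarrow$ $\wordTqx$ diagonalizable. Since $\hatTqx$ is defined only when $q$ is a prime power, this entire chain is available precisely under the hypothesis $q=p^m$, in which case $\C(q)=\C$ and no subtlety about the ground field remains.

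The step I expect to need the most care is the bookkeeping across the two scalar fields $\C(q,\xbar)$ and $\C(q,\x)$ in the words layer: one must know that diagonalizability of $\wordTqx$ over $\C(q,\xbar)$ is equivalent to diagonalizability after the scalar extension $\C(q,\xbar)\hookrightarrow\C(q,\x)$, $\xo_j\mapsto[m_j]_q x_{n_j}$. This holds because the eigenvalues of $\wordTqx$ already lie in $\C(q,\xbar)$ by \cref{thm:evalues words}, so diagonalizability is detected by the minimal polynomial having distinct roots, a condition insensitive to field extension; and the extension in question is exactly the $\m$-compatibility requirement on $\x$, so the invariant-subspace argument applies verbatim. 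Since every $\xbar$ arises from some $\m$-compatible $\x$, the conclusion for $\wordTqx$ then holds in general, and everything else reduces to the standard linear algebra of injective intertwiners and restrictions of diagonalizable operators to invariant subspaces.
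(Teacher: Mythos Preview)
Your argument is essentially the paper's own: use the inclusion halves of \cref{theorem.topcommutativediagram,theorem.bottomcommutativediagram} to realize $\Tqx$ and $\wordTqx$ as restrictions of $\hatTqx$ and $\Tqx$ to invariant subspaces, then invoke the fact that restrictions of diagonalizable operators to invariant subspaces are diagonalizable. Your write-up is more explicit about injectivity and about the minimal-polynomial mechanism than the paper's terse proof, but the underlying idea is identical.

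One caution: in your final paragraph you appeal to \cref{thm:evalues words} to locate the eigenvalues of $\wordTqx$ inside $\C(q,\xbar)$, but that theorem is proved in \cref{ss:eigenvalue word} \emph{using} this corollary, so the reference is circular. Fortunately the appeal is unnecessary for the paper's purposes: the only downstream use of the diagonalizability clause (in the proofs of \cref{thm:evalues Sn,thm:evalues words}) is that the operator satisfies a polynomial with distinct linear factors, and that property is insensitive to which extension field you work over. So you can safely drop the field-bookkeeping paragraph, or replace the circular citation by the observation that ``diagonalizable'' here means ``minimal polynomial has no repeated roots,'' which transfers along the field embedding automatically.
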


\begin{proof}
This claim follows from the fact that by 
\cref{theorem.topcommutativediagram,theorem.bottomcommutativediagram}, 
\begin{equation}
\label{eq:inclusioncommutes} 
\begin{split}
	\incl_{G/B}(\pi) \cdot \hatTqx &= \incl_{G/B} (\pi \cdot \Tqx),\\
	\incl_{\symm_n}(w) \cdot \Tqx &= \incl_{\symm_n}(w \cdot \wordTqx).
\end{split}
\end{equation}
We can view the images of both inclusion maps as subspaces of $\C(q,\x)[G/B]$ and $\C(q,\x)[\symm_n]$, respectively. Write these subspaces as 
$\incl_{G/B}(\C[\symm_n])$ and $\incl_{\symm_n}(\C[W_\m])$. Equation \eqref{eq:inclusioncommutes} thus implies that $\hatTqx$ and $\Tqx$ 
act stably on $\incl_{G/B}(\C[\symm_n])$ and $\incl_{\symm_n}(\C[W_\m])$, respectively. Hence, the eigenvalues and diagonalizability of $\Tqx$ 
and $\wordTqx$ can be deduced from that of $\hatTqx$ in the context that it is well-defined, i.e. when $q = p^m$ for a prime number $p$. 
\end{proof}

We will expand upon \cref{cor:diagonalizability} once we have computed $\Lambda_{G/B}$ in \cref{section.eigenvalues}. In fact, we 
will be able to conclude that $\Tqx$ and $\wordTqx$ are diagonalizable for arbitrary, generic $q$ with a bit more work.

%%%%%%%%%%%%%%%%%%%%%%%%%%%%%%%%%%%%%%%%%%%%%%%%%%%%%%%%%%%%%%
\subsection{Connection to a Markov chain from semigroup theory}
\label{ss:Brown LRB}
Having described the relationships between the Markov chains generated by 
$\hatTqx$, $\Tqx$ and $\wordTqx$, our next goal is to link these processes to a 
Markov chain on $\C[G/B]$ arising from the theory of \emph{left regular bands}. 

A left regular band is a monoid $M$ such that for any elements $a,b \in M$, one has 
\begin{equation} \label{eq:LRBdef}
aba = ab.\end{equation}
Note that this implies that $a^2 = a$ for any $a \in M$. One obtains a \emph{left regular band algebra} ${\bf{k}}M$ by taking ${\bf k}$ linear 
combinations of the elements of $M$ for ${\bf k}$ a commutative ring. 

In seminal work of Bidigare--Hanlon--Rockmore~\cite{bidigare_hanlon_rockmore.1999} 
and Brown~\cite{Brown}, the authors develop a combinatorial technique to compute the spectrum of any Markov chain arising as multiplication in a left 
regular band. This will be an important tool in our work, even though the Markov chains we study do not arise directly in this manner.  

The relevant left regular band in our context is called the \emph{$q$-free left regular band}, $\mathcal{F}_n(q)$, and we will take ${\bf k} = \C$. 
The elements of $\mathcal{F}_n(q)$ are \emph{partial flags} $(V_0 \subseteq V_1 \subseteq\cdots \subseteq V_i)$ where $V_j \subseteq \F_q^n$ has 
dimension $j$. Multiplication in $\mathcal{F}_n(q)$ is defined as follows:
\[ 
	(V_0 \subseteq V_1 \subseteq \cdots \subseteq V_i) \cdot (W_0 \subseteq W_1 \subseteq \cdots \subseteq W_k) 
	= (V_0 \subseteq V_1 \subseteq \cdots \subseteq V_i \subseteq V_i + W_1 \subseteq V_i + W_2 \subseteq \cdots \subseteq V_i+W_k)^{\widehat{}},  
\]
where $\, {}^{\widehat{}}\,$ denotes removing any repeated subspace in the flag. One can check that this satisfies~\eqref{eq:LRBdef}. 

\begin{example}
Suppose $q=2$ and $n=4$, and let 
\begin{align*}
    V &= \big(\langle e_1 + e_2 \rangle, \langle e_1 + e_2, e_2 \rangle\big),\\
    W &= \big( \langle e_2 \rangle, \langle e_3 \rangle \big).
\end{align*}
Then 
\begin{align*}
    V \cdot W &= \big(\langle e_1 + e_2 \rangle, \langle e_1 + e_2, e_2 \rangle, \langle e_3, e_1 + e_2, e_2 \rangle \big),\\
    W \cdot V &=  \big( \langle e_2 \rangle, \langle e_3 \rangle, \langle e_1 + e_2,e_2, e_3 \rangle \big).
\end{align*}
Hence we see this product is not commutative.
\end{example}

The Markov chain of interest is generated by an element of $\mathcal{F}_n(q)$, defined by Brown~\cite{Brown} as 
\begin{equation} \label{eq:Browntsetlin}
\sum_{\text{Lines } L \subseteq \F_q^n} x_L \ L,\end{equation}
where $x_L$ are coefficients in $[0, 1]$ summing to $1$. We are interested in two specializations of \eqref{eq:Browntsetlin}:
\begin{align}
    \tTq:=& \frac{1}{[n]_q} \sum_{\text{Lines } L \subseteq \F_q^n}  L, \label{eq:tTq} \\
    \tTqx:=& \sum_{\text{Lines } L \subseteq \F_q^n} \frac{x_i}{q^{n-i}} \ L \qquad \text{for } L = e_i + \sum_{k>i} c_k e_k. \label{eq:tTqx}
\end{align}
In other words, \eqref{eq:tTq} sets $x_L = 1/[n]_q$ for all lines $L$ and \eqref{eq:tTqx} determines the probabilities via $\mathcal{X}_{G/B}(q,\x)$.

The definition of $\mathcal{F}_n(q)$ then immediately implies that the action of $\tTq$ and $\tTqx$ on a complete flag 
$F = (V_0 \subseteq V_1 \subseteq \cdots \subseteq V_n = \F_q^n)$ by left multiplication is given by
\begin{align}
    \label{eq:LRBflags}
        \tTq \cdot  F &= \sum_{\text{lines } L \subseteq \F_q^n}  (V_0 \subseteq L \subseteq L + V_1 \subseteq \cdots \subseteq L+ V_{n-1} 
        \subseteq V_n)^{\widehat{}}, \\
        \tTqx \cdot  F &= \sum_{\text{lines } L \subseteq \F_q^n} \big( (V_0 \subseteq L \subseteq L + V_1 \subseteq \cdots \subseteq L+ V_{n-1} \subseteq V_n)^{\widehat{}}\ \big)\cdot \mathcal{X}_{G/B}(q,\x).  \label{eq:weightedLRBflags}
\end{align}
Note that we have seen the right-hand-side of \eqref{eq:LRBflags} and \eqref{eq:weightedLRBflags} before, in \eqref{eq:r2tflags} 
and \eqref{eq:r2tflagsweighted}, when we claimed that they describe the action of $\hatTq$ and $\hatTqx$, respectively. Since both 
$\hatTq$ and $\hatTqx$ are defined in $\HH_n(q)$, it is not immediately obvious that this is true, but we will now justify this claim:

\begin{prop}\label{prop:flagactions}
    Let $\tTq$ and $\tTqx$ be as in \eqref{eq:LRBflags} and \eqref{eq:weightedLRBflags}, and let $F = (V_0 \subseteq V_1 \subseteq \cdots \subseteq V_n)$ 
    be a flag in $\F_q^n$. Then the following identities hold:
    \begin{enumerate}
        \item {\rm \cite[Prop. 4.10]{qr2r}:}
        One has $\tTq \cdot F = F \cdot \hatTq$.
        \item One has $\tTqx \cdot F = F \cdot \hatTqx$.
    \end{enumerate}
Hence the matrices $\tTqx$ and $\hatTqx$ are transposes of one another.
\end{prop}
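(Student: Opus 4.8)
The plan is to deduce part~(2) from part~(1) (which is \cite[Prop.~4.10]{qr2r}) together with the definition $\hatTqx = \hatTq\,\mathcal{X}_{G/B}(q,\x)$ as a right operator, and then to extract the transpose statement by comparing matrix entries in the basis of complete flags. For a flag $F = (V_0 \subseteq \cdots \subseteq V_n)$, I would first write $F \cdot \hatTqx = \big(F \cdot \hatTq\big) \cdot \mathcal{X}_{G/B}(q,\x)$ and apply part~(1) --- equivalently \eqref{eq:r2tflags} --- to expand
\[
F \cdot \hatTq = \sum_{\text{lines } L \subseteq \F_q^n} (V_0 \subseteq L \subseteq V_1 + L \subseteq \cdots \subseteq V_{n-1} + L \subseteq V_n)^{\widehat{}}.
\]
Extending $\mathcal{X}_{G/B}(q,\x)$ linearly over this sum, the key point is that the scalar it attaches to each summand depends only on the line $L$: the first nontrivial subspace of $(V_0 \subseteq L \subseteq V_1+L \subseteq \cdots)^{\widehat{}}$ is $L$ itself, and by \eqref{equation.XGB} the weight $\mathcal{X}_{G/B}(q,\x)$ assigns to a flag is a function of its first line alone, equal to $[L]\big(L \cdot \mathcal{X}_{G/B}(q,\x)\big) = x_i/q^{n-i}$ when $L = \langle e_i + \sum_{k>i} c_k e_k\rangle$. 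Substituting these scalars reproduces exactly the right-hand side of \eqref{eq:weightedLRBflags}, which is $\tTqx \cdot F$; this proves part~(2).

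For the transpose statement, fix the basis of $\C[G/B]$ given by complete flags. On the one hand, writing $F \cdot \hatTqx = \sum_{F'} (\hatTqx)_{F,F'}\,F'$ defines the transition matrix of the $q$-Tsetlin library on flags (rows indexed by the current flag). On the other hand, viewing $\tTqx$ as an element of $\C\mathcal{F}_n(q)$ acting by left multiplication, $\tTqx \cdot F = \sum_{F'} (\tTqx)_{F',F}\,F'$ records its matrix in the same basis. Part~(2) says these two expansions coincide for every $F$, hence $(\hatTqx)_{F,F'} = (\tTqx)_{F',F}$ for all $F,F'$, i.e. $\tTqx = (\hatTqx)^{T}$.

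Given part~(1), the argument is essentially bookkeeping, and I expect the only point requiring care to be the compatibility of the two descriptions of the weight operator --- the line version \eqref{eq:weightfunctionlines} and the coset version \eqref{equation.XGB} --- namely that the weight of $(V_0 \subseteq L \subseteq \cdots)^{\widehat{}}$ is genuinely a function of $L$ alone and equals $x_i/q^{n-i}$. This is precisely the identity recorded immediately after \eqref{equation.XGB}, so it can be invoked rather than reproved, and the remaining steps are routine linearity and a relabeling of indices.
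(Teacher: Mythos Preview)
Your proposal is correct and follows the same approach as the paper: invoke part~(1) from \cite{qr2r}, then observe that applying $\mathcal{X}_{G/B}(q,\x)$ to each summand in $F\cdot\hatTq$ introduces precisely the scalar $x_i/q^{n-i}$ determined by the line $L$, matching the coefficients in \eqref{eq:weightedLRBflags}. Your treatment is simply more explicit than the paper's one-line version, and your derivation of the transpose statement from part~(2) by comparing matrix entries is a reasonable unpacking of the ``Hence'' clause.
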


\begin{proof}
Recall that $\hatTqx = \hatTq \cdot \mathcal{X}_{G/B}(q,\x)$, where 
$F \cdot \mathcal{X}_{G/B}(q,\x)=  \frac{x_i}{q^{n-i}} \, F$ if $V_1 = e_i + \sum_{j = i+1}^n c_j e_j$ for $c_j \in \F_q$.
Part (1) appears in \cite[Prop 4.10]{qr2r}; comparing the definition of $\mathcal{X}_{G/B}(q,\x)$ with the coefficients of each term on the right-hand-side 
of \eqref{eq:weightedLRBflags} then gives (2). 
\end{proof}

The purpose of introducing $\tTqx$ is that the theory of left regular bands is a powerful tool in understanding the characteristic polynomials 
and stationary distributions of Markov chains arising in this context. Combining \cref{prop:flagactions} with 
\cref{theorem.topcommutativediagram,theorem.bottomcommutativediagram} will allow us to translate these properties to 
$\hatTqx, \Tqx$ and $\wordTqx$, despite them \emph{not} coming directly from a known left regular band. 

%%%%%%%%%%%%%%%%%%%%%%%%%%%%%%%%%%%%%%%%%%%%%%%%%%%%%%%%%%%%
\section{Characteristic polynomial of the $q$-Tsetlin library}
\label{section.eigenvalues}
%%%%%%%%%%%%%%%%%%%%%%%%%%%%%%%%%%%%%%%%%%%%%%%%%%%%%%%%%%%%

In this section, we compute the minimal and characteristic polynomials of $\hatTqx$ (\cref{ss:eigenvalue flags}) $ \Tqx$ (\cref{ss:eigenvalue perm}) and $\wordTqx$ (\cref{ss:eigenvalue word}). To do so, we first compute 
these polynomials for $\tTqx$ using the theory of left regular bands in \cref{ss:eigenvalue flags}. 

%%%%%%%%%%%%%%%%%%%%%%%%%%%%%%%%%%%%%%%%%%%%%%%%%%%%%%%%%%%%
\subsection{Characteristic polynomial for flags}
\label{ss:eigenvalue flags}

Bidigare--Hanlon--Rockmore~\cite{bidigare_hanlon_rockmore.1999} and Brown \cite{Brown} developed a powerful theory for computing 
the characteristic polynomial of a Markov chain $\mathcal{M}$ arising from a left regular band. In particular, we consider $a$ to be an 
element in a left regular band algebra ${\bf k} M$, and define a Markov chain $\mathcal{M}$ generated by the action of $a$ on certain ``top dimensional'' 
elements in ${\bf k} M$ via left-multiplication. Since this perspective and its techniques are explained eloquently 
in~\cite{bidigare_hanlon_rockmore.1999,Brown} in full generality, we restrict ourselves to detailing only what is essential to understand the $q$-Tsetlin library. 

In our context, the main characters are as follows:
\begin{enumerate}
    \item The left-regular-band algebra is $\C \mathcal{F}_n(q)$ from \cref{ss:Brown LRB};
    \item The element $a$ is $\tTqx$ from \eqref{eq:tTqx};
    \item The top dimensional elements are complete flags in $\mathcal{F}_n(q)$, and the action of $\tTqx$ is thus described in \eqref{eq:weightedLRBflags}.
\end{enumerate}

In \cite[Section 5.2]{Brown}, Brown shows that the eigenvalues of the Markov chain generated by $\tTqx$ acting on complete flags are indexed by subspaces 
$V \subseteq \F_q^n$; write this eigenvalue as $\lambda_V$, and its multiplicity as $m_V$. Then
\begin{align}
\label{eq:eigenvaluebasic}    \lambda_V &= \sum_{\text{Lines } L \subseteq V} [L] \big(L \cdot \mathcal{X}_{G/B}(q,\x)\big),\\
 \label{eq:multiplicitybasic}   m_V& = d_{n-\dim(V)}(q),
\end{align}
where $d_n(q)$ are the $q$-derangement numbers in \eqref{eq:qderangements}.

With a bit more work, one can show that with the choice of weights given by $\mathcal{X}_{G/B}$, the eigenvalues of $\tTqx$ are indexed by 
subsets $S \subseteq [n]$. We again use the change-of-coordinates $y_i:= x_i/q^{n-i}$ from \eqref{eq:ydef}.

\begin{prop}\label{prop:flageigenvalues}
The eigenvalues of $\tTqx$ are given by $\lambda_{S}(q,\x)$ for every 
$S = \{i_1 > i_2 > \cdots > i_k\}$, where 
\[  \lambda_S(q,\x) = \sum_{j=1}^k \frac{x_{i_j}}{q^{n-i_j-j+1}}. \]
The multiplicity of $\lambda_S(q,\x)$ is 
\[ m_{G/B}(S)= \begin{cases} d_{n-k}(q) \  q^{(n-i_1) + (n-1-i_2) + \cdots + (n-k+1-i_k)} & k\neq n \\
1 & k=n.
\end{cases}\] 
Moreover, $\tTqx$ is diagonalizable whenever each $x_i$ is real and nonnegative.
\end{prop}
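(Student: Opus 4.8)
The plan is to specialize Brown's description \eqref{eq:eigenvaluebasic}--\eqref{eq:multiplicitybasic} of the spectrum of a left-regular-band walk on chambers to the line-weights prescribed by $\mathcal X_{G/B}(q,\x)$, and then to rewrite the answer, which a priori is indexed by subspaces $V\subseteq\F_q^n$, in terms of subsets $S\subseteq[n]$. First I would compute $\lambda_V$ for a fixed subspace $V$. Put $V$ in reduced row echelon form and let its set of pivot positions, listed in decreasing order, be $S(V)=\{i_1>i_2>\cdots>i_k\}$, where $k=\dim V$. By \eqref{eq:weightfunctionlines} a line $L$ contributes $[L]\bigl(L\cdot\mathcal X_{G/B}(q,\x)\bigr)=x_i/q^{n-i}$, where $i$ is the pivot of $L$, i.e.\ the position of its topmost nonzero entry. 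Expanding a nonzero vector of $V$ in the echelon basis $v_{i_1},\dots,v_{i_k}$, its topmost nonzero entry lies in position $i_m$, where $m$ is the \emph{largest} index for which the coefficient on $v_{i_m}$ is nonzero; hence there are exactly $q^{m-1}$ lines in $V$ with pivot $i_m$ (free coefficients on $v_{i_1},\dots,v_{i_{m-1}}$, normalized coefficient on $v_{i_m}$, zero coefficients on the rest). Summing over $m$ using \eqref{eq:eigenvaluebasic},
\[
\lambda_V=\sum_{m=1}^{k}q^{m-1}\,\frac{x_{i_m}}{q^{n-i_m}}=\sum_{m=1}^{k}\frac{x_{i_m}}{q^{n-i_m-m+1}}=\lambda_{S(V)}(q,\x).
\]
Since every $S\subseteq[n]$ arises as $S(V)$ for some $V$ (take $V=\operatorname{span}\{e_i:i\in S\}$), the eigenvalues of $\tTqx$ are exactly the $\lambda_S(q,\x)$, with $S=\emptyset$ giving $0$.

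Next I would collect multiplicities. By \eqref{eq:multiplicitybasic} the characteristic polynomial of $\tTqx$ is $\prod_{V}\bigl(t-\lambda_V\bigr)^{d_{n-\dim V}(q)}$, the product running over all subspaces $V\subseteq\F_q^n$; grouping factors by $S=S(V)$, the exponent of $t-\lambda_S(q,\x)$ is $d_{n-|S|}(q)\cdot N(S)$, where $N(S)$ is the number of subspaces with pivot set $S$. Counting free entries in a reduced echelon matrix with pivot columns $S=\{i_1>\cdots>i_k\}$ gives $N(S)=q^{e(S)}$, where
\[
e(S)=\sum_{m=1}^{k}\bigl(n-i_m-m+1\bigr)=(n-i_1)+(n-1-i_2)+\cdots+(n-k+1-i_k);
\]
thus the exponent equals $d_{n-k}(q)\,q^{e(S)}=m_{G/B}(S)$ when $k<n$, while for $k=n$ the only subspace is $\F_q^n$, $e([n])=0$ and $d_0(q)=1$, giving multiplicity $1$. (As a sanity check, $\sum_{|S|=k}q^{e(S)}=\binom{n}{k}_q$ and $\sum_{k=0}^{n}\binom{n}{k}_q d_{n-k}(q)=[n]_q!$, the number of complete flags, consistent with $\tTqx$ being diagonalizable.)

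Finally, for diagonalizability: if every $x_i\ge 0$ then the line-weights $x_L=x_i/q^{n-i}$ assigned by $\mathcal X_{G/B}(q,\x)$ are nonnegative, and after rescaling (the case $\x=0$ being trivial) we may assume $\sum_L x_L=1$; the Markov chain generated by $\tTqx$ is then precisely the left-regular-band random walk on the chambers of $\mathcal F_n(q)$ driven by a probability distribution on its generating lines, and such walks are diagonalizable by \cite[Section~5]{Brown} (cf.\ \cite{bidigare_hanlon_rockmore.1999}). I expect this last step to be the main point: it is where nonnegativity of the $x_i$ is genuinely used, and it rests on Brown's semigroup machinery rather than on exhibiting an explicit eigenbasis. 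The earlier steps are routine once conventions are fixed — the only place that really needs care is verifying that the powers of $q$ produced by the pivot counts match the exponents appearing in $\lambda_S(q,\x)$ and in $m_{G/B}(S)$ on the nose.
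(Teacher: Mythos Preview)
Your proposal is correct and follows essentially the same route as the paper: specialize Brown's formulas \eqref{eq:eigenvaluebasic}--\eqref{eq:multiplicitybasic} to the weights $\mathcal X_{G/B}(q,\x)$, observe that $\lambda_V$ depends only on the pivot set $S(V)$ of $V$, and count subspaces with a given pivot set to obtain the multiplicity. Your computation of the lines in $V$ with pivot $i_m$ via expansion in the echelon basis is in fact a cleaner justification of $\lambda_V=\sum_m q^{m-1}y_{i_m}$ than the paper's own phrasing, and the added sanity check $\sum_{|S|=k}q^{e(S)}=\binom{n}{k}_q$, $\sum_k\binom{n}{k}_q d_{n-k}(q)=[n]_q!$ is a nice touch.
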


\begin{proof}
Fix $S = \{ i_1 > i_2 > \cdots > i_k \}$ and $e_1, \ldots, e_n$ as the standard basis of $\F_q^n$. 

We claim that the eigenvalue $\lambda_V$ from \eqref{eq:eigenvaluebasic} coincides with $\lambda_S(q,\x)$ whenever $V$ is spanned by elements of the form 
\begin{equation} \label{eq:vdef}
v_{i_j}:= e_{i_j} + \sum_{\substack{\ell>i_j \\ \ell \neq i_1 \neq i_2 \neq \cdots \neq i_{j-1}}} c_\ell  \, e_\ell 
\end{equation}

To see why, first recall that $L \cdot \mathcal{X}_{G/B}(q,\x) = y_i \, L= x_i/q^{n-i}\, L $ for $L = e_i + \sum_{j > i} c_j e_j$. There are $q^{n-i_1}$ 
lines starting with $e_{i_1}$ with weight $y_{i_1}$, $q^{n-1-i_2}$ lines starting with $e_{i_2}$, whose span is linearly independent to the above line, with weight 
$y_{i_2}$, and more generally $q^{n-1-i_j}$ lines starting with $e_{i_j}$. Thus,
\[
\lambda_V = y_{i_1} + q y_{i_2} + \cdots + q^{k-1} y_{i_k} = \lambda_S(q,\x).
\]
To prove the desired multiplicities, by \eqref{eq:multiplicitybasic}, it remains to count the number of subspaces $V$ giving $\lambda_V = \lambda_S(q,\x)$. 
By our above argument, $\lambda_V = \lambda_S$ if and only if $V = \langle v_{i_1}, \ldots, v_{i_k} \rangle$. When $k<n$, there are precisely 
\[ 
	q^{(n-i_1) + (n-1-i_2) + \cdots + (n-k+1-i_k)} 
\] 
such subspaces, corresponding to picking the coefficients $c_\ell$ in \eqref{eq:vdef}. On the other hand, if $k=n$, then there is only one subspace 
spanning $\F_q^n$, and so $m_{G/B}({\F_q^n}) = m_{G/B}([n]) = 1$.

Finally, the diagonalizability of $\tTqx$ follows from basic properties in \cite[Theorem 5, \S 8]{Brown}, since $\x$ is a tuple of 
weights, i.e. nonnegative real numbers.
\end{proof}

Recall that by \cref{prop:flagactions}, the transition matrix $\hatTqx$ is the transpose of the transition matrix for $\tTqx$. 
We thus immediately obtain the following.

\begin{cor}\label{cor:flageigenvalues}
    Let $q$ be a prime power.
    The eigenvalues and multiplicities of $\hatTqx$ are given by $\lambda_S(q,\x)$ and $m_{G/B}(S)$ as in 
    \cref{prop:flageigenvalues}. Moreover, $\hatTqx$ is diagonalizable.
\end{cor}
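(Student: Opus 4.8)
The plan is to deduce everything directly from \cref{prop:flageigenvalues} together with \cref{prop:flagactions}. The first assertion—that the eigenvalues of $\hatTqx$ are exactly the $\lambda_S(q,\x)$ and the multiplicities are exactly $m_{G/B}(S)$—is immediate: by \cref{prop:flagactions}, the transition matrix of $\hatTqx$ (acting on $\C(q,\x)[G/B]$ by right multiplication on coset representatives) is the transpose of the matrix of $\tTqx$ (acting on complete flags by left multiplication in $\mathcal{F}_n(q)$). A matrix and its transpose have the same characteristic polynomial, hence the same eigenvalues with the same algebraic multiplicities. So I would first state this one-line reduction and cite \cref{prop:flageigenvalues} for the explicit list of $\lambda_S(q,\x)$ and the count $m_{G/B}(S)$.

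For the diagonalizability claim, the point is that a square matrix is diagonalizable if and only if its transpose is: if $A = PDP^{-1}$, then $A^{T} = (P^{-1})^{T} D P^{T} = (P^{T})^{-1} D\,(P^{T})$. \cref{prop:flageigenvalues} already gives that $\tTqx$ is diagonalizable whenever each $x_i$ is a nonnegative real number, invoking \cite[Theorem 5, \S 8]{Brown}. The only caveat is that $\hatTqx$, being built from Iwahori's action on $\C[G/B]$ for $G = \GL_n(\F_q)$, is only defined when $q$ is a prime power; so I would phrase the conclusion as: for $q = p^m$ with $p$ prime and the $x_i$ nonnegative reals, $\hatTqx$ is diagonalizable because its transpose $\tTqx$ is. I would also remark that, as a sanity check, the multiplicities $m_{G/B}(S)$ sum to $[n]_q! = \dim \C(q,\x)[G/B]$, consistent with diagonalizability.

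There is essentially no obstacle here: the corollary is a formal consequence of \cref{prop:flagactions} (the transpose relationship) and \cref{prop:flageigenvalues} (the spectral data and diagonalizability of $\tTqx$). The only thing to be careful about is not to overclaim—the statement is restricted to $q$ a prime power precisely because the flag model requires it, and the diagonalizability inherited from Brown's theorem needs the weights $x_i$ to be genuine nonnegative probabilities rather than formal variables. The stronger statement, that $\Tqx$ and $\wordTqx$ are diagonalizable for generic $q$ in a field extension, is deferred and handled separately after \cref{cor:diagonalizability} via the inclusion maps of \cref{theorem.topcommutativediagram,theorem.bottomcommutativediagram}, so it does not need to be addressed in this proof.
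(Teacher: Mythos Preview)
Your proposal is correct and matches the paper's approach exactly: the paper simply notes, just before stating the corollary, that by \cref{prop:flagactions} the matrix $\hatTqx$ is the transpose of $\tTqx$ and declares the result immediate. Your write-up merely spells out the standard facts that transposition preserves the characteristic polynomial and diagonalizability, which is precisely the implicit reasoning.
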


In particular, this proves \cref{cor.eigenvalues R2T}.

%%%%%%%%%%%%%%%%%%%%%%%%%%%%%%%%%%%%%%%%%%%%%%%%%%%%%%%%%%%%%%%%%%%
\subsection{Characteristic polynomials for permutations}
\label{ss:eigenvalue perm}

We will now use \cref{prop:flageigenvalues} and \cref{cor:flageigenvalues} to compute the characteristic polynomial of 
$\Tqx$, thereby proving \cref{thm:evalues Sn} from \cref{section.results}. 

Recall from \cref{cor:diagonalizability} that the set $\Lambda_{\symm_n}$ of eigenvalues of $\Tqx$ are contained in the set of eigenvalues 
of $\hatTqx$, which by \cref{cor:flageigenvalues} is given by
\[ 
	\Lambda_{G/B} = \bigg\{ \lambda_S(q,\x) = \sum_{j=1}^k \frac{x_{i_j}}{q^{n-i_j-j+1}} \; \bigg| \; S = \{ i_1 > i_2 > \cdots > i_k \} \subseteq [n] \bigg\}. 
\]
Thus, it remains to compute the multiplicities $m_{\symm_n}(S)$ of $\lambda_S(q,\x)$ for $\Tqx$, which are not implied by the multiplicities for $\hatTqx$. In particular, $\Tqx$ is an $n! \times n!$ matrix, 
whereas $\hatTqx$ is an $[n]!_q \times [n]!_q$ matrix; hence these multiplicities will differ considerably. A priori, it is possible for some 
$\lambda_S(q,\x)$ with $m_{G/B}(S) > 0$ to occur with zero multiplicity in $\Tqx$.

To prove \cref{thm:evalues Sn2}, we use two facts. First, the specialization of $\Tqx$ when $q=1$ is well-understood. In particular,  Donnelly \cite{Donnelly.1991}, Kapoor--Reingold \cite{kapoor1991stochastic}, and Phatarfod \cite{Phatarfod.1991} independently showed that the eigenvalues of $\mathcal{T}(1,\x)$ are of the form $\lambda_S(1,\x) = x_{i_1} + \cdots + x_{i_k}$. They prove that for $S = \{ i_1 > i_2 > \cdots > i_k \}$, the multiplicity of $\lambda_{S}(1,\x)$ is
$d_{n-k}$, where $d_{n-k}$ is the derangement number counting the number of permutations of $\symm_{n-k}$ with no fixed points.

The second fact is the following folklore result, used e.g. in \cite{Lusztig03}\footnote{We thank Darij Grinberg for communicating this result to us.}.

\begin{lemma}\label{darij_lemma}
Suppose $M(q)$ is an $\ell \times \ell$ matrix whose entries are in $\C(q)$ with the following properties:
\begin{enumerate}
    \item For infinitely many values of $q$, we have that $M(q)$ satisfies 
    \[ \prod_{i=1}^{j} (M(q) -\xi_i(q) \, {\sf Id}_{\ell \times \ell}) = 0, \]
    where ${\sf Id}_{\ell \times \ell}$ is the identity matrix, and the
$\xi_i(q)$ are in $ \C(q)$ and pairwise distinct. 
\item When $q=1$, the elements $\xi_i(1)$ are well-defined and pairwise distinct, and the characteristic polynomial of $M(1)$ can be factored as 
\[ \chi(t;1) = \prod_{i=1}^{j} \big( t-\xi_i(1)\big)^{m_i}.\]
\end{enumerate}
Then the characteristic polynomial of $M(q)$ is 
\[ \chi(t;q) = \prod_{i=1}^{j} \big(t-\xi_i(q) \big)^{m_i}  \]
whenever the matrix $M(q)$ and corresponding rational functions $\xi_i(q)$ are well-defined.
\end{lemma}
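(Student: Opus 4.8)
The plan is to first upgrade hypothesis (1) from a statement about infinitely many numerical values of $q$ to a genuine identity of matrices over the field $\C(q)$, then to read off the shape of the characteristic polynomial, and finally to pin down its exponents by a unique-factorization argument at $q = 1$.

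First I would observe that every entry of the matrix $\prod_{i=1}^{j}\bigl(M(q) - \xi_i(q)\,{\sf Id}_{\ell\times\ell}\bigr)$ is a rational function of $q$ which, by hypothesis (1), vanishes for infinitely many $q$; since a nonzero rational function has only finitely many zeros, all these entries are identically zero, so
\[
\prod_{i=1}^{j}\bigl(M(q) - \xi_i(q)\,{\sf Id}_{\ell\times\ell}\bigr) = 0
\]
as an identity over $\C(q)$. Consequently the minimal polynomial $\mu(t)$ of $M(q)$, viewed as a matrix over the field $\C(q)$, divides $\prod_{i=1}^{j}\bigl(t - \xi_i(q)\bigr)$. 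Because the $\xi_i(q)$ are pairwise distinct in $\C(q)$, this product is a product of distinct monic linear factors, hence so is $\mu(t)$; in particular $\mu(t)$ is squarefree and splits over $\C(q)$. Since the characteristic polynomial $\chi(t;q)$ has the same irreducible factors over $\C(q)$ as $\mu(t)$, it follows that
\[
\chi(t;q) = \prod_{i=1}^{j}\bigl(t - \xi_i(q)\bigr)^{n_i}
\]
for a fixed tuple of integers $n_i \geqslant 0$ with $n_1 + \cdots + n_j = \ell$, namely the multiplicities in the factorization of $\chi(t;q)$ into irreducibles over the field $\C(q)$ (these $n_i$ are honest integers, not functions of $q$).

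It then remains to identify $n_i = m_i$. By hypothesis (2) the entries of $M(q)$ and the functions $\xi_i(q)$ are all regular at $q = 1$, so evaluation at $q=1$ is a ring homomorphism on the rational functions involved and we may specialize the displayed identity. Since $\det\bigl(t\,{\sf Id}_{\ell\times\ell} - M(q)\bigr)$ specializes to $\det\bigl(t\,{\sf Id}_{\ell\times\ell} - M(1)\bigr) = \chi(t;1)$, we obtain
\[
\prod_{i=1}^{j}\bigl(t - \xi_i(1)\bigr)^{n_i} \;=\; \chi(t;1) \;=\; \prod_{i=1}^{j}\bigl(t - \xi_i(1)\bigr)^{m_i},
\]
the last equality being hypothesis (2). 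As the $\xi_i(1)$ are pairwise distinct, unique factorization in $\C[t]$ forces $n_i = m_i$ for every $i$. Therefore $\chi(t;q) = \prod_{i=1}^{j}\bigl(t - \xi_i(q)\bigr)^{m_i}$ holds as an equality of the coefficients of $t$, each a rational function of $q$, and hence may be evaluated at any $q$ at which $M(q)$ and all the $\xi_i(q)$ are well-defined.

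I do not anticipate a genuine obstacle: the only points requiring a little care are standard ones — that a rational function with infinitely many zeros vanishes, that characteristic and minimal polynomials share irreducible factors, and that specialization at $q = 1$ commutes both with the determinant defining $\chi(t;q)$ and with the factorization $\prod_i(t-\xi_i(q))^{n_i}$. This is exactly the kind of ``algebraic deformation'' device used to transport the classical ($q=1$) Tsetlin eigenvalue multiplicities to general $q$.
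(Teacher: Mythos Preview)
The paper does not actually prove this lemma: it is stated as a folklore result, with a footnote thanking Darij Grinberg for communicating it, and is then used as a black box in the proofs of \cref{thm:evalues Sn2} and \cref{thm:evalues words}. Your argument is correct and supplies the missing proof. The three steps --- promoting hypothesis (1) to an identity over $\C(q)$ because a rational function with infinitely many zeros vanishes, deducing that $\chi(t;q)$ splits as $\prod_i (t-\xi_i(q))^{n_i}$ from the shared-irreducible-factors relationship between minimal and characteristic polynomials, and then reading off $n_i = m_i$ by specializing at $q=1$ and invoking unique factorization in $\C[t]$ --- are all sound, and the care you take about regularity at $q=1$ is exactly what is needed to justify the specialization.
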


With \cref{darij_lemma}, we are now ready to prove \cref{thm:evalues Sn2}.

\begin{proof}[Proof of \cref{thm:evalues Sn2}]
We will first show that for a fixed tuple of weights $\x$, the transition matrix $\Tqx$ satisfies the conditions of \cref{darij_lemma}. In particular, 
fix $\x$ to be generic such that (1) $x_i \in \mathbb{R}_{\geqslant 0}$ and $\sum_{i=1}^n x_i =1$, and 
(2) $\lambda_S(1,\x) = \lambda_T(1,\x)$ if and only if $S = T$.

With this choice of $\x$, it is clear that $\Tqx$ has entries in $\C(q)$. Second, by \cref{cor:diagonalizability,cor:flageigenvalues}, 
whenever $q=p^m$ for some prime $p$, we have that $\Tqx$ is diagonalizable and $\Lambda_{\symm_n} \subseteq \Lambda_{G/B}$. Hence, for infinitely many values of $q$, we have that $\Tqx$ satisfies 
\begin{equation}\label{eq:minpolyTqx}
    \prod_{S \subseteq [n]} \big( \Tqx - \lambda_S(q,\x) {\sf Id}_{n! \times n!}  \big) = 0.
\end{equation}
By the genericity of $\x$, the $\lambda_S(q,\x)$ are distinct for distinct subsets $S \subseteq [n]$ when $q$ is generic. Hence $\Tqx$ 
satisfies the first hypothesis of \cref{darij_lemma}.

As noted above, \cite{Donnelly.1991,kapoor1991stochastic, Phatarfod.1991} shows the characteristic polynomial of $\mathcal{T}_{\symm_n}(1,\x)$ is given by 
\[ 
	\chi(t;1,\x) = \prod_{S \subseteq [n]} \big( t -  \lambda_S(1,\x) \big)^{d_{n-|S|}}, 
\]
where $\lambda_S(1,\x) = \sum_{j=1}^{|S|} x_{i_{j}}.$ Again, by genericity of $\x$ we have $\lambda_S(1,\x) \neq \lambda_T(1,\x)$ if $T \neq S$. Thus 
$\Tqx$ satisfies the second hypothesis of \cref{darij_lemma}. It follows that when $q\neq 0$ and generic $\x$, we obtain that the 
characteristic polynomial of $\Tqx$ is 
\begin{equation}\label{eq:charpoly}
	\chi(t;q,\x) = \prod_{S \subseteq [n]} \big( t -  \lambda_S(q,\x)\big)^{d_{n-|S|}}. 
\end{equation}
We now claim that \eqref{eq:charpoly} holds for any choice of $\x$. Our genericity arguments show that \eqref{eq:charpoly} holds on a  Zariski dense subset; but \eqref{eq:charpoly} is a polynomial identity,  and hence must hold for all $\x$. 

The diagonalizability of $\Tqx$ for $q \neq 0$ follows by a similar argument. By the same polynomial identity arguments, we have that $\Tqx$ 
satisfies~\eqref{eq:minpolyTqx} whenever $\lambda_S(q,\x)$ and $\Tqx$ are well-defined, i.e. when $q \neq 0$. Then $\Tqx$ will be diagonalizable if 
the $\lambda_S(q,\x)$ are pairwise distinct, i.e. if $\lambda_S(q,\x) = \lambda_T(q,\x)$ means that $S = T$. 
\end{proof}

%%%%%%%%%%%%%%%%%%%%%%%%%%%%%%%%%%%%%%%%%%%%%%%%%%%%%%%%%%%
\subsection{Characteristic polynomial for words}
\label{ss:eigenvalue word}

We now turn to computing the characteristic polynomial for $\wordTqx$, thereby proving \cref{thm:evalues words}.
Our techniques are similar to those used in \cref{ss:eigenvalue perm}, but we need to take some care translating from $\x$ to $\xbar$ coordinates.

Recall that the upper sets of $P_\m$ are parametrized by weak compositions $\av = (a_1, \dots, a_\ell)$, where $a_i \leqslant m_i$, 
and the integers $n_i$ and sets $M_i$ are defined in \eqref{equation.Midef}.

\begin{proof}[Proof of \cref{thm:evalues words}]
We first prove that the eigenvalues of $\wordTqx$ are of the form $\lambda_{\av}(q,\xbar)$ in \eqref{eq:simplifiedwordevalue}. Recall from 
\cref{cor:diagonalizability} that the set $\Lambda_{W_\m}$ of eigenvalues of $\wordTqx$ is contained in the set $\Lambda_{\symm_n}$
of eigenvalues of $\Tqx$. For a set $S = \{ i_1 > i_2 > \cdots > i_k \}$, recall from the proof of \cref{thm:evalues Sn2} that one can write 
$\lambda_{S}(q,\x) \in \Lambda_{\symm_n}$ as 
\begin{equation} \label{eq:lambdaSiny}
	\lambda_S(q,\x) = \sum_{j=1}^k \frac{x_{i_j}}{q^{n-i_j-j+1}} =\sum_{j=1}^k q^{j-1} y_{i_j}.  
\end{equation}
By \cref{lemma:howtomapybar}, $\proj_{W_\m}(y_{n_{j-1}+i}) = \yo_j$ for $1 \leqslant i \leqslant m_j$. Writing the 
eigenvalue $\lambda_S(q,\x)$ in terms of the $\ybar$ (and hence the $\xbar$, which is our ultimate goal) depends on the set $S \cap M_j$ for each 
$1 \leqslant j \leqslant \ell$. (Note that this is equivalent to computing $\proj_{W_\m}(\lambda_S(q,\x))$.) Write 
\[ 
a_j:= |S \cap M_j|, \qquad 
\text{and}
\qquad b_j:= a_{j+1} + a_{j+2} + \cdots +a_{\ell}. 
\]

Then using the right-hand-side of \eqref{eq:lambdaSiny}, we pair elements appearing in each $M_{1}, \ldots, M_{\ell}$. Note that elements in the 
same $M_j$ appear consecutively in $\sum_{j=1}^k q^{j-1} y_{i_j}$, but differ by a power of $q$. 

Suppose $i_j= \max(S \cap M_j)$; thus for 
a given set of elements $i_{r} > \cdots > i_s \in M_j$, under $\proj_{W_\m}$, one has
\[ 
	q^{r-1} y_{i_r} + \cdots + q^{s-1}y_{i_s}  \mapsto q^{r-1} [a_j]_q \yo_j.
\]
Moreover, $r-1 = b_j$, since $a_{j+1} + \cdots + a_\ell $ terms of $S$ necessarily precede $i_r$. It follows that 
\begin{equation}
    \proj_{W_\m}\big(\lambda_S(q,\x)\big) = \sum_{j=1}^{\ell} q^{b_j}[a_j]_q \yo_j = \sum_{j=1}^{\ell} \frac{q^{b_j}[a_j]_q}{q^{n-n_j}[m_j]_q} \xo_j,
\end{equation}
where the second equality follows from the fact that $\yo_j = \frac{\xo_j}{q^{n-n_j}[m_j]_q}$. This proves that the eigenvalues of $\wordTqx$ are of the form 
\[ \lambda_{\av}(q,\xbar) = \sum_{j=1}^{\ell} \frac{q^{(a_{j+1} + \cdots + a_\ell)}[a_j]_q}{q^{n-n_j}[m_j]_q} \xo_j\]
as desired.

We next prove that the multiplicity of $\lambda_\av$ is $d_{P_\m \setminus S}$ as described in \cref{section.qTsetlin on words}. We again 
use \cref{darij_lemma}: by an analogous argument to the proof of \cref{thm:evalues Sn2}, when $\xbar$ is generic, the conditions of 
\cref{darij_lemma} (1) are satisfied. By \cite[Theorem 5.3]{AyyerKleeSchilling.2014}, the characteristic polynomial of $\mathcal{T}_{W_\m}(1,\x)$ is 
\[ 
	\prod_{S \subseteq [n]} (t - \lambda_S(1,\x))^{d_{P_\m \setminus S}}. 
\]
By our above argument, $\lambda_S(1,\x) = \lambda_{\av}(1,\xbar)$ with $\av= (|S \cap M_1|, |S \cap M_2|, \ldots, |S \cap M_\ell|).$ 
Hence (using \cref{darij_lemma} and an analogous argument about the genericity of the $\xbar$ as in the proof of \cref{thm:evalues Sn2}) 
we may conclude that
\[ 
	\prod_{S \subseteq [n]} (t - \lambda_{\av}(q,\xbar))^{d_{P_\m \setminus S}} 
\]
is the characteristic polynomial of $\wordTqx$ for all $q$ and all choices of $\xbar$ for which $\wordTqx$ is defined.
\end{proof}

\begin{example}
Suppose $\m = (2,3,1)$ and consider $S = \{ 1,2, 3,5 \}$. Then $a_1 = 2$, $a_2 = 2$ and $a_3 = 0$, and $n_1 = 2$, $n_2 = 5$ and $n_3 = 6= n$. Thus
\begin{align*}
    \lambda_{\{ 1,2,3, 5 \}}(q,\x) &= \frac{1}{q} x_5 + \frac{1}{q^2} x_3 + \frac{1}{q^2} x_2 + \frac{1}{q^2} x_1 
    \\ &= y_5 + q \,y_3 + q^2 \,y_2 + q^3 \, y_1  \\
    &= \yo_2 + q \, \yo_2 + q^2 \, \yo_1 +  q^3 \, \yo_1 = [2]_q \yo_2 + q^2[2]_q \yo_1 \\
    &= \frac{[2]_q}{q^{6-5}[3]_q} \xo_2 + \frac{q^2[2]_q}{q^{6-2}[2]_q} \xo_1 = \lambda_{(2,2,0)}(q,\xbar).
\end{align*}
Note that $a_3 = 0$ explains the $q^0$ exponent in the numerator of the $\xo_2$ term, and $a_2 + a_3 = 2$ explains the exponent in the 
numerator of the $\xo_1$ term.
\end{example}

%%%%%%%%%%%%%%%%%%%%%%%%%%%%%%%%%%%%%%%%%%%%%%%%%%%%%%%%%%%
\section{The stationary distribution}
\label{section.stationary distribution}
%%%%%%%%%%%%%%%%%%%%%%%%%%%%%%%%%%%%%%%%%%%%%%%%%%%%%%%%%%%

This section is devoted to the proofs of 
\cref{th:eigenvectorH,th:steady state words,theorem.sd flags},
which state the stationary distributions for the various Markov chains.
The proof of \cref{theorem.sd flags} for the stationary distribution for flags uses methods for left regular bands (or more generally 
$\mathscr{R}$-trivial monoids) which are reviewed in \cref{ss:semigroup sd}. The proof itself is given in \cref{ss:flags sd}.
The proofs of \cref{th:eigenvectorH,th:steady state words} of the stationary distributions for the $q$-Tsetlin library on 
permutations and words are given in \cref{ss:ss permutations,ss:ss words}, respectively, using lumping
established in \cref{cor:lumping}.

%%%%%%%%%%%%%%%%%%%%%%%%%%%%%%%%%%%%%%%%%%%%%%%%%%%%%%%%%%%
\subsection{Computing stationary distributions using semigroup theory}
\label{ss:semigroup sd}

As shown in \cref{ss:Brown LRB}, our $q$-Tsetlin library on flags and cosets introduced in \cref{section.qTsetlin on flags}
is equivalent to a Markov chain arising from a left regular band~\cite{Brown,BraunerComminsReiner}. Hence we are able to utilize the methods developed 
in~\cite{ASST.2015,RhodesSchilling.2019} to compute the stationary distribution, which we now briefly review.

The methods developed in~\cite{RhodesSchilling.2019} work in the general setting for Markov chains described by $(S,X)$, where
$S$ is a finite semigroup $S$ with a generating set $X=\{X_1,\ldots,X_\ell\}$ and each generator $X_i$ is associated with a probability $x_i$. 
One considers the right Cayley graph $\mathsf{RCay}(S,X)$ with root $\mathbbm{1}$ and a directed edge labeled $X_i$ from $s\in S$
to $s'\in S$ if $s'=s X_i$. In addition, there is an edge labeled $X_i$ from $\mathbbm{1}$ to $X_i$ for $1\leqslant i \leqslant \ell$.
We describe the methods in the case when $S$ is $\mathscr{R}$-trivial, meaning that each edge $s \stackrel{X_i}{\longrightarrow} s'$ 
in $\mathsf{RCay}(S,X)$ either
\begin{enumerate}
\item \textit{stabilizes} $s$, meaning $s'=s$; or
\item is a \textit{transition edge}, meaning there is no directed path from $s'$ back to $s$ in $\mathsf{RCay}(S,X)$.
\end{enumerate}

\begin{remark}
Left regular bands are $\mathscr{R}$-trivial semigroups.
\end{remark}
The \textit{minimal ideal} $\mathcal{I}$ of the right Cayley graph consists of all vertices that have no outgoing transition edges. 
For $\mathscr{R}$-trivial semigroups, the minimal ideal $\mathcal{I}$ hence consists of all vertices which are stabilized by all
generators $X_i$ for $1\leqslant i \leqslant \ell$.

The states of the Markov chain $\mathcal{M}(S,X)$ defined by $(S,X)$ are the elements of the minimal ideal $\mathcal{I}$ and transitions
between states is given by left multiplication by the generators $X_i \in X$. More precisely, in the Markov chain there is a transition
from $s\in \mathcal{I}$ to $s' \in \mathcal{I}$ with probability $x_i$ if $s' = X_i s$.

The stationary distribution $\Psi = (\Psi_s)_{s\in \mathcal{I}}$ has components $\Psi_s$ indexed by the states of the Markov chain $s\in \mathcal{I}$.
As shown in~\cite{ASST.2015,RhodesSchilling.2019}, the component $\Psi_s$ can be computed as follows. Consider the set $P_s$ of all paths in 
$\mathsf{RCay}(S,X)$ from $\mathbbm{1}$ to $s$ consisting only of transition edges. A path in $P_s$ is given by a tuple 
$(X_{i_1},\ldots,X_{i_k})$, where each $X_{i_j}$ corresponds to the label of a transition edge and
\[
	s = X_{i_1} \cdots X_{i_k}.
\]
Denote by $\mathsf{Stab}_j$ the set of all indices $i$ such that $X_i$ stabilizes $X_{i_1} \cdots X_{i_j}$, that is,
\[
	X_{i_1} \cdots X_{i_j} \cdot X_i = X_{i_1} \cdots X_{i_j}.
\]
The probability associated to the path $(X_{i_1},\ldots,X_{i_k})$ is
\begin{equation}
\label{equation.prob p}
	\frac{x_{i_1} \cdots x_{i_k}}{\prod_{j=1}^{k-1} (1-\sum_{i \in \mathsf{Stab}_j} x_i)}.
\end{equation}
Then $\Psi_s$ is the sum over all paths in $P_s$ over their associated probability as in~\eqref{equation.prob p}.

%%%%%%%%%%%%%%%%%%%%%%%%%%%%%%%%%%%%%%%%%%%%%%%%%%%%%%%%%%%
\subsection{Stationary distribution for flags}
\label{ss:flags sd}
In this section, we prove \cref{theorem.sd flags} using the methods described in \cref{ss:semigroup sd}.

The semigroup for the $q$-Tsetlin library on flags is the $q$-free left regular band $\mathcal{F}_n(q)$ introduced in \cref{ss:Brown LRB}.
The generators of the left regular band are the $[n]_q$ lines in $\mathbb{F}_q^n$. Recall that the line $L = \langle e_i + \sum_{k=i+1}^n c_k e_k\rangle$ with
$c_k \in \mathbb{F}_q$ is assigned the probability 
\begin{equation}
y_i=\frac{x_i}{q^{n-i}},
\end{equation} 
where $0 < x_i \leqslant 1$ and $x_1+\cdots+x_n=1$. Note
that there are $q^{n-i}$ lines $L$ with leading term $e_i$ since there are $q$ choices for each coefficient $c_k$ for $i<k\leqslant n$.

\begin{example}
Let $q=2$ and $n=3$. Part of the right Cayley graph of $\mathcal{F}_3(2)$ is depicted in \cref{figure.partial RCay}.
Consider the flag
\[
	F_{132}= (\emptyset \subseteq \langle e_{1} \rangle \subseteq \langle e_{1}, e_{3} \rangle \subseteq \langle e_1, e_2, e_3 \rangle).
\]
The probability to reach $F_{132}$ is
\[
	\Psi(q,\x)_{F_{132}} = \frac{y_1(y_3+y_1)(2y_1+2y_2)}{(1-y_1)(1-y_3-2y_1)} = \frac{y_1(y_3+y_1)}{1-y_1},
\]
where the numerator comes from the transition edges and the denominator comes from the stabilizers. Note that $x_1=4y_1$, $x_2=2y_2$, and
$x_3=y_3$. Since $x_1+x_2+x_3=1$, the terms $2y_1+2y_2$ and $1-y_3-2y_1$ are equal and cancel. This expression agrees with that
of $\Psi(q,\x)_{F_{132}}$ in \cref{example.q=2 n=3 flag}.
\end{example}

\begin{figure}
\begin{center}
    \begin{tikzpicture}[auto,scale=0.7]
\node (I) at (0, 0) {$\emptyset$};
\node (A1) at (-4,-1.5) {$\langle e_1\rangle$};
\node (A2) at (-0.5,-1.5) {$\langle e_1+e_2\rangle$};
\node (A3) at (2,-1.5) {$\langle e_1+e_3\rangle$};
\node (A4) at (5,-1) {};
\node(B1) at (-8.5,-4) {$\langle e_1,e_2\rangle$};
\node(B2) at (-5,-4) {$\langle e_1,e_2+e_3\rangle$};
\node(B3) at (0.5,-4) {$\langle e_1, e_3\rangle$};
\node(C1) at (-9,-6) {$\langle e_1,e_2,e_3\rangle$};
\node(C3) at (0.5,-6) {$\langle e_1,e_3,e_2\rangle$};

\draw[edge,blue,thick] (I) -- (A1) node[midway, left] {\scriptsize $\langle 1\rangle$\;\;};
\draw[edge,blue,thick] (I) -- (A2);
\draw[edge,blue,thick] (I) -- (A3);
\draw[edge,blue,thick] (I) -- (A4) node[midway, right] {\;\;\;\;\scriptsize $[n]_q=7$ lines};

\draw[edge,blue,thick] (A1) -- (B1) node[midway, left] {\scriptsize $\langle 2\rangle, \langle 1+2\rangle$\;\;};
\draw[edge,blue,thick] (A1) -- (B2) node[midway, right] {\scriptsize $\begin{array}{l} \langle 2+3\rangle, \\ \langle 1+2+3\rangle \end{array}$};
\draw[edge,blue,thick] (A1) -- (B3) node[midway, right] {\;\;\scriptsize $\langle 3\rangle, \langle 1+3\rangle$};
\draw[edge,blue,thick] (B1) -- (C1) node[midway, right] {\scriptsize $q^2$ lines};
\draw[edge,blue,thick] (B2) -- (C1) node[midway, right] {\; \scriptsize $q^2$ lines};
\draw[edge,blue,thick] (B3) -- (C3) node[midway, right] {\;\;\scriptsize $\begin{array}{l}  \langle 2\rangle, \langle 2+3\rangle, \\ \langle 1+2\rangle, \langle 1+2+3\rangle \end{array}$};

\path
(A1) edge [loop above, red] node[left] {\scriptsize  $\langle 1\rangle$} (A1)
(B1) edge [loop above, red] node[left] {\scriptsize $\begin{array}{l} \langle 2\rangle, \langle 1\rangle, \\ \langle 1+2\rangle \end{array}$} (B1)
(B2) edge [->,loop, out=300,in=270,looseness=6, red] node[right] {\scriptsize $\langle 1\rangle, \langle 2\rangle, \langle 2+3\rangle$} (B2)
(B3) edge [loop above, left,red] node[right] {\scriptsize $\langle 3\rangle, \langle 1\rangle, \langle 1+3\rangle$} (B3);
\end{tikzpicture}
\end{center}
\caption{Partial right Cayley graph for the case $q=2$ and $n=3$. On the colored arrows we abbreviate $\langle e_i\rangle$ by $\langle i\rangle$.
\label{figure.partial RCay}}
\end{figure}

\begin{proof}[Proof of \cref{theorem.sd flags}]
We will prove the theorem under the assumption that $x_1+\cdots + x_n=1$. In particular, we first show that there exists a stationary distribution satisfying
\begin{equation}
\label{equation.for proof}
\Psi(q,\x)_{F_\pi} = \frac{ \prod_{k=1}^n f(\pi_1,\ldots, \pi_k)}
{\prod_{k=1}^{n-1}\left( 1 - \sum_{s\in \{\pi_1,\ldots,\pi_k\}} \frac{x_s}{q^{n-s-b_{k+1}(s)}}\right)}
\end{equation}
with 
\begin{equation}
\label{equation.f pi}
	f(\pi_1,\ldots,\pi_k) = \sum_{\substack{s\in \{\pi_1,\ldots,\pi_k\}\\ s\leqslant \pi_k}} \frac{x_s}{q^{n-s-b_k(s)}} (q-1)^{\chi(s<\pi_k)},
\end{equation}
and $b_k(s) = \# \{ t\in \{\pi_1,\ldots,\pi_{k-1}\} \mid t>s \}$, and $\chi(s<\pi_k)=1$ if $s<\pi_k$ and 0 otherwise. To obtain the formulas stated in \cref{theorem.sd flags}, one may replace $x_i$ by $\frac{x_i}{x_1+\cdots+x_n}$ and multiply~\eqref{equation.for proof}
by $\frac{1}{x_1+\cdots+x_n}$. Note that the factor $k=1$ in the denominator in \cref{theorem.sd flags} is equal to 1.

By \cref{cor:lumping}, we have $\Psi(q,\x)_F = \Psi(q,\x)_{F'}$ if $\proj_{\symm_n}(F) = \proj_{\symm_n}(F')$. Hence, for a given 
permutation $\pi = \pi_1 \,\pi_2\,\ldots\,\pi_n$, it suffices to compute the stationary distribution for the flag 
\[ F_{\pi}:= (\emptyset \subseteq \langle e_{\pi_1} \rangle \subseteq \langle e_{\pi_1}, e_{\pi_2} \rangle \subseteq \cdots \subseteq V_n). \]

To reach $F_{\pi}$ from the root of the right Cayley graph, in the first step we need to pick line $\langle e_{\pi_1} \rangle$ with probability
$\frac{x_{\pi_1}}{q^{n-\pi_1}}$. This is the term $f(\pi_1)$ in~\eqref{equation.f pi}, which is the factor $k=1$ in the numerator 
in~\eqref{equation.for proof}. The stabilizer of the line $\langle e_{\pi_1} \rangle$ is only the line itself. Since its associated
probability is $\frac{x_{\pi_1}}{q^{n-\pi_1}}$, it contributes the terms $\frac{1}{1-\frac{x_{\pi_1}}{q^{n-\pi_1}}}$ to the stationary distribution,
which is the term $k=1$ in the denominator of~\eqref{equation.for proof}.

Next, we need to look at all generators (i.e. lines), which take us from $\langle e_{\pi_1} \rangle$ to $\langle e_{\pi_1}, e_{\pi_2} \rangle$. We need to
distinguish the cases when $\pi_1<\pi_2$ and $\pi_1>\pi_2$. First consider the case $\pi_1<\pi_2$. The $q-1$ lines 
$\langle e_{\pi_1} + c e_{\pi_2} \rangle$ with $c\neq 0$ and the line $\langle e_{\pi_2} \rangle$ go from $\langle e_{\pi_1} \rangle$ to 
$\langle e_{\pi_1}, e_{\pi_2} \rangle$. This happens with probability 
\[ (q-1) \frac{x_{\pi_1}}{q^{n-\pi_1}} + \frac{x_{\pi_2}}{q^{n-\pi_2}} = f(\pi_1, \pi_2), \]
which is the factor $k=2$ in the numerator of~\eqref{equation.for proof}. Next consider $\pi_1>\pi_2$. The $q$ lines $e_{\pi_2} + c e_{\pi_1}$ for 
any $c\in \mathbb{F}_q$ take us from $\langle e_{\pi_1} \rangle$ to $\langle e_{\pi_1}, e_{\pi_2} \rangle$. This happens with the probability 
$q \frac{x_{\pi_2}}{q^{n-\pi_2}} = f(\pi_1,\pi_2)$, which is the factor $k=2$ in the numerator of~\eqref{equation.for proof}.

If $\pi_1<\pi_2$, the lines $\langle e_{\pi_1} + c e_{\pi_2} \rangle$ for $c\in \mathbb{F}_q$ and $\langle e_{\pi_2} \rangle$ stabilize the subspace
$\langle e_{\pi_1}, e_{\pi_2} \rangle$ with probability $p=q \frac{x_{\pi_1}}{q^{n-\pi_1}} + \frac{x_{\pi_2}}{q^{n-\pi_2}}$. This contributes 
$\frac{1}{1-p}$ to the stationary distribution, which is the term $k=2$ in the denominator of~\eqref{equation.for proof}. 
If $\pi_1>\pi_2$, the lines $\langle e_{\pi_2} + c e_{\pi_1} \rangle$ for $c\in \mathbb{F}_q$ and $\langle e_{\pi_1} \rangle$ stabilize the subspace
$\langle e_{\pi_1}, e_{\pi_2} \rangle$ with probability $p=\frac{x_{\pi_1}}{q^{n-\pi_1}} + q\frac{x_{\pi_2}}{q^{n-\pi_2}}$. This contributes
$\frac{1}{1-p}$ to the stationary distribution, which is the term $k=2$ in the denominator of~\eqref{equation.for proof}. 

In general, to get from the subspace $\langle e_{\pi_1},\ldots,e_{\pi_{k-1}} \rangle$ to the subspace $\langle e_{\pi_1},\ldots, e_{\pi_{k}} \rangle$,
the lines 
\[ \langle e_{\pi_k} + \sum_{t\in \{\pi_1,\ldots, \pi_{k-1}\}, t>\pi_k} c_t e_t \rangle\] with $c_t \in \mathbb{F}_q$ and the lines 
\[ \langle e_s +c e_{\pi_k}+ \sum_{t\in \{\pi_1,\ldots,\pi_{k-1}\}, t>s} c_s e_t \rangle\] with $s\in \{\pi_1,\ldots, \allowbreak \pi_{k-1}\}$ and $s<\pi_k$, $c\neq 0$, 
$c_t \in \mathbb{F}_q$ can be added. The first set of lines comes with probability $q^{b_k(\pi_k)} \frac{x_{\pi_k}}{q^{n-\pi_k}}$,
whereas the second set of lines comes with probabilities 
\[ \sum_{s\in \{\pi_1,\ldots,\pi_k\}, s< \pi_k}\frac{x_s}{q^{n-s-b_k(s)}} (q-1) \]
for a total of $f(\pi_1,\ldots, \pi_k)$, which is the $k$-th factor in the numerator of $\Psi(q,\x)_{F_\pi}$.

The stabilizer of the subspace $\langle e_{\pi_1},\ldots,e_{\pi_k} \rangle$ are the lines
$\langle e_s + \sum_{t\in \{\pi_1,\ldots,\pi_k\}, t>s} c_t e_t \rangle$ for $c_t \in \mathbb{F}_q$ and $s\in \{\pi_1,\ldots,\pi_k\}$.
The probability for picking these lines is 
\[ p=\sum_{s\in \{\pi_1,\ldots,\pi_k\}} \frac{x_s}{q^{n-s-b_{k+1}(s)}}.\] The contribution is a factor
$\frac{1}{1-p}$ to the stationary distribution which is the $k$-th factor in the denominator in~\eqref{equation.for proof}.
This completes the proof of the existence of the stationary distribution.
It is clear from this calculation that each $\Psi(q,\x)_F$ is a probability, and hence the sum is $1$.

We will now show that the $q$-Tsetlin library on cosets is irreducible.
Consider the long word in $\symm_n$ given by $w_0 = n\; n-1\; \dots\;1$. It is easy to see that there is a unique double coset, 
$[w_0] = B^- w_0 B$, that corresponds to it. 
We have shown in \cref{theorem.topcommutativediagram} that $\proj_{\symm_n}$ projects the $q$-Tsetlin library on cosets to that on permutations.
Since the latter is irreducible, $w_0$ must occur starting from any configuration
in the $q$-Tsetlin library on permutations. Thus, $[w_0]$ must occur starting from any configuration in the $q$-Tsetlin library on cosets.
We have shown above that every coset gets assigned a positive probability in some stationary distribution. Hence, every coset is recurrent, proving irreducibility 
and uniqueness.
\end{proof}

\begin{remark}
\label{rem:last factor}
Following the above proof, note that for the subspace $\langle e_{\pi_1},\ldots,e_{\pi_{n-1}} \rangle$ any line either 
stabilizes the subspace with some probability $p$ or falls into the ideal (that is, reaching $F_{\pi}$) with probability $1-p$. 
Since the stabilizer contributes the factor $\frac{1}{1-p}$, the two cancel out. This shows that the last factor (that is, $k=n$)
for $\Psi(q,\x)_{F_\pi}$ in \cref{theorem.sd flags} is actually $1$. 
\end{remark}

The next corollary gives a strengthening of \cref{cor:lumping} for $\m$-compatible weights, which will be used in \cref{lemma.x compatible} and
the proof of the stationary distribution for words.

\begin{cor}
\label{cor:y equal}
Let $\pi \in \symm_n$ and $\pi'= s_i \cdot \pi$ be obtained from $\pi$ by interchanging $i$ and $i+1$ for some $1\leqslant i<n$. If $y_i=y_{i+1}$, we have
\[
	\Psi(q,\x)_{F_\pi} = \Psi(q,\x)_{F_{\pi'}}.
\]
\end{cor}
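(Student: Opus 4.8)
The plan is to deduce \cref{cor:y equal} from a symmetry of the flag chain, namely an extension of \cref{lemma:hinB-commutes} from left multiplication by $\bB$ to left multiplication by the permutation matrix $s_i\in\GL_n(\F_q)$, valid precisely when $y_i=y_{i+1}$. Let $\phi\colon\C[G/B]\to\C[G/B]$ be the linear map $gB\mapsto(s_ig)B$; on flags it is the coordinate swap $e_i\leftrightarrow e_{i+1}$, so $\phi(F_\pi)=(\emptyset\subseteq\langle e_{s_i(\pi_1)}\rangle\subseteq\cdots)=F_{\pi'}$. If I can show that $\phi$ commutes with $\hatTqx$ under the hypothesis $y_i=y_{i+1}$, then $\phi(\Psi(q,\x))\cdot\hatTqx=\phi(\Psi(q,\x)\cdot\hatTqx)=\phi(\Psi(q,\x))$, and uniqueness of the stationary distribution (established in the proof of \cref{theorem.sd flags}) forces $\phi(\Psi(q,\x))=\Psi(q,\x)$; reading off the coefficient of $F_{\pi'}=\phi(F_\pi)$ then gives $\Psi(q,\x)_{F_{\pi'}}=\Psi(q,\x)_{F_\pi}$.

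To prove that $\phi$ commutes with $\hatTqx=\hatTq\,\mathcal{X}_{G/B}(q,\x)$, I would follow the two-step structure of the proof of \cref{lemma:hinB-commutes}. The first observation there — that $\hatTq$ is an element of $\HH_n(q)$, whose action on $\C[G/B]$ centralizes the left $\GL_n(\F_q)$-action, hence commutes with left multiplication by any group element, in particular $s_i$ — carries over unchanged and needs no hypothesis on the weights. The place where $y_i=y_{i+1}$ is needed is the analogue of the second observation: that $\mathcal{X}_{G/B}(q,\x)$ is $\phi$-equivariant, i.e. $\phi\big(gB\cdot\mathcal{X}_{G/B}(q,\x)\big)=\phi(gB)\cdot\mathcal{X}_{G/B}(q,\x)$. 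By \eqref{equation.XGB} the scalar attached to $gB$ is $y_j=x_j/q^{n-j}$, where $j$ is the leading index of the line $L$ spanned by the first column of $g$, so it suffices to check that the $y$-value of the leading index of $s_iL$ equals $y_j$. This is a short case analysis on $j$: if $j\notin\{i,i+1\}$ the leading index is unchanged; if $j=i+1$, or $j=i$ with the $e_{i+1}$-coefficient of $L$ equal to $0$, the leading index of $s_iL$ is the other member of $\{i,i+1\}$, which is harmless exactly because $y_i=y_{i+1}$; and if $j=i$ with nonzero $e_{i+1}$-coefficient, then after renormalizing the standard line representative the leading index of $s_iL$ is again $i$, so $y_j$ is literally preserved.

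With $\phi$-equivariance of $\mathcal{X}_{G/B}$ in hand, the identity $\phi(gB\cdot\hatTqx)=\phi(gB)\cdot\hatTqx$ follows verbatim as in \cref{lemma:hinB-commutes}: write $gB\cdot\hatTq=\sum_{g'B}c_{g'B}\,g'B$, use the centralizer property to conclude $c_{g'B}=c_{(s_ig')B}$, then apply $\phi$ to $gB\cdot\hatTqx=\sum_{g'B}c_{g'B}\,g'B\cdot\mathcal{X}_{G/B}(q,\x)$ and reindex the sum. This completes the deduction sketched in the first paragraph.

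The only mildly delicate point I anticipate is the bookkeeping in the case analysis for the leading index of $s_iL$ — in particular recognizing that the lines $\langle e_i+c\,e_{i+1}+\cdots\rangle$ with $c\neq 0$ keep leading index $i$, so their weight $y_i$ is genuinely unchanged, while it is the lines of leading index exactly $i+1$ together with those of leading index $i$ and vanishing $e_{i+1}$-coefficient whose weights get exchanged between $y_i$ and $y_{i+1}$. A purely computational alternative would be to compare the product formula of \cref{theorem.sd flags} for $F_\pi$ and $F_{\pi'}$ factor by factor, tracking how the prefix sets $\{\pi_1,\dots,\pi_k\}$ and the exponents $b_k(s)$ transform when the values $i$ and $i+1$ are interchanged; but this is messier, since those prefix sets do not match up termwise, whereas the symmetry argument bypasses the issue entirely.
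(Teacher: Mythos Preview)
Your argument is correct, and it is genuinely different from the paper's own proof. The paper takes exactly the route you dismiss in your final paragraph: it rewrites the product formula of \cref{theorem.sd flags} in the $y$-variables and compares $\Psi(q,\x)_{F_\pi}$ and $\Psi(q,\x)_{F_{\pi'}}$ factor by factor, splitting into the ranges $k<\ell$, $\ell\leqslant k<\ell'$, $k=\ell'$, and $k>\ell'$ (where $\pi_\ell=i$, $\pi_{\ell'}=i+1$). The delicate case is $k=\ell'$, where the numerator requires an explicit cancellation of the $y_i$ and $y_{i+1}$ terms.

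Your symmetry argument is cleaner and more conceptual: it upgrades \cref{lemma:hinB-commutes} from $\bB$ to the larger subgroup $\langle \bB,s_i\rangle$ under the hypothesis $y_i=y_{i+1}$, and then lets uniqueness of the stationary distribution do the work. It also yields more, namely $\Psi(q,\x)_F=\Psi(q,\x)_{\phi(F)}$ for \emph{every} flag $F$, not just the coordinate flags $F_\pi$. The trade-off is that you invoke uniqueness (irreducibility of the flag chain), which in the paper's logical order is only established at the very end of the proof of \cref{theorem.sd flags}; that proof does not use \cref{cor:y equal}, so there is no circularity, but the dependency is worth flagging. The paper's computational proof, by contrast, is self-contained once the explicit formula is in hand.
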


\begin{proof}
Rewriting $\Psi(q,\x)_{F_\pi}$ of \cref{theorem.sd flags} in terms of $y_i$ instead of $x_i$, we obtain
\[
	\Psi(q,\x)_{F_\pi} = \prod_{k=1}^n \frac{f(\pi_1,\ldots, \pi_k)}{1-\sum_{s\in \{\pi_1,\ldots,\pi_{k-1}\}} y_s q^{b_k(s)}},
\]
where
\[
	f(\pi_1,\ldots ,\pi_k) = \ \sum_{\substack{s\in \{\pi_1,\ldots,\pi_k\}\\ s\leqslant \pi_k}} y_s q^{b_k(s)} (q-1)^{\chi(s<\pi_k)}.
\]

Now suppose that $\pi_\ell=i$ and $\pi_{\ell'}=i+1$. Without loss of generality we may assume that $\ell<\ell'$ since otherwise we interchange
$\pi$ and $\pi'$. 
Note that all factors $1 \leqslant k<\ell$ in $\Psi(q,\x)_{F_\pi}$ and $\Psi(q,\x)_{F_{\pi'}}$ are the same. 

Given that $y_i=y_{i+1}$, the factors $\ell\leqslant k<\ell'$ are also the same in $\Psi(q,\x)_{F_\pi}$ and $\Psi(q,\x)_{F_{\pi'}}$ since the inequalities 
$s\leqslant \pi_k$ (resp. $s<\pi_k)$ in 
$f(\pi_1,\ldots, \pi_k)$ (resp. the exponent of $q-1$), and $t>s$ in $b_k(s)$ are satisfied by the same elements since only one of $i$ and $i+1$ appear
in the first $k$ entries of $\pi$ (resp. $\pi'$). 

For the factor $k=\ell'$, the denominators in $\Psi(q,\x)_{F_\pi}$ and $\Psi(q,\x)_{F_{\pi'}}$ are the same since again in the set $\{\pi_1,\ldots,\pi_{\ell'-1}\}$
(resp. $\{\pi'_1,\ldots,\pi'_{\ell'-1}\}$) only $i$ (resp. $i+1$), but not $i+1$ (resp. $i$) is present, and $y_i=y_{i+1}$. For the numerator, we are summing
over the set $\{\pi_1,\ldots,\pi_{\ell'}\}=\{\pi_1',\ldots,\pi'_{\ell'}\}$ with the extra constraints $s\leqslant \pi_{\ell'}=i+1$ in $\Psi(q,\x)_{F_\pi}$
(resp. $s \leqslant \pi'_{\ell'}=i$ in $\Psi(q,\x)_{F_{\pi'}}$). Hence the summands in the numerator of the factor $k=\ell'$ involving $y_s$ with $s\neq i,i+1$ are the 
same in $\Psi(q,\x)_{F_\pi}$ and $\Psi(q,\x)_{F_{\pi'}}$. The summands $s=i,i+1$ in the numerator of the factor $k=\ell'$ involving $y_i$ and $y_{i+1}$ are
\[
\begin{aligned}
	&y_i q^{\#\{t\in \{\pi_1,\ldots,i,\ldots,\pi_{\ell'-1}\} \mid t>i\}}(q-1) + y_{i+1} q^{\#\{t\in \{\pi_1,\ldots,i,\ldots,\pi_{\ell'-1}\} \mid t>i+1\}} 
	&& \text{for $\Psi(q,\x)_{F_\pi}$,}\\	
	&y_i q^{\#\{t\in \{\pi_1,\ldots,i+1,\ldots,\pi_{\ell'-1}\} \mid t>i\}} 
	&& \text{for $\Psi(q,\x)_{F_{\pi'}}$.}
\end{aligned}
\]
Note that 
\[
\begin{split}
	\#\{t\in \{\pi_1,\ldots,i+1,\ldots,\pi_{\ell'-1}\} \mid t>i\} &= \#\{t\in \{\pi_1,\ldots,i,\ldots,\pi_{\ell'-1}\} \mid t>i\} +1,\\
	\#\{t\in \{\pi_1,\ldots,i,\ldots,\pi_{\ell'-1}\} \mid t>i\} &= \#\{t\in \{\pi_1,\ldots,i,\ldots,\pi_{\ell'-1}\} \mid t>i+1\}.
\end{split}
\]
Hence the last two terms in $\Psi(q,\x)_{F_\pi}$ cancel when $y_i=y_{i+1}$ and the numerator for the factor $k=\ell'$ in $\Psi(q,\x)_{F_\pi}$ and 
$\Psi(q,\x)_{F_{\pi'}}$ agree.

For the factors $k>\ell'$, the factors in $\Psi(q,\x)_{F_\pi}$ and $\Psi(q,\x)_{F_{\pi'}}$ are the same.
\end{proof}

%%%%%%%%%%%%%%%%%%%%%%%%%%%%%%%%%%%%%%%%%%%%%%%%%%%%%%%%%%%
\subsection{Stationary distribution for permutations}
\label{ss:ss permutations}

In this section, we prove \cref{th:eigenvectorH}. By \cref{cor:lumping}, we have
\begin{equation}
\label{equation.Psi relation}
	 \Psi(q,\x)_{\pi}  = q^{\coinv(\pi)} \Psi(q,\x)_{F_\pi}
\end{equation}
for $q$ a prime power. Here $\Psi(q,\x)_{F_\pi}$ is given by the expression in \cref{theorem.sd flags}.

\begin{lemma}
\label{lemma.q prime power}
Denote by $\zeta(q,\x)_\pi$ the expression in the right hand side of~\eqref{eq:ssH}. Then
\[
	\Psi(q,\x)_{\pi} = \zeta(q,\x)_\pi
\]
for $q$ a prime power.
\end{lemma}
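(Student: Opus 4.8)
The starting point is the identity $\Psiqx_\pi = q^{\coinv(\pi)}\,\Psiqx_{F_\pi}$ recorded in \eqref{equation.Psi relation} (valid for $q$ a prime power), in which $\Psiqx_{F_\pi}$ is the explicit product of \cref{theorem.sd flags}. Both $\zeta(q,\x)_\pi$, as defined by the right-hand side of \eqref{eq:ssH}, and the normalized stationary distribution $\Psiqx_\pi$ of $\Tqx$ are invariant under rescaling $\x$, so we may assume $x_1+\cdots+x_n=1$ as in \cref{th:eigenvectorH}; then $\kappa(w_0;\x)=x_1+\cdots+x_n=1$ and the factor $1/(x_1+\cdots+x_n)$ appearing inside each $f(\pi_1,\dots,\pi_k)$ of \cref{theorem.sd flags} is trivial. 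Writing $\tilde f_k := (x_1+\cdots+x_n)\,f(\pi_1,\dots,\pi_k)$ and $D_k := \kappa(w_0;\x)-q^{k-n-1}\kappa(\pi_1,\dots,\pi_{k-1};\x)$, the plan is to (a) identify the $k$-th denominator factor of \cref{theorem.sd flags} with $D_k$; (b) show that the $k$-th numerator factor of \eqref{eq:ssH} equals $q^{n-k}\tilde f_k$; and (c) collect the powers of $q$ using $\inv(\pi)+\coinv(\pi)=\binom{n}{2}$.

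For (a): fix $1\le k\le n$ and list $\{\pi_1,\dots,\pi_{k-1}\}$ in decreasing order $c_1>c_2>\cdots>c_{k-1}$. Then $b_k(c_i)=i-1$, so by \eqref{eq:kappadef},
\[
\sum_{s\in\{\pi_1,\dots,\pi_{k-1}\}}\frac{x_s}{q^{n-s-b_k(s)}}=\sum_{i=1}^{k-1}x_{c_i}q^{c_i+i-1-n}=q^{k-n-1}\,\kappa(\pi_1,\dots,\pi_{k-1};\x),
\]
so the $k$-th denominator factor of \cref{theorem.sd flags} equals $D_k$. By \cref{rem:last factor} the last factor $f(\pi_1,\dots,\pi_n)/D_n$ of $\Psiqx_{F_\pi}$ equals $1$, so the product reduces to $1\le k\le n-1$, matching the ranges in \eqref{eq:ssH}.

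For (b): denote by $N_k$ the $k$-th factor in the numerator product of \eqref{eq:ssH}, i.e.\ $N_k=\kappa(\pi_k;\x)$ if $k\in\LRM(\pi)$ and $N_k=\kappa(\pi_{p_k},\dots,\pi_k;\x)-q^{-1}\kappa(\pi_{p_k},\dots,\pi_{k-1};\x)$ otherwise. If $k\in\LRM(\pi)$, the only $s\in\{\pi_1,\dots,\pi_k\}$ with $s\le\pi_k$ is $s=\pi_k$, and $b_k(\pi_k)=k-1$, so $\tilde f_k=x_{\pi_k}q^{\pi_k+k-1-n}$ and $q^{n-k}\tilde f_k=x_{\pi_k}q^{\pi_k-1}=\kappa(\pi_k;\x)=N_k$. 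If $k\notin\LRM(\pi)$, put $m:=k-p_k+1$ and write the subword $(\pi_{p_k},\dots,\pi_k)$ in decreasing order as $c_1>\cdots>c_m$ with $\pi_k=c_r$. Three observations drive the computation: (i) $\pi_1,\dots,\pi_{p_k-1}$ are all strictly larger than $\pi_k$, by minimality of $p_k$; (ii) the indices $s\in\{\pi_1,\dots,\pi_k\}$ with $s\le\pi_k$ are exactly $\pi_k,c_{r+1},\dots,c_m$; and (iii) $b_k(\pi_k)=p_k+r-2$ while $b_k(c_i)=p_k+i-3$ for $r<i\le m$. Feeding (ii)--(iii) into the definition of $\tilde f_k$ gives
\[
\tilde f_k=x_{\pi_k}q^{\pi_k+p_k+r-2-n}+(q-1)\sum_{i=r+1}^{m}x_{c_i}q^{c_i+p_k+i-3-n},
\]
while expanding $\kappa(\pi_{p_k},\dots,\pi_k;\x)-q^{-1}\kappa(\pi_{p_k},\dots,\pi_{k-1};\x)$ from \eqref{eq:kappadef} (minding that the two tuples have lengths $m$ and $m-1$), the contributions of $c_1,\dots,c_{r-1}$ cancel in pairs and one is left with $x_{\pi_k}q^{r+\pi_k-m-1}+(q-1)\sum_{i=r+1}^m x_{c_i}q^{i+c_i-m-2}$. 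Comparing exponents termwise and using $m=k-p_k+1$ shows each term here is $q^{n-k}$ times the corresponding term of $\tilde f_k$, so $N_k=q^{n-k}\tilde f_k$.

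For (c): multiplying over $k$ gives $\prod_{k=1}^{n-1}N_k=q^{\sum_{k=1}^{n-1}(n-k)}\prod_{k=1}^{n-1}\tilde f_k=q^{\binom{n}{2}}\prod_{k=1}^{n-1}\tilde f_k$. Since $\tilde f_k=f(\pi_1,\dots,\pi_k)$ under our normalization and $\coinv(\pi)-\binom{n}{2}=-\inv(\pi)$, combining (a) and (b) yields
\[
\Psiqx_\pi=q^{\coinv(\pi)}\prod_{k=1}^{n-1}\frac{f(\pi_1,\dots,\pi_k)}{D_k}=q^{-\inv(\pi)}\prod_{k=1}^{n-1}\frac{N_k}{D_k}=\zeta(q,\x)_\pi.
\]
I expect the main obstacle to be step (b) in the case $k\notin\LRM(\pi)$: one must carefully track the position of $\pi_k$ and of the entries below it within the sorted subword, compute the exponents $b_k$ without error, and reconcile the length-$m$ versus length-$(m-1)$ shift built into the definition of $\kappa$. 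The fact that, after the pairwise cancellations, every remaining exponent discrepancy collapses to the single uniform factor $q^{n-k}$ (the same value as in the trivial $\LRM$ case) is the heart of the argument and the step most prone to off-by-one slips.
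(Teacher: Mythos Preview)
Your proof is correct and follows essentially the same approach as the paper: both arguments match the denominator factors term-by-term, then show each numerator factor $N_k$ of \eqref{eq:ssH} equals $q^{n-k}$ times the corresponding $f(\pi_1,\dots,\pi_k)$, and finally collect the powers $\sum_{k=1}^{n-1}(n-k)=\binom{n}{2}$ together with $\inv(\pi)+\coinv(\pi)=\binom{n}{2}$ and \eqref{equation.Psi relation}. Your bookkeeping in the non-$\LRM$ case (introducing $m=k-p_k+1$, sorting the subword as $c_1>\cdots>c_m$ with $\pi_k=c_r$, and computing $b_k(\pi_k)=p_k+r-2$, $b_k(c_i)=p_k+i-3$) is a cleaner packaging of exactly the same exponent-matching the paper carries out with its notation $\sigma,\tau,a$.
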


\begin{proof}
By \cref{rem:last factor}, both $\zeta(q,\x)_\pi$ and $\Psi(q,\x)_{F_\pi}$ have $n-1$ factors. 
We begin by proving that the denominators match term-by-term. 
The $k$-th factor in the denominator for $\zeta(q,\x)_\pi$ and $\Psi(q,\x)_{F_\pi}$ are
\[
\kappa(w_0;\x)-q^{k-n-1} \kappa(\pi_1, \dots, \pi_{k-1};\x)
\]
and
\[
x_1 + \cdots + x_n - \sum_{s\in \{\pi_1,\ldots,\pi_{k-1}\}} \frac{x_s}{q^{n-s-b_{k}(s)}}
=
x_1 + \cdots + x_n - \sum_{i=1}^{k-1} \frac{x_{\pi_i}}{q^{n-\pi_i- \{j \in [k-1] \mid \pi_j > \pi_i\}}},
\]
respectively. By~\eqref{equation.kappa omega0}, $\kappa(w_0;\x)=x_1+\cdots+x_n$.
Now, let $\sigma = (\pi_1, \dots, \pi_{k-1})_>$. Then by~\eqref{eq:kappadef}
\[
\frac{\kappa(\pi_1, \dots, \pi_{k-1};\x)}{q^{n-k+1}} = 
\sum_{i=1}^{k-1} x_{\sigma_i} q^{i+\sigma_i-k-(n-k+1)}
= \sum_{i=1}^{k-1} \frac{x_{\sigma_i}}{q^{n+1-i-\sigma_i}}.
\]
Suppose $\pi_\ell = \sigma_i$. Then that means that there are $i-1$ numbers larger than $\pi_\ell$ in $\{\pi_1, \dots,
\pi_{k-1}\}$. Thus the summand on the right hand side of the above expression is equal to
\[
\frac{x_{\pi_\ell}}{q^{n - \pi_\ell - (i-1)}},
\]
which matches the $\ell$-th term in the denominator of $\Psi(q,\x)_{F_\pi}$. We have thus shown that the denominators are equal.

We now show that the numerators also match term-by-term in $\zeta(q,\x)_\pi$ and $\Psi(q,\x)_{F_\pi}$ up to a power of $q$. 
There are two cases. First, suppose $k \in \text{LRM}(\pi)$. The numerator of the $k$-th term in $\zeta(q,\x)_\pi$ is just  $\kappa(\pi_k;\x)
= x_{\pi_k} q^{\pi_k -1}$. The corresponding numerator in
$\Psi(q,\x)_{F_\pi}$ is $f(\pi_1, \dots, \pi_k) = x_{\pi_k} q^{-n + \pi_k + k - 1}$, since $b_k(\pi_k) = k-1$. Therefore, these factors match up to a factor of $q^{n-k}$. 

Finally, consider the $k$-th term in the numerators, where $k \notin \text{LRM}(\pi)$. Recall that $p_k$ is the notation for the leftmost position 
in $\pi$ whose value is smaller than that of $\pi_k$. Denote $(\pi_{p_k}, \dots, \pi_k)_> = \sigma$ and  $(\pi_{p_k}, \dots, \pi_{k-1})_> = \tau$.
As before, suppose $\pi_\ell = \sigma_i$.
First, look at $f(\pi_1, \dots, \pi_k)$ in the numerator of $\Psi(q,\x)_{F_\pi}$. Note that the only positions that will contribute to the summand 
are those between $p_k$ and $k$, which is a good sanity check.
If $\pi_\ell > \pi_k$, it will not appear in $f(\pi_1, \dots, \pi_k)$. If $\pi_\ell < \pi_k$, it will contribute a factor of
\begin{equation}
\label{eq: l lesser k}
\frac{x_{\pi_\ell} (q-1)}{q^{n-\pi_\ell- \{j \in [k-1] \mid \pi_j > \pi_\ell\}}}.
\end{equation}
Otherwise, $\pi_\ell = \pi_k$, which means $\ell = k$, and we get a factor of
\begin{equation}
\label{eq: l equal k}
\frac{x_{\pi_k}}{q^{n-\pi_k- \{j \in [k-1] \mid \pi_j > \pi_k\}}}.
\end{equation}

Suppose $a$ elements in $\{\pi_1, \dots, \pi_k\}$ are larger than $\pi_k$. Then
\[
	\sigma = (\sigma_1, \dots, \sigma_a, \pi_k, \allowbreak \dots) \quad \text{and} \quad
	\tau = (\sigma_1, \dots, \sigma_a, \widehat{\pi_k}, \dots),
\]
where the hat denotes the absence of the number.
In the $k$-th term of $\zeta(q,\x)_\pi$, we therefore obtain
\[
\kappa(\pi_{p_k}, \dots, \pi_k;\x) 
- q^{-1} \kappa(\pi_{p_k}, \dots, \pi_{k-1};\x) =
\sum_{b = 1}^k x_{\sigma_b} q^{-(k - p_k + 2 - b - \sigma_b)}
 - \sum_{b = 1}^{k-1} x_{\tau_b} q^{-(k - p_k + 1 - b - \tau_b) - 1}.
\]
But $\sigma_b = \tau_b$ for $1 \leqslant b \leqslant a$ and the power of $q$ also matches exactly. Therefore $x_{\pi_\ell}$ does not appear in 
$\zeta(q,\x)_\pi$ if $\pi_\ell > \pi_k$.
If $\ell = k$ and thus $\pi_\ell = \pi_k$, then it does not appear in $\tau$ and so we get a contribution of
\[
x_{\pi_k} q^{-k + p_k - 1 + a + \pi_k},
\]
which matches \eqref{eq: l equal k} again up to a factor of $q^{n-k}$. Lastly, if $\pi_\ell < \pi_k$ and we have assumed $\pi_\ell = \sigma_i$, then $\pi_\ell = \tau_{i-1}$. The contribution of this term is
\[
x_{\pi_\ell} q^{-k + p_k - 2 + i + \pi_\ell}
- x_{\pi_\ell} q^{-k + p_k - 2 + (i-1) + \pi_\ell}
=  x_{\pi_\ell} (q-1) q^{-k + p_k - 3 + i + \pi_\ell}.
\]
By comparison, the factor in \eqref{eq: l lesser k} is
\[
 x_{\pi_\ell} (q-1) q^{-n + \pi_\ell + (i - 2 + p_k - 1)},
\]
because we cannot include $\pi_k$. Thus, this term also matches up to the same overall factor of $q^{n-k}$.

In summary, so far we have proven that
\[
	\zeta(q,\x)_\pi = q^{\sum_{k=1}^{n-1}(n-k) - \inv(\pi)} \Psi(q,\x)_{F_\pi} = q^{\binom{n}{2} - \inv(\pi)} \Psi(q,\x)_{F_\pi}.
\]
The lemma now follows from~\eqref{equation.Psi relation} by observing that $\inv(\pi) + \coinv(\pi) = \binom{n}{2}$, namely
\[
	 \Psi(q,\x)_{\pi}  = q^{\coinv(\pi)} \Psi(q,\x)_{F_\pi} = q^{\coinv(\pi) + \inv(\pi) - \binom{n}{2}} \zeta(q,\x)_\pi = \zeta(q,\x)_\pi.
\]
\end{proof}

Next we extend our results to all $q$.

\begin{lemma}
\label{lemma.all q}
Denote by $\zeta(q,\x)_\pi$ the expression in the right hand side of~\eqref{eq:ssH}. Then 
\[
	\Psi(q,\x)_{\pi} = \zeta(q,\x)_\pi
\]
for all $q$ with $0<q^{-1}\leqslant 1$. 
\end{lemma}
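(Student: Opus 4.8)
The plan is to promote \cref{lemma.q prime power} from prime powers to all admissible $q$ by the same ``a rational function is determined by infinitely many of its values'' principle that underlies the proof of \cref{thm:evalues Sn2}. Throughout, fix a tuple of weights $\x$ with $x_i>0$ and $x_1+\cdots+x_n=1$.

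First I would check that $\Psi(q,\x)_\pi$ is the restriction to $q\ge 1$ of a rational function of $q$. For every real $q\ge 1$ the chain $\Tqx$ is irreducible, as recorded in \cref{section.stationary_perms}, so the eigenvalue $x_1+\cdots+x_n=\lambda_{[n]}(q,\x)$ is simple (consistent with the multiplicity $d_0=1$ in \cref{thm:evalues Sn}), and the stationary row vector $\Psi(q,\x)$ spanning its left eigenspace is unique once normalized by $\sum_\pi\Psi(q,\x)_\pi=1$. The matrix $M(q):=\Tqx-(x_1+\cdots+x_n)\,{\sf Id}_{n!\times n!}$ has entries in $\C(q)$, and on a Zariski-dense set of $q$-values the linear system obtained from $v\,M(q)=0$ together with $\sum_\pi v_\pi=1$ is invertible; Cramer's rule then expresses each $\Psi(q,\x)_\pi$ as a ratio of polynomials in $q$, giving a rational function $R_\pi(q)\in\C(q)$ with $\Psi(q,\x)_\pi=R_\pi(q)$ on that dense set. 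Since $\Psi(q,\x)_\pi$ is a probability (hence finite) for each $q\ge 1$, $R_\pi$ has no pole on $[1,\infty)$; and since the unique stationary distribution of an irreducible finite chain depends continuously on its transition matrix, $\Psi(q,\x)_\pi$ is continuous in $q$ on $[1,\infty)$, so in fact $\Psi(q,\x)_\pi=R_\pi(q)$ for all $q\ge 1$.

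Next, $\zeta(q,\x)_\pi$ is manifestly an element of $\C(q)$. By \cref{lemma.q prime power}, $R_\pi(q)=\Psi(q,\x)_\pi=\zeta(q,\x)_\pi$ for every prime power $q$. Because the prime powers form an infinite set and a rational function of one variable is determined by its values on any infinite set, $R_\pi=\zeta(q,\x)_\pi$ as elements of $\C(q)$. In particular $R_\pi$ has no pole on $[1,\infty)$, so $\zeta(q,\x)_\pi$ is finite there (which can also be seen directly: the $k$-th denominator factor $\kappa(w_0;\x)-q^{k-n-1}\kappa(\pi_1,\dots,\pi_{k-1};\x)$ is at least $x_{\pi_k}+\cdots+x_{\pi_n}>0$ for $q\ge 1$, using \eqref{equation.kappa omega0}), and we conclude $\Psi(q,\x)_\pi=R_\pi(q)=\zeta(q,\x)_\pi$ for all $q$ with $0<q^{-1}\le 1$.

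The step I expect to be the main obstacle is the first one: one must argue carefully that $\Psi(q,\x)_\pi$, introduced purely as the limiting distribution of a Markov chain, genuinely varies rationally (and continuously) in $q$, rather than being an a priori unrelated number at each $q$. This is precisely what simplicity of the top eigenvalue together with a Cramer's-rule/adjugate argument provides; everything downstream is the routine rational-interpolation argument already deployed for the characteristic polynomial in \cref{ss:eigenvalue perm}.
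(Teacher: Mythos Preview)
Your argument is correct and follows essentially the same strategy as the paper: both sides are rational functions of $q$, they agree at all prime powers by \cref{lemma.q prime power}, hence they agree identically. The only difference is that where you justify rationality of $\Psi(q,\x)_\pi$ via a Cramer's-rule/adjugate argument on the linear system $v\,M(q)=0$, $\sum_\pi v_\pi=1$, the paper instead invokes the Markov chain tree theorem, which directly exhibits each stationary probability as a ratio of polynomials in the transition weights and thereby sidesteps the continuity and pole-avoidance checks you carry out.
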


\begin{proof}
The expression of $\zeta(q,\x)_\pi$ as the right hand side of~\eqref{eq:ssH} is rational in $q$ and hence also in $q^{-1}$. This also follows from the
Markov chain tree theorem~\cite{leighton-rivest-1983,anantharam-tsoucas-1989}. Since \cref{lemma.q prime power} establishes
the result for infinitely many specializations of $q$, this proves the claim.
\end{proof}

\begin{proof}[Proof of \cref{th:eigenvectorH}]
The proof of the first part follows directly from \cref{lemma.all q}.

For the second part, we first consider the denominator. The $k$-th factor is 
\[
\kappa(w_0;\x)-q^{k-n-1} \kappa(\pi_1, \dots, \pi_{k-1};\x) 
= x_1 + \cdots + x_n -q^{k-n-1} \kappa(\pi_1, \dots, \pi_{k-1};\x),
\]
by \eqref{equation.kappa omega0}.
Let $\pi_> = (\pi_1, \dots, \pi_{k-1})_>$ for simplicity of notation.
Let us compare the coefficient of $x_i$ for $i \in [n]$. 
If $i$ does not appear in $\pi_>$, then the coefficient is clearly $1$.
Now consider the coefficient of $x_j$ when $j = (\pi_>)_1$, which is 
$1 - q^{k-n-1} \times q^{1 + j - k - 1}$ by \eqref{eq:kappadef}. 
Simplifying the power of $q$, we get $1 + j - n - 2 \leqslant -1$ and hence the coefficient is positive.

Now, consider the numerator. The nontrivial factors are 
\[
\kappa(\pi_{p_k}, \dots, \pi_k;\x) 
- q^{-1} \kappa(\pi_{p_k}, \dots, \pi_{k-1};\x)
\]
whenever $k \notin \LRM(w)$ and $\pi_{p_k} < \pi_k$. Again, we need to compare the coefficient of $x_{\pi_i}$ for $i \in [p_k, k-1]$. Notice that
\[
(\pi_{p_k}, \dots, \pi_k)_> = ( \dots, \pi_k, \dots, \pi_{p_k}, \dots)
\quad
\text{and}
\quad
(\pi_{p_k}, \dots, \pi_{k-1})_> = ( \dots, \widehat{\pi_k}, \dots, \pi_{p_k}, \dots),
\]
where the hat denotes the absence of the letter. Thus, for any $i$ for which $\pi_i > \pi_k$ at position $j$ in $(\pi_{p_k}, \dots, \pi_k)_>$, the coefficient is
$x_{\pi_i} (q^{\pi_i+j-k-1} - q^{-1} q^{\pi_i+j-(k-1)-1}) = 0$.
Similarly, for any $i$ for which $\pi_i < \pi_k$ at position $j$ in $(\pi_{p_k}, \dots, \pi_k)_>$, the coefficient is
$x_{\pi_i} (q^{\pi_i+j-k-1} - q^{-1} q^{\pi_i+(j-1)-(k-1)-1})$, which is positive. From \eqref{equation.Psi relation}, \cref{lemma.q prime power} and
the fact that the entries of the stationary distribution in \cref{theorem.sd flags} sum to it, it directly follows that $\sum_{\pi \in S_n} \Psi(q,\x)_{\pi} = 1$.
\end{proof}

%%%%%%%%%%%%%%%%%%%%%%%%%%%%%%%%%%%%%%%%%%%%%%%%%%%%%%%%%%%
\subsection{Stationary distribution for words}
\label{ss:ss words}

By~\eqref{eq:permlumping} we have
\[
	 \Psi(q,\xbar)_w = \sum_{\substack{\pi \in \symm_n\\ \proj_{W_\m}(\pi) = w}} \Psi(q,\x)_\pi,
\]
where $\x$ and $\xbar$ are related as in \cref{def:projandinclforwords}.

\begin{lemma}
\label{lemma.x compatible}
When $\x$ is $\m$-compatible, we have
\[
	q^{\inv(\pi)} \Psi(q,\x)_\pi = q^{\inv(\pi')} \Psi(q,\x)_{\pi'}
\]
for all $\pi,\pi'\in \symm_n$ such that $\proj_{W_\m}(\pi)=\proj_{W_\m}(\pi')$.
\end{lemma}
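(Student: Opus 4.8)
The idea is to reduce the claim to an iterated application of \cref{cor:y equal}. Since $\proj_{W_\m}(\pi)=\proj_{W_\m}(\pi')$ means exactly that $\pi$ and $\pi'$ lie in the same coset of the Young subgroup $\symm_\m$, i.e.\ $\pi' = \tau\cdot\pi$ for some $\tau\in\symm_\m$, and $\symm_\m$ is generated by the adjacent transpositions $s_i$ with $i,i+1$ in the same block $M_k$, it suffices to prove the statement when $\pi' = s_i\cdot\pi$ for a single such $i$. First I would record this reduction: write $\tau$ as a product of such generators $s_{i_1}\cdots s_{i_r}$, and observe that each intermediate permutation $s_{i_j}\cdots s_{i_r}\cdot\pi$ still destandardizes to the same word $w$ (multiplying on the left by $s_i$ with $w_i=w_{i+1}$ does not change the underlying word), so the block-interchanging hypothesis is preserved along the chain. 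Thus the general case telescopes from the single-transposition case.

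For the single-transposition case, $\pi'=s_i\cdot\pi$ with $i,i+1\in M_k$ for some $k$. $\m$-compatibility gives precisely $y_i=y_{i+1}$ (the $\y$-coordinates are constant on each $M_k$), which is the hypothesis of \cref{cor:y equal}; hence
\[
\Psi(q,\x)_{F_\pi} = \Psi(q,\x)_{F_{\pi'}}.
\]
By \cref{cor:lumping}, $\Psi(q,\x)_\pi = q^{\coinv(\pi)}\Psi(q,\x)_{F_\pi}$ (for $q$ a prime power, and then for all $q$ by the rationality/polynomial-identity argument of \cref{lemma.all q}, since $\Psi(q,\x)_\pi$ is a rational function of $q$), and similarly for $\pi'$. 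Therefore
\[
q^{\inv(\pi)}\Psi(q,\x)_\pi = q^{\inv(\pi)+\coinv(\pi)}\Psi(q,\x)_{F_\pi} = q^{\binom{n}{2}}\Psi(q,\x)_{F_\pi},
\]
and the same computation for $\pi'$ gives $q^{\inv(\pi')}\Psi(q,\x)_{\pi'} = q^{\binom{n}{2}}\Psi(q,\x)_{F_{\pi'}}$. Since the two right-hand sides are equal by \cref{cor:y equal}, we conclude $q^{\inv(\pi)}\Psi(q,\x)_\pi = q^{\inv(\pi')}\Psi(q,\x)_{\pi'}$, and the reduction above upgrades this to arbitrary $\pi,\pi'$ with $\proj_{W_\m}(\pi)=\proj_{W_\m}(\pi')$.

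The only real subtlety — and the step I'd be most careful about — is the bookkeeping in the telescoping reduction: one must check that the relevant adjacent transpositions stay \emph{within} a block of equal letters of $w$ at every stage (so that \cref{cor:y equal} applies and the destandardization is unchanged), and that $\inv(\pi)+\coinv(\pi)=\binom{n}{2}$ is used to absorb both the $\inv$ prefactor and the $\coinv$ from lumping into a symmetric constant $q^{\binom n2}$ that cancels. Everything else is a direct combination of \cref{cor:lumping,cor:y equal}.
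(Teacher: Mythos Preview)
Your proof is correct and follows essentially the same approach as the paper: the paper's one-line proof simply cites \cref{cor:y equal} together with the relation $\Psi(q,\x)_\pi = q^{\coinv(\pi)}\Psi(q,\x)_{F_\pi}$ (equation \eqref{equation.Psi relation}), and your argument is a faithful unpacking of that, including the reduction to a single generator $s_i$ of $\symm_\m$ and the $\inv+\coinv=\binom{n}{2}$ identity. The telescoping concern you flag is not an issue, since any product of generators $s_i$ with $i,i+1$ in the same block $M_k$ remains in $\symm_\m$ at every stage, so the hypothesis $y_i=y_{i+1}$ is available at each step automatically.
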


\begin{proof}
The assertion follows from \cref{cor:y equal} and~\eqref{equation.Psi relation}.
\end{proof}

\begin{proof}[Proof of \cref{th:steady state words}]
Note that
\[
	\sum_{\sigma \in \symm_n} q^{\inv(\sigma)} = [n]_q!.
\]
Hence
\[
	\sum_{\substack{\pi \in \symm_n\\ \proj_{W_\m}(\pi) = w}} q^{-\inv(\pi)} = q^{-\inv(w)} \prod_{i=1}^\ell [m_i]_{q^{-1}}!
	= q^{-\inv(w)} \prod_{i=1}^\ell q^{-m_i+1} [m_i]_q! .
\]
Therefore by \cref{lemma.x compatible}, it suffices to show that the expression for $q^{\inv(\pi)} \Psi(q,\x)_\pi$ in \cref{th:eigenvectorH} and
$q^{\inv(w)} \Psi(q,\xbar)_w$ in \cref{th:steady state words} are the same for a given $\pi \in \symm_n$ with 
$\proj_{W_\m}(\pi) = w$. Note that the expressions are exactly the same up to the definitions of $\kappa(\bv;\x)$ and
$\kappa(\bv;\xbar)$. However,
\[
	\kappa(\bv;\xbar) = \sum_{i=1}^k \frac{q^{i+m_1+\cdots+m_{b_i}-k-1}}{[m_{b_i}]_q} \xo_{b_i} = \sum_{i=1}^k q^{n+i-k-1} \yo_{b_i}
\]
since $\yo_j = \frac{\xo_j}{q^{n-n_j}[m_j]_q}$ and
\[
	\kappa(\bv;\x) = \sum_{i=1}^k q^{i+b_i-k-1} x_{b_i} = \sum_{i=1}^k q^{n+i-k-1} y_{b_i}.
\]
Hence, when $\x$ and $\xbar$ are $\m$-compatible, that is, $y_{n_{j-1}+i} = \yo_{n_j}$ for all $1\leqslant i \leqslant m_j$, the first part holds. 

The argument for the second part is identical to that in the proof of 
\cref{th:eigenvectorH}(2) and is omitted. 
The third part follows because the entries
in $\Psi(q,\xbar)_w$ are obtained by summing those in $\Psi(q,\x)_\pi$ using \eqref{eq:permlumping}, which themselves sum to $1$.
\end{proof}

%%%%%%%%%%%%%%%%%%%%%%%%%%%%%%%%%%%%%%%%%%%%%%%%%%%%%%%%%%%
\section{Convergence to stationarity}
\label{section.convergence}
%%%%%%%%%%%%%%%%%%%%%%%%%%%%%%%%%%%%%%%%%%%%%%%%%%%%%%%%%%%

In this section, we prove \cref{cor:relax} and \cref{theorem.mixing} for the mixing time of the $q$-Tsetlin library on flags and permutations.

\begin{proof}[Proof of~\cref{cor:relax}]
Since $d_1 = 0$, there is no eigenvalue $\lambda_S$ with $|S| = n - 1$. With a little bit of work and using $x_i>0$ and $\sum_i x_i=1$, 
one can see that $\lambda_S \leqslant \lambda^*$ for $|S| < n-2$. In the case of $x_i=q^{n-i}/[n]_q$, it is easily seen from the formula 
given in \cref{thm:evalues Sn} that the eigenvalue $\lambda_S$ only depends on the size of $S$ but not on the specific choice of $S$, 
and can be computed as a simple geometric sum. For $x_i=1/n$, the subset maximizing $\lambda_S$ is given by $S=\{n,\ldots,3\}$ 
giving rise to the stated form of $\lambda^*$.
\end{proof}

Recall that a \emph{strong stationary time} for a finite state Markov chain 
is a random time $\tau$ such that the state at time $\tau$
is exactly distributed according to the chain’s stationary distribution regardless of the starting state. It is a standard fact that $\tau$ is a 
stopping time.

\begin{proof}[Proof of \cref{theorem.mixing}]
We will first focus on the $q$-Tsetlin library on flags.
For such Markov chains associated to left regular bands,
Brown and Diaconis~\cite{BD.1998}~\cite[Theorem 0]{Brown} showed that 
the first worst-case time that the walk hits the minimal ideal, denoted $\tau$, is a strong stationary time. 
Further, they showed that $d(t) \leqslant \P[\tau > t]$, where $d(t)$ is defined in \eqref{eq:dtdef}. By Markov's inequality
\[
	d(t) \leqslant \P[\tau > t] \leqslant \frac{\E[\tau]}{t+1},
\]
where $\E[\tau]$ is the expected value for $\tau$.
Let $\E_F$ be the expected time to reach the flag $F$ in the left regular band.
By~\cite[Theorem 2.5]{RhodesSchilling.2022}, we have
\[
\E_{F_\pi} = \sum_{i=1}^n x_i \frac{\partial}{\partial x_i} \ln \Psi(q,\x)_{F_\pi}
\]
if $\Psi(q,\x)_{F_\pi}$ is computed via paths in expansions of the right Cayley graph as in~\cite{RhodesSchilling.2019}---that is,
as in~\eqref{equation.for proof}. We compute
\[
	\E_{F_{\pi}}
	=1 + \sum_{k=1}^{n-1} \frac{1}{1-\sum_{s\in\{\pi_1,\ldots,\pi_k\}} \frac{x_s}{q^{n-s-b_{k+1}(s)}}}.
\]
Evaluating $\E_{F_\pi}$ at $x_i=p^{n-i}/[n]_p$ for $p\geqslant 1$, we obtain
\begin{equation}
\label{equation.E}
	\E_{F_\pi} =1 + \sum_{k=1}^{n-1} \frac{1}{1-\sum_{s\in\{\pi_1,\ldots,\pi_k\}} \frac{p^{n-s}}{[n]_p q^{n-s-b_{k+1}(s)}}},
\end{equation}
and recall that $b_k(s) = \# \{ t\in \{\pi_1,\ldots,\pi_{k-1}\} \mid t>s \}$. 

\medskip

For $q=p>1$, we find from \eqref{equation.E} that
\[
	\E_{F_\pi} =1 + \sum_{k=1}^{n-1} \frac{[n]_q}{[n]_q-\sum_{s\in\{\pi_1,\ldots,\pi_k\}} q^{b_{k+1}(s)}} = \E_{F_{12\ldots n}},
\]
because $\sum_{s\in\{\pi_1,\ldots,\pi_k\}} q^{b_{k+1}(s)}=[k]_q$, which is independent of $\pi$. We thus find
\[
	\E_{F_{12\ldots n}} = 1+ \sum_{k=1}^{n-1} \frac{[n]_q}{[n]_q-[k]_q} = \sum_{k=0}^{n-1} \frac{[n]_q}{[n]_q-[k]_q}
	= \frac{[n]_q}{q^{n-1}} \sum_{k=1}^{n} \frac{1}{[k]_{q^{-1}}} = [n]_{q^{-1}} \sum_{k=1}^{n} \frac{1}{[k]_{q^{-1}}},
\]
where we used $[n]_{q^{-1}} = q^{-n+1} [n]_q$. Furthermore for $q>1$ and $1\leqslant k \leqslant n$, we have
\[
	\frac{1}{1-q^{-n}} \leqslant \frac{1}{1-q^{-k}} \leqslant \frac{1}{1-q^{-1}}
\]
and therefore
\[
	n \leqslant \E_{F_{12\ldots n}} \leqslant n [n]_{q^{-1}} \leqslant n \frac{q}{q-1}.
\]
This proves that $\E[\tau]$ is of order $\Theta(n)$ for $q=p>1$. 

\medskip

For $p,q > 1$, $p\neq q$ we also have that $\E[\tau]$ is of order $\Theta(n)$. 
To prove this we first consider $p>q>1$. In this case the denominator in \eqref{equation.E} for given $k$ is smallest if the powers 
of $p/q$ are largest, which is the case when $s \in \{1,\ldots,k\}$ for which $b_{k+1}(s)=k-s$. It follows that the denominator is smallest for 
$\pi_1\ldots\pi_k=12\ldots k$, and hence
\[
\E_{F_\pi}\leqslant \E_{F_{12\ldots n}}=1+\sum_{k=1}^{n-1} \frac{[n]_p}{[n]_p - (p/q)^{n-k} [k]_p}
= 1+\sum_{k=1}^{n-1} \frac{[n]_{p^{-1}}}{[n]_{p^{-1}} - q^{-n+k} [k]_{p^{-1}}}=\Theta(n). \]
For $q>p>1$ we find
\[
\E_{F_\pi}\leqslant \E_{F_{n(n-1)\ldots 1}}=1+\sum_{k=1}^{n-1} \frac{[n]_p}{[n]_p - [k]_p} =\Theta(n), \]
by the calculation above. 

\medskip

For $q\geqslant 1>p>0$ the denominator in \eqref{equation.E} for given $k$ is smallest when the powers of $p/q$ are smallest, 
which is the case when $s \in \{n-k+1,\ldots,n\}$ for which $b_{k+1}(s)=n-s$. The denominator thus is smallest for $\pi_1\ldots\pi_k=n(n-1)\ldots (n-k+1)$ 
and hence $\E_{F_\pi}\leqslant \E_{F_{n(n-1)\ldots 1}}$. For $p<1$ this is $\Theta(p^{-n})$.

\medskip

For $p>q=1$, we obtain
\begin{align*}
	\E_{F_\pi} &=1 + \sum_{k=1}^{n-1} \frac{1}{1-\sum_{s\in\{\pi_1,\ldots,\pi_k\}} \frac{p^{n-s}}{[n]_p}} \leqslant \E_{F_{12\ldots n}} 
	= 1 + \sum_{k=1}^{n-1} \frac{1}{1-\sum_{s=1}^k \frac{p^{n-s}}{[n]_p}}  \\
	&= \sum_{k=1}^n \frac{[n]_p}{[k]_p} = \sum_{k=1}^n p^{n-k} \frac{[n]_{p^{-1}}}{[k]_{p^{-1}}} \leqslant [n]_p[n]_{p^{-1}} = \Theta(p^n).
\end{align*}
Hence $\E[\tau]$ is of order $\Theta(p^n)$ for $p>q=1$.

\medskip

For $q>p=1$ we obtain
\[
	\E_{F_\pi} =1 + \sum_{k=1}^{n-1} \frac{n}{n-\sum_{s\in\{\pi_1,\ldots,\pi_k\}} q^{-n+s+b_{k+1}(s)} } \leqslant \E_{F_{n(n-1)\ldots 1}}= 1 + \sum_{k=1}^{n-1} \frac{n}{n-k} 
	= \Theta(n\log n).
\]
This bound is sharp and hence $\E[\tau]=\Theta(n\log n)$ for $q>p=1$.

\medskip

For $q=p=1$, we have $x_i=1/n$ and the mixing time is $\Theta(n \log n)$ by~\cite[Theorem 1.1, Remark 1]{diaconisICM}. 
This proves  \cref{theorem.mixing} in the case of flags.

\medskip

As shown in \cref{subsection.diagramconsequences}, the $q$-Tsetlin library on permutations is obtained by lumping or projecting the $q$-Tsetlin library 
on flags. Furthermore, the stationary distributions for all flags associated to the same permutation are equal. Therefore the time taken to reach the 
minimal ideal after projection cannot be larger than $\tau$, proving the result in this case.
\end{proof}

%%%%%%%%%%%%%%%%%%%%%%%%%%%%%%%%%%%%%%%%%%%%%%%%%%%%%%%%%
\bibliography{Library}{}
\bibliographystyle{alpha}

\end{document}